\newcommand{\bx}{\bm{x}}
\newcommand{\bu}{\bm{u}}
\newcommand{\bw}{\bm{w}}
\newcommand{\bz}{\bm{z}}
\newcommand{\bv}{\bm{v}}
\newcommand{\bp}{\bm{p}}
\newcommand{\be}{\bm{e}}
\newcommand{\dtmpc}{SDD-TMPC}
\newcommand\norm[1]{\left\lVert#1\right\rVert}
\newcommand{\Ts}{T^{\textrm{s}}}
\newcommand{\we}{w^{\text{e}}} 
\newcommand{\wpp}[1]{w^{\text{e,}#1}_t} 
\newcommand{\maxu}{^{\max}}
\newtheorem{my_def}{Definition}
\title{State-Dependent Dynamic Tube MPC: A Novel Tube MPC Method with a Fuzzy Model of Disturbances}
\author[1\authfn{1}]{Filip Surma MSc}
\author[1\authfn{1}]{Anahita Jamshidnejad PhD}
\affil[1]{Aerospace Engineering Faculty, TU Delft, 2629 HS, Delft, South Holland, The Netherlands}
\runningauthor{Surma and Jamshidnejad}
\begin{document}

\begin{frontmatter}
\maketitle

\begin{abstract}
Most real-world systems are affected by external disturbances, which may 
be impossible or costly to measure. 
For instance, when autonomous robots move in dusty environments, the perception of their sensors is disturbed.  
{Moreover, uneven terrains can cause ground robots to deviate from their planned trajectories.} 
Thus, learning the external disturbances and incorporating this knowledge into the future predictions  
in decision-making can significantly contribute to improved performance. 
Our core idea is to learn the external disturbances that vary with the states of the system,  
and to incorporate this knowledge into a novel formulation 
for robust tube model predictive control (TMPC). 
Robust TMPC provides robustness to bounded disturbances 
considering the known (fixed) upper bound of the disturbances,   
but it does not consider the dynamics of the disturbances. This can lead to highly conservative solutions. 
We propose a new dynamic version of robust TMPC (with proven robust stability), called state-dependent dynamic TMPC (\dtmpc), 
which incorporates the dynamics of the disturbances  
into the decision-making of TMPC. 
In order to learn the dynamics of the disturbances as a function of the system states, 
a fuzzy model is proposed. 
We compare the performance of \dtmpc, MPC, and TMPC via simulations,   
in designed search-and-rescue scenarios. 
The results show that, while remaining robust to bounded external disturbances,  \dtmpc\ 
generates less conservative solutions and remains feasible in more cases, compared to TMPC. 

\keywords{Robust tube model predictive control, Fuzzy-logic-based modeling, State-dependent disturbances}
\end{abstract}
\end{frontmatter}
\section{Introduction}
\label{sec:intro}
Model predictive control (MPC) \cite{MPCbook,MPC_industry_survey} 
is a state-of-the-art control approach that can optimize 
multiple control objectives, handle state and input constraints, and provide guarantees on 
stability and feasibility. MPC heavily relies on a model that predicts the evolution of the states 
of the controlled system 
across a given prediction horizon. Thus, unmodeled external disturbances 
may lead to dangerous situations in real-life applications. 
For instance, search-and-rescue (SaR) robots should operate autonomously 
in unknown environments that are prone to external disturbances 
\cite{USAR_survey_2013,USAR_control_and_perception_2020}. 
Therefore, for mission planning SaR robots need control methods  
that next to optimize the objectives of the SaR mission (e.g., maximizing 
the coverage of the area in the smallest possible time) 
and satisfying the constraints, 
also, deal with external disturbances that pose risks to the mission.
Thus, MPC methods that can systematically handle the disturbances are promising for SaR robots.%

Various extensions to standard MPC, including robust and stochastic MPC, have been developed \cite{SMPCsurvey}. 
In this paper, we focus on robust tube MPC (TMPC) \cite{TMPC}, which provides robustness 
to bounded disturbances{,} without significantly increasing the online computation time. 
%
%
%
%
TMPC generates control inputs that are composed of two parts: a nominal and an ancillary control input. 
The nominal states and control inputs of the system are determined by solving 
the nominal version of the MPC optimization (obtained by excluding external disturbances). 
The nominal states  determine the centroids of a tube (i.e., a set of possible future states given chosen inputs and bounded disturbances)  
that propagates across the prediction horizon. 
The cross sections of the tube lie within the state space 
of the system, and as long as the realized states (i.e., the states in {the} presence of the external disturbances)
at the corresponding time step remain in these cross sections, robustness
 is guaranteed. 
The ancillary controller ensures that the realized states remain inside the tube.%

\begin{figure}
    \centering
    \includegraphics[width=1\textwidth]{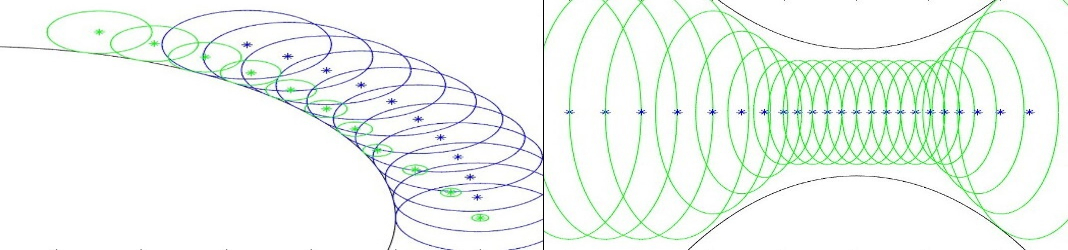}
    \caption{Comparison of \dtmpc\ and TMPC, where the sequences of the circular areas 
    in both plots illustrate the evolution of the tube based on a prediction at the current time step. 
    \textbf{Left plot:} The tube in \dtmpc\ (shown via the smaller varying green circles) is dynamic, 
    and always a subset of the tube of regular TMPC (shown via the larger blue circles). 
    Thus, \dtmpc\ allows the system to perform closer to the boundaries of hard constraints, 
    and to potentially improve the performance, without violating these constraints. 
    \textbf{Right plot:} The nominal trajectory (shown via the blue stars) and the tube of \dtmpc\ 
    (shown via the green varying circles) are illustrated when the states 
    are constrained to remain within a set shown by the black boundary curves. \dtmpc\ 
    allows, e.g., by reducing the speed of a robot, to formulate a feasible problem and provide solutions for it, 
    and to safely move through the narrow corridor.
}
    \label{fig:illustration}
\end{figure}

The nominal MPC in TMPC is not aware of the dynamics of the external disturbances. 
Thus, while the simplicity of the nominal optimization problem leads to a lower
computation time, this may be at the cost of compromising the performance 
and generating overly conservative solutions, especially for systems with 
nonlinear dynamics and prone to dynamic disturbances. 
Moreover, existing MPC methods, including TMPC, do not naturally allow for incorporation of 
the expert knowledge. This gap should be addressed, particularly when extensive expert knowledge is 
available, 
but humans cannot or should not participate in real-time control procedures,  
for instance for autonomous onboard control of SaR robots.%

%
%
%
Correspondingly, the main contributions of this paper are:
\begin{itemize}
    \item 
    We introduce \dtmpc, a novel extension to TMPC. 
    \dtmpc\ incorporates the dynamics of the external 
    disturbances into its nominal MPC formulation and generates tubes that are subsets of the tube of TMPC. 
    \dtmpc\ reduces the conservatives of the solutions  (see Figure~\ref{fig:illustration}) 
    and results in improved performance, with affordable  
    computational efforts. We also prove the robust stability of \dtmpc.
    \item 
    We propose a novel way to learn the dynamics of 
    disturbances, initiated by the existing expert knowledge,  
    and using a fuzzy inference system. While other modeling approaches, e.g., 
    neural networks, require an extensive data set to train the initial model, 
    a fuzzy inference system {(FIS)} can easily be initiated with human  
    knowledge and then be tuned online to adapt to the changing environment. 
    \item 
    We compare the performance of \dtmpc\ with regular MPC and TMPC for a linear system with state-dependent external disturbances that correspond to an autonomous robot navigating in an unknown environment. The results confirm that 
    \dtmpc\ outperforms both regular MPC and TMPC 
    by generating less conservative control inputs that still guarantee the satisfaction 
    of the hard constraints.
\end{itemize}
The rest of the paper is structured as follows. 
In Section~\ref{sec:background}, we discuss the related work 
and identify the open challenges and limitations of the state-of-the-art methods. 
In Section~\ref{sec:methodologies}, we explain our proposed approaches, 
including \dtmpc, 
and the fuzzy model of disturbances.
We also provide proof for the stability of \dtmpc. 
In Section~\ref{sec:case_study}, we implement and compare the performance of MPC, TMPC and \dtmpc\ 
via computer simulations for a ground 
robot. 
Finally, Section~\ref{sec:conclusion} concludes the paper and suggests topics for future research. 
The abbreviations and mathematical notations that are frequently used in this paper are listed in Table \ref{tab:abbreviations} and Table \ref{tab:math}, respectively.

\begin{table}[]\caption{Frequently used abbreviations and their explanation}
\label{tab:abbreviations}\begin{tabular}{|l|l|l|l|}
\hline
\multicolumn{1}{|l|}{Abbreviation} & \multicolumn{1}{l|}{Explanation} & \multicolumn{1}{l|}{Abbreviation} & \multicolumn{1}{l|}{Explanation}                  \\ \hline
A$i$                               & The $i^\text{th}$ assumption         &  SaR                               & Search and Rescue                           \\
FIS                                & Fuzzy inference system               &   SDD-TMPC                          & State-dependent dynamic tube model predictive control\\
GA                                 & Genetic algorithm                    &   TSK-FIS                           & Takagi-Sugeno-Kang fuzzy inference system          \\
MPC                                & Model predictive control             &  TMPC                              & Tube model predictive control                      \\
PS                                 & Particle swarm algorithm             & ZOH                              & Zero-order hold           \\
\hline
\end{tabular}
\end{table}

\begin{table}[]\caption{Frequently used mathematical notations and their definition}\resizebox{\textwidth}{!}{
\label{tab:math}
\begin{tabularx}{\textwidth}{|l|X|l|X|}
\hline
\multicolumn{1}{l|}{Notation} & \multicolumn{1}{l|}{Definition}                                      
& \multicolumn{1}{l|}{Notation} & \multicolumn{1}{l|}{Definition}                  \\ \hline

$k$                               & Time step counter in the discrete time domain  
& $\mathcal{K}$                      & A class of continuous and strictly increasing functions starting at zero \\

$N$                                & Prediction horizon & 
$\mathcal{K}_\infty$               & A subclass of $\mathcal{K}$\\

$\bz_k$                           & Nominal state vector 
& $\mathbb{W}(\bx_k)$               & Set of all possible disturbances given state $\bx_k$ of the system at time step $k$ 
\\

$\bx_k$                   & State vector at time step $k$ for a given dynamical system 
 & $\Bar{\mathbb{W}}_k$                & Set of all possible disturbances for all possible states estimated at time step $k$\\

$e_k$                              & Error between the real and the nominal state for time step $k$              
&  $\mathbb{X}$                      & Admissible set for the states  
\\

$\bw(\bx_k)$                      & Disturbance vector when the state of the system is $\bx_k$             
       &  $\mathbb{Z}_f$                    & Terminal set for the nominal states                          \\

$\bv_k$                           & Nominal input vector      
&   $\mathbb{U}$                       & Admissible set for the control inputs     \\

$\bu_k$                            & Input vector   
&  $\mathbb{N}$                       & Set of natural numbers \\

 $\tilde{\bz}_k$                    & Sequence of the nominal state vectors across the prediction horizon (i.e., $\{\bz_{k+1}, \ldots, \bz_{k+N}\}$) estimated at time step $k$                                                            
&  $\mathbb{R}$                     & Set of real numbers\\

 $\tilde{\bx}_k$                    & Sequence of the state vectors across the prediction horizon (i.e., $\{\bx_{k+1}, \ldots, \bx_{k+N}\}$) estimated at time step $k$ 
&
$\Bar{\mathbb{E}}_k$                 & Error set estimated for time step $k$, assuming an autonomous evolution of the error set (excluding the influence of the disturbances at previous time step $k-1$) \\
$\tilde{\bv}_k$                   & Sequence of the nominal input vectors across the prediction horizon (i.e., $\{ \bv_k,\ldots,\bv_{k+N-1}\}$) estimated at time step $k$ 
& $\mathbb{E}_k$                       & Set of all possible errors between the real and the nominal state for time step $k$    \\

$\tilde{\bu}_k$                    & Sequence of the input vectors across the prediction horizon (i.e., $\{ \bu_k,\ldots,\bu_{k+N-1}\}$) estimated at time step $k$ 
&    $\mathbb{T}\maxu_{k}$              & Tube of regular robust TMPC for prediction horizon $\{k+1, \ldots , k+N\}$ with a fixed cross section area $\mathbb{E}\maxu$  \\

$\pi(\cdot)$                      & Ancillary control law 
&  $\mathbb{T}_k$                     & Tube of \dtmpc\ for  prediction horizon $\{k+1, \ldots , k+N\}$ with a dynamic cross section area $\mathbb{E}_i$ for $i=k+1, \ldots,k+N$\\

 $\epsilon(\cdot)$                 & Autonomous error dynamic function, which determines $\Bar{\mathbb{E}}_k$ based on $\mathbb{E}_{k-1}$ 
& $\oplus$                          & Minkowski summation \\

$(\cdot)_{i|k}$                           & prediction of a dynamic variable for time step $i$, given the measurements at time step $k$   & $\ominus$                         & Minkowski difference                              \\


\hline   
\end{tabularx}}
\end{table}

\section{Related work}
\label{sec:background}

\paragraph{Modeling via fuzzy inference systems} 
FIS can approximate any nonlinear function, 
when the inputs of the function are bounded \cite{universalApproximationFIS}. 
Various supervised learning algorithms, e.g., gradient descent \cite{gradient_FIS_learning}, 
learning from example \cite{LFE}, and genetic algorithm (GA) \cite{FISlearningGA} 
 have been proposed for the FIS to learn such functions from data. 
 One of the most common FISs used in various applications, including robotics, 
 is Takagi-Sugeno-Kang (TSK) FIS. 
 TSK FISs can be updated online in a stable way using, e.g., 
 reinforcement learning 
 (see applications to robot navigation \cite{fuzzyRLnavigation}, balancing a bipedal robot \cite{FRLbipedalRobot}, 
 controlling a continuum robot \cite{FRLcontinuumRobot}). 
 To the best of our knowledge, FISs have never been used to model 
the dynamics of external disturbances, especially for providing robustness for model-based 
control approaches, e.g., MPC.%

\paragraph{Robust MPC}

Linear MPC is a well-known, theoretically mature control approach 
that  has successfully been implemented 
in many applications \cite{oldMPCsufrvey,newMPCsufrvey}. {Linear MPC was utilized to regulate multiple vehicles across various domains such as spacecraft altitude control \cite{LMPCaerospace}, ground vehicle steering \cite{LMPCsteering}, and drone trajectory tracking \cite{LMPCdroneTracking}. The LMPC framework incorporates a linear system, linear constraints, and a quadratic cost function.
In real-life problems, including SaR, in order to deal with 
nonlinear cost functions and nonlinear constraints, a nonlinear version of MPC may be used. 
Nonlinear model predictive control (NMPC) has been implemented in various applications including exploration of a grid world with multiple agents \cite{DMPCexlporation}, decision making for simultaneous localization and mapping (SLAM) \cite{MPCSLAM}, obstacle avoidance \cite{sliding-modeNMPC}, and global path planning \cite{pathAlteringMPC}. 
Although MPC is robust to small disturbances (due to operating upon state feedback),  
linear and nonlinear MPC cannot deal with large external disturbances \cite{SMPCsurvey}. 
Therefore, robust MPC approaches have been introduced.%


One of the most attractive robust MPC methods is robust tube MPC (TMPC) \cite{TMPC}, where a robust control 
strategy that is designed offline is used to keep the state trajectory within an invariant tube, 
the center-line of which is a nominal trajectory of the states that is determined online.  
For the sake of brevity of the notations, we refer to robust TMPC as TMPC in the rest of the paper. 
The nominal state trajectory can be determined for the system whenever the controller has a perfect model of the system and there are no disturbances. 
Although the nominal states are predictable in such cases, the actual states cannot be known in advance because the external disturbances are unknown. However, since the disturbances are assumed to be bounded and 
thus the maximum error between the actual and the nominal states can be determined, it is possible to compute 
a sequence $\mathbb{T}\maxu_{k}$  of $N$ sets per time step $k$, where each set $\{z_{i|k}\}\oplus\mathbb{E}\maxu$ 
 within this sequence includes all the possible actual states for a given time step $i$ within the prediction window 
 $\{ k+1, \ldots, k+N\}$, and $N$ is the prediction horizon of TMPC. 
Then the state constraints are guaranteed to be satisfied within the prediction window, if all the states within 
sequence $\mathbb{T}\maxu_{k}$ satisfy the constraints. 
In order to prevent the actual states from deviating significantly from the corresponding nominal states, an ancillary control law, 
e.g., an error state feedback controller \cite{TMPC}, another MPC controller \cite{TNMPBwithMPCasAnciallary}, or a sliding mode control \cite{sliding-modeNMPC}, is used to reduce the error between the actual and the nominal state. 

There are two common ways of designing and implementing TMPC \cite{TVTMPC}: 
\begin{itemize}
    \item  
    The first approach involves determining a single error set (i.e., tube) for the entire prediction window based on the error dynamics and the selected ancillary control law, 
    where this error set serves as a positive robust invariant set. In other words, if the current error belongs to this positive robust invariant set, then all the future errors will remain within the set. This approach benefits from a low online computational complexity, since the positive robust invariant set can be computed offline. Moreover, tighter constraints can be imposed on the nominal set, such that if the center of the positive robust invariant set is inside the tightened constraints, then the actual state remains in the original feasible set. However, using this approach may result in more restrictive control inputs. 
    \item The second approach involves computing the error set per time step within the prediction window. This approach results in a sequence of regions, known as reachable sets, which are the smallest sets of possible states for a system prone to external disturbances that guarantee that the states remain in these sets for all time steps. This approach may yield less conservative solutions for TMPC, but requires additional online computations compared to the first approach. 
\end{itemize}}

Some variations of TMPC for nonlinear systems have been proposed. 
Nonlinear TMPC is much more challenging than linear TMPC, particularly because it is not trivial to choose an ancillary control law and to design a tube. 
One way to reduce the computational complexity of nonlinear TMPC is by using parameterized TMPC, such as the approach in  \cite{FPTMPC}, 
which reduces the required online optimization into a linear programming problem.    
Another alternative is to employ ellipsoids as tubes, instead of polytopic sets (see, e.g., \cite{DTMPCelliseTimeVarying}).  
Some examples of nonlinear TMPC can be found in \cite{TNMPBwithMPCasAnciallary,sliding-modeNMPC,deepLearningTubes,stateDepdentNoiseMPC_GP,SDSMPC_implmentedAtmosphericPressurePlasmaJetsForPlasmaMedicine}. 

The first implementation of robust MPC (although not TMPC)
when state-dependent disturbances exist includes \cite{first_RMPC_SD}. 
In \cite{STRMPC_as_cone_program} the optimization problem of robust MPC 
has been handled as a 2nd-order cone program. This approach, however, 
can only be used for linear systems subject to a certain group of 
additive disturbances (i.e., disturbances defined as the summation 
of an independent component from a polytope and a state and
input-dependent component bounded via a non-convex inequality). 
Similarly, in \cite{RMPC_SD_nondeadcreasing_w} robust MPC has been discussed for a 
special class of state-dependent disturbances. 
However, our proposed TMPC-based approach, i.e., \dtmpc, is not limited to any type of nonlinearity.
Authors in \cite{sliding-modeNMPC} propose a stable controller for  an 
agent that should avoid obstacles in an environment with disturbances that are proportional 
to the square of its velocity. The proposed method, however, is restricted 
to a sliding-mode ancillary control law, and thus can only be used for 
feedback-linearizable or minimum-phase systems. 
In \cite{ILFRMPC}, a dynamic tube is used and 
is parameterized as a sublevel set of incremental Lyapunov functions. 
This method is extended in \cite{SDSMPC_with_ILF}, 
where chance constraints and state or input-dependent disturbances are included. 
This paper has the most similarities with \dtmpc, since it 
uses a dynamic tube that evolves in time, and tightens the 
constraints online. 
Using a sub-level set of incremental Lyapunov functions 
reduces the conservativeness with a small increase in parameters and equations (and probably in computations) compared to other approaches in the literature. 
The parameterization, however, restricts the shape of the tube, 
which potentially leads to conservative decisions. 
In \cite{deepLearningTubes}, a neural network has been used to learn 
the dynamics of the tube of robust TMPC. 
The neural network, however, cannot be initialized without an extensive dataset, 
and thus another controller should be used until a sufficient number of data is gathered. 
Moreover, no proof has been provided that the trajectory of the states 
remains inside the tube. 
In \cite{stateDepdentNoiseMPC_GP}, a Gaussian process is used to learn 
the disturbance set. 
The stability of the algorithm for linear systems has been proved. However, this method requires heavy offline computations, and discretization 
of the state space, and may thus become intractable for systems with large state spaces. 

\dtmpc\  is proved to be stable under standard assumptions. 
It can be used in unknown and time-varying environments 
and can be initialized with intuitive rules and be tuned online. 
Unlike most state-of-the-art methods, \dtmpc\ 
is not limited to any specific class of nonlinearity and 
does not restrict the choice of the ancillary control law. 
Moreover, unlike state-of-the-art methods that assume the  
bounds or the probability distribution of the disturbances 
are known, for \dtmpc\ it is possible to learn the disturbances online.%

\setlength{\belowdisplayskip}{5pt} \setlength{\belowdisplayshortskip}{3pt}
\setlength{\abovedisplayskip}{5pt} \setlength{\abovedisplayshortskip}{3pt}
\section{\dtmpc: Idea, formulation, and stability}
\label{sec:methodologies}

In this section, we describe and formulate the problem and  
discuss the proposed methods for \dtmpc.%

\subsection{Problem statement}
\label{sec:problem_statement}

We consider a dynamic system that is described in discrete time with the 
state vector $\bx_k$ and control input $\bu_k$, and is affected by additive state-dependent external disturbances 
$\bw(\bx_k)$ at time step $k \in \mathbb{N}$, where $\bx_k \in \mathbb{X} \ \subseteq \mathbb{R}^n$, 
$\bu_k \in \mathbb{U} \ \subseteq \mathbb{R}^m$, and $\bw(\bx_k) \in \mathbb{W}(\bx_k) \ \subseteq \mathbb{R}^n$. 
While the admissible sets $\mathbb{X}$ and $\mathbb{U}$ for, respectively, the state and the 
control input are static, we allow the admissible set $\mathbb{W}(\bx_k)$ of the 
external disturbances to vary as a function of the state. 
Learning (an approximate of) the admissible set $\mathbb{W}(\bx_k)$, such that it bounds the 
external disturbances per state, instead of considering a set that bounds all potential 
external disturbances for the entire admissible set $\mathbb{X}$ in time, 
will introduce dynamics and reduced conservativeness into \dtmpc. 

The dynamic system is given by:
\begin{equation}
\label{eq:system}
    \bx_{k+1}=f(\bx_k,\bu_k)+\bw(\bx_k)
\end{equation} {where $f(\cdot)$ is a time-invariant time-discrete nonlinear Lipschitz function.}

We assume that the states are measured by perfect sensors, i.e., per time step $k$ 
the value of the state $\bx_k$ is perfectly known, whereas the external disturbances 
are unknown and may take any arbitrary value within 
$\mathbb{W}(\bx_k)$.
The aim is to control the dynamics of system \eqref{eq:system}, such that a given cost function  $J(\cdot)$ is minimized 
across a prediction horizon of size $N$, 
while it is guaranteed for the controlled system that for all admissible  external disturbances the hard constraints are always satisfied. 
Thus, for time step $k$ we have: 
\begin{align}
    \label{eq:original_MPC} 
    J^*(\bx_k)=&\min_{\tilde{\bx}_k, \tilde{\bu}_k} J(\tilde{\bx}_k,\tilde{\bu}_k) \\
    &\textrm{s.t. for $i = k, \ldots, k+N-1$} \nonumber\\
    &\quad  \bx_{i+1|k}=f(\bx_{i|k},\bu_{i|k})+\bw(\bx_{i|k}) \nonumber\\
    &\quad \bx_{k|k} = \bx_k \nonumber\\
    & \quad  \bx_{i+1|k} \in \mathbb{X} \nonumber\\ 
    & \quad  \bu_{i|k} \in \mathbb{U} \nonumber\\ 
    & \quad  \bw(\bx_{i|k}) \in \mathbb{W}(\bx_{i|k}) \nonumber
\end{align}
with $\tilde{\bx}_k = [\bx^\top_{k+1|k},\ldots,\bx^\top_{ k+ N|k}]^\top$, 
$\bx_{i|k}$ for $i>k$ the  value of $\bx_i$ predicted at time step $k$,  
$\tilde{\bu}_k = [\bu^\top_{k|k},\ldots,\bu^\top_{ k+ N - 1 |k}]^\top$,  
$\bu_{i|k}$ for $i\geq k$ the  value of $\bu_i$ determined at time step $k$, 
and $J^*(\bx_k)$ an optimal value for the cost function obtained by solving the constrained minimization problem.
The cost function $J(\cdot)$ 
is composed of a stage cost that is accumulated across the 
prediction horizon $\{k, \ldots,k+N-1\}$, and a terminal cost that is computed at the terminal 
time step $k+N$. The cost function is continuous and finite for $\bx_i\in\mathbb{X},\ \forall i\in\{k, \ldots, k+N\}$.

\subsection{State-dependent dynamic tube MPC (\dtmpc)}
\label{sec:sddtmpc}

TMPC works based on the assumption of bounded external disturbances, where the boundary set $\mathbb{W}\maxu$  for the disturbances is known and given \cite{TMPC}. 
A main challenge for \dtmpc\ in solving \eqref{eq:original_MPC} per time step $k$ is that the dynamic 
set $\mathbb{W}(\bx_{i})$ (for $i = k, \ldots, k+N-1$) is not known (neither for the current time step nor 
for the future time steps) and should thus be estimated.  
The link between  the known static set $\mathbb{W}\maxu$ and the dynamic set $\mathbb{W}(\bx_i)$ of disturbances that 
should be estimated by \dtmpc\ for every prediction horizon $\{k + 1, \ldots, k+N\}$ is given by: 
\begin{equation}
\label{eq:link_disturbance_sets}
 \bigcup_{
\substack{
i=k,\ldots,k+N - 1
 }}
\mathbb{W}(\bx_i) \subseteq \mathbb{W}\maxu  
\end{equation}
We assume that $\mathbb{W}(\bx_i)$ always contains the origin.
Regular TMPC uses $\mathbb{W}\maxu$ to determine a sequence of $N$ control inputs 
and a tube $\mathbb{T}_{k}\maxu$ for the entire prediction window per time step $k$, for which the  
cross section $\mathbb{E}\maxu$ remains unchanged.  
The tube  $\mathbb{T}_{k}\maxu$ is a robust positive invariant set. 
The main difference between TMPC and our proposed approach, \dtmpc, is in the formulation of their 
nominal MPC. This allows \dtmpc\ to  make use of 
the dynamics of the external disturbances, in order to generate per time step $k$ a tube $\mathbb{T}_{k}$ with a generally 
time-varying cross section $\mathbb{E}_i$ for $i=k+1, \ldots,k+N$  
across the prediction horizon that  improves the control performance by generating less conservative solutions. 
Note that set $\mathbb{T}_{k}$ is a robust control invariant set, i.e., there is at least one control 
policy that prevents the state trajectory from leaving the tube. 
The nominal problem of \dtmpc\ is given by:

\begin{subequations}
\label{eq:optimization}
\begin{equation}
\label{eq:cost}
     V^*(\bx_k,\bz_k) = \min_{\Tilde{\bz}_k, \Tilde{\bv}_k , \mathbb{T}_k}\sum_{i=k}^{k+N-1}l(\bz_{i|k},\bv_{i|k})+V_\text{f}(\bz_{k+N|k})
\end{equation}
\begin{equation*}
     \text{ s.t. for $i = k+1, \ldots, k+N$: }
\end{equation*}
\begin{equation}
\label{eq:nominal_system}
    \bz_{i|k}=f(\bz_{i-1|k},\bv_{i-1|k})  
\end{equation}
\begin{equation}
\label{eq:2nd_tube}
    \Bar{\mathbb{E}}_{i|k} =\epsilon(\mathbb{E}_{i-1|k})
\end{equation}
\begin{equation}
\label{eq:FIS_tube}
    \Bar{\mathbb{W}}_{i-1|k} = \text{FIS} \left(\{\bz_{i-1|k}\}\oplus\mathbb{E}_{i-1|k} \right)   
\end{equation}
\begin{equation}
\label{eq:final_tube}
    \mathbb{E}_{{i|k}} = \Bar{\mathbb{E}}_{i|k}\oplus\Bar{\mathbb{W}}_{i-1|k}
\end{equation}
\begin{equation}
\label{eq:first_tube}
   \mathbb{E}_k=\{\bx_k-\bz_k\}
\end{equation}
\begin{equation}
\label{eq:feasible}
     \{\bz_{i|k}\}\oplus\mathbb{E}_{i|k} \subseteq \mathbb{X} 
\end{equation}
\begin{equation}
\label{eq:terminal}
  \bz_{N|k} \in \mathbb{Z}_\text{f}
\end{equation}
 \begin{equation}{
  \label{eq:tubeTk_l}
     \mathbb{T}_k = \left\{\{z_{k+1|k}\}\oplus\mathbb{E}_{k+1|k}, \ldots, \{z_{N|k}\}\oplus\mathbb{E}_{k+N|k} \right\}   } 
 \end{equation}
\begin{equation}
\label{eq:policy_feasible}
    \pi\left(\{\bz_{i|k}\}\oplus\mathbb{E}_{i|k},\bv_{i|k},\bz_{i|k}\right) \subseteq \mathbb{U}
\end{equation}
\end{subequations}
In \eqref{eq:cost}, $l(\cdot)$ is the  stage cost, $V_\text{f}(\cdot)$ is the terminal cost,  and 
$\Tilde{\bz}_k$ and $\tilde{\bv}_k$ are the trajectory/sequence of, respectively, the nominal states and the nominal control inputs within the prediction window $\{k+1, \ldots,k+N\}$. The realized states of the system follow the nominal state sequence  
when there are no external disturbances. 
The nominal states $\bz_{i|k}$ are predicted at time step $k$ for time step $i = k+1, \ldots, k+N$ according to \eqref{eq:nominal_system}. 
Moreover, $\oplus$ represents the Minkowski addition \cite{Calculting_tube}, which for every two given sets $A$ and $B$ 
is defined by: 
\begin{equation}
\label{eq:Minkowski}
    \mathbb{A}\oplus\mathbb{B} = \{a+b : a\in\mathbb{A},b\in\mathbb{B}\}
\end{equation} 

The error set $\mathbb{E}_{i|k}$ (for $i = k+1, \ldots, k+N$) for \dtmpc\ is computed in three steps:  
First, in \eqref{eq:2nd_tube} the system uses the autonomous error dynamic function $\epsilon(\cdot)$ to 
estimate the evolved error set, without considering the influence of the state-dependent disturbances  
on the errors. The autonomous error dynamic function is determined via the state-of-the-art methods and   
based on the nominal model $f(\cdot)$ of the system dynamics and the policy $\pi(\cdot)$,  
assuming no external disturbances exists. Note that the error vector is defined in the prediction window by 
$\be_{i|k} = \bx_{i|k} - \bz_{i|k}$ 
for $i = k+1, \ldots, k+N$. 
For details on how $\epsilon(\cdot)$ 
can be determined, see, e.g., \cite{TMPC}, for linear TMPC and \cite{sliding-modeNMPC} for nonlinear TMPC.   
%
%
Second, in \eqref{eq:FIS_tube} a mapping (in this case a fuzzy inference system, FIS) is used to determine an admissible set $\Bar{\mathbb{W}}_{i-1|k}$ per time step $i$ ($i= k+1, \ldots, k+N$) that contains all the possible disturbances at time step $i-1$ that can result 
in all the possible states at time step $i$, which are obtained based on the estimated nominal states (see \eqref{eq:nominal_system}) and the error set corresponding to the previous time step. 
Note that the mapping $\text{FIS}(\cdot)$ is designed, such that $\Bar{\mathbb{W}}_{i-1|k}$ is a subset of $\mathbb{W}\maxu$ for $i = k+1, \ldots, k+N$ (see Section~\ref{sec:Fuzzy_System} for the details). 
Third, the sets obtained via the previous two steps (i.e., via \eqref{eq:2nd_tube} and \eqref{eq:FIS_tube}) are combined in \eqref{eq:final_tube}. In other words, the set  $\mathbb{E}_{i|k}$ of all possible errors for time step $i$ is obtained by 
combining the autonomously evolved error set and the set of all possible disturbances from the previous time step that can result in further errors. 
Based on Definition~\ref{def:D2} given next, $\mathbb{E}_{i|k}$ for $i = k+1, \ldots, k+N$ is an estimation of a robust forward reachable set for $\mathbb{E}_{i-1|k}$.
\begin{my_def}
\label{def:D2}
Consider the autonomous system that is formulated via \eqref{eq:2nd_tube}-\eqref{eq:final_tube}.  
A one-step robust reachable set from the set of errors $\mathbb{E}_{i-1|k}$ that is estimated for time step $i-1$ (with $i = k+1, \ldots, k+N$) is defined based on (\cite{TubeLipsitz}) via:
\begin{equation*}
    \mathcal{R}_\epsilon(\mathbb{E}_{i-1 |k },\Bar{\mathbb{W}}_{i-1|k})= 
    \Big\{\be\in \left(\mathbb{X} \ominus \{\bz_{i|k}\} \right)   
    |\exists \be_{i-1 | k}\in \mathbb{E}_{i-1|k},\exists \bw(\bx_{i-1|k})\in\Bar{\mathbb{W}}_{i-1|k}:\be=\epsilon(\be_{i-1|k})+\bw(\bx_{i-1|k}) \Big\}
\end{equation*}
\end{my_def} 
The three steps explained above for estimation of the error set $\mathbb{E}_{i|k}$ (for $i = k+1, \ldots, k+N$) 
are illustrated in Figure~\ref{fig:3ce}. 
Constraint 
 \eqref{eq:first_tube} initializes 
the error set per time step when the nominal \dtmpc\ is solved. 
Constraint \eqref{eq:feasible} keeps the realized states, 
estimated based on the nominal state and the error values, 
inside the admissible state set $\mathbb{X}$. 
Finally, \eqref{eq:terminal} is a terminal constraint with $\mathbb{Z}_{\text{f}} \subseteq \mathbb{X}$, 
where $\mathbb{Z}_{\text{f}}$ is a control invariant set for the nominal system  
that guarantees the recursive feasibility.%

\begin{figure}
    \centering
    \includegraphics[width=1\textwidth]{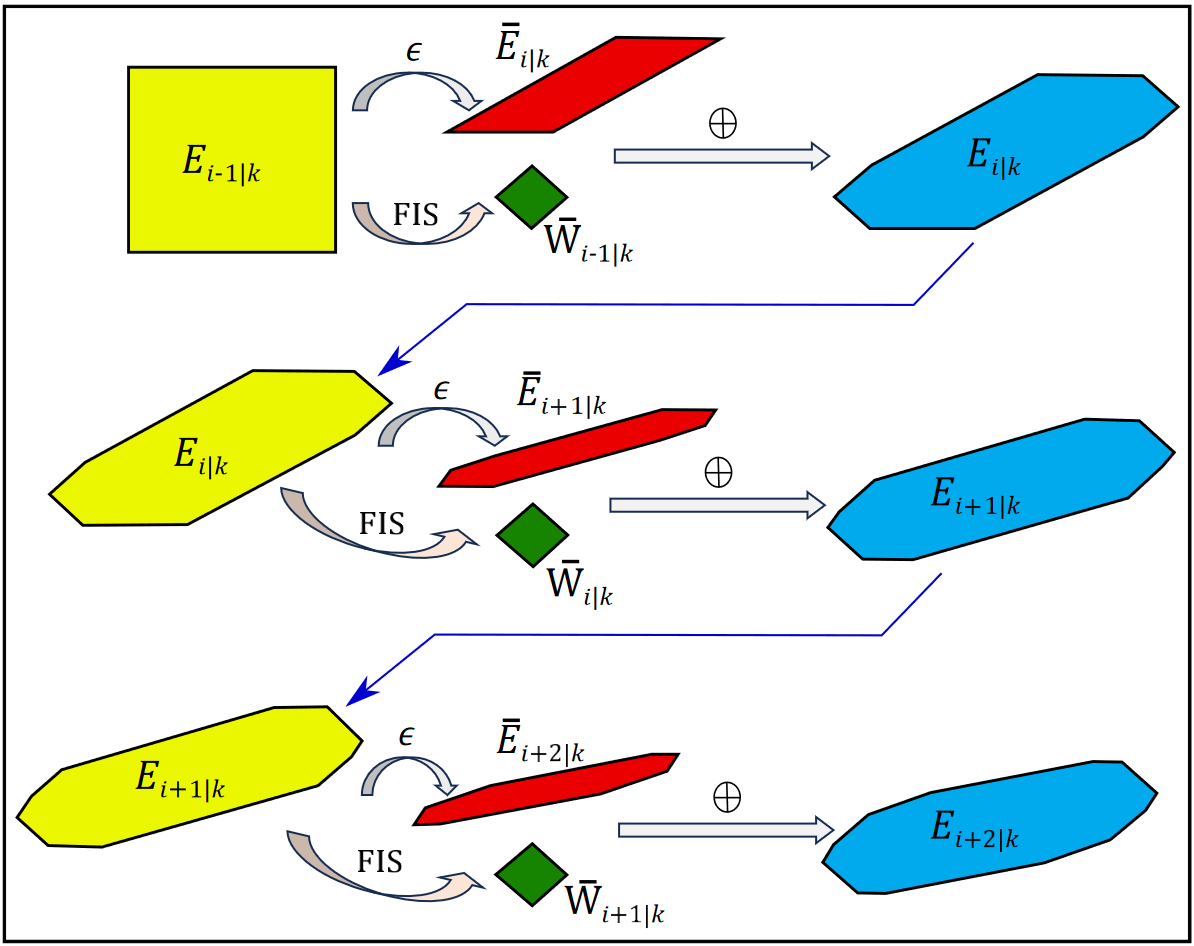}
    \caption{Illustration of the development of the error set $\mathbb{E}_{\cdot|k}$ for \dtmpc\ via \eqref{eq:2nd_tube}-\eqref{eq:final_tube} in $2$ dimensions: Each row corresponds to one prediction time step (illustrated in this figure for time steps $i$, $i+1$, $i+2$). 
    The $2$-dimensional sets in the first column (shown in yellow)  illustrate the error set $\mathbb{E}_{\cdot - 1|k}$ that has been determined at the previous time step. By propagating this error set through the autonomous error dynamics (i.e., mapping $\epsilon$) via \eqref{eq:2nd_tube}, the $2$-dimensional sets $\Bar{\mathbb{E}}_{\cdot|k}$  at the top side of the second column of each row (shown in red)  are generated. Moreover, the $2$-dimensional sets $\Bar{\mathbb{W}}_{\cdot - 1|k}$  at the bottom part of the second column in each row (shown in green) are obtained via the fuzzy inference system (mapping $\text{FIS}$) using \eqref{eq:FIS_tube}.  Finally, the red and green $2$-dimensional sets of the second column per row are combined using Minkowski summation to obtain the 
    error set $\mathbb{E}_{\cdot|k}$ via \eqref{eq:final_tube}. 
    This set is then used as the starting set for the next time step (i.e., at the start of the next row).}
    \label{fig:3ce}
\end{figure}

The dynamic tube $\mathbb{T}_k$ of \dtmpc\ for time step $k$ is given by \eqref{eq:tubeTk_l}.  
The optimization variables of \eqref{eq:optimization} are the nominal state sequence $\Tilde{\bz}_k$, 
the nominal control input sequence $\Tilde{\bv}_k$, 
and the \dtmpc\ tube $\mathbb{T}_k$ (i.e., an ordered set including the influence of the error sets across the entire prediction horizon). 
By including the tube as an optimization variable for the nominal \dtmpc\ problem, the optimizer determines 
solutions that foresee the dynamics of the error in the generation of the online  
nominal state trajectory (i.e., the center-line of tube $\mathbb{T}_k$).  
Therefore, the nominal state trajectory of \dtmpc\ is in general different from that  of regular TMPC (see Figure~\ref{fig:illustration}).   
Moreover, the \dtmpc\  policy $\pi: \mathbb{X}\times \mathbb{U} \times \mathbb{X} \rightarrow \mathbb{U}$, which receives the nominal state trajectory of \dtmpc\ as input, 
is incorporated within the optimization loop of \dtmpc\ via constraint \eqref{eq:policy_feasible}, as well as in the estimation of the autonomous error dynamic function as explained before. 
The policy $\pi(\cdot)$ of \dtmpc\ can be generated via state-of-the-art methods (see, e.g., 
\cite{TNMPBwithMPCasAnciallary,sliding-modeNMPC} for determining a policy that stabilizes the closed-loop system). 
Constraint \eqref{eq:policy_feasible} enforces the generated control inputs to remain inside the admissible control input set $\mathbb{U}$ 
for all possible realizations of the state. Thus the policy also plays a role in the computation of  
the online nominal state trajectory that is determined via \dtmpc. 
Note that for computing the policy in \eqref{eq:policy_feasible}, where the input of the function is a set, Definition~\ref{def:D1} given 
next is used.%
\begin{my_def}
Whenever a function $g(\cdot)$ takes a set $\mathbb{S}$ 
as input, then the output $\mathbb{O}$ is also a set defined by 
$\mathbb{O}\coloneqq \big\{g(i)  \big| i\in\mathbb{S}\big\}$. 
Thus, if $\mathbb{S}_1\subseteq\mathbb{S}_2$, 
then $g(\mathbb{S}_1)\subseteq g(\mathbb{S}_2)$. 
\label{def:D1}
\end{my_def} 
State-of-the-art TMPC methods, however, solve the nominal MPC problem without incorporating 
the policy $\pi(\cdot)$. In fact, only after determining the nominal state trajectory via an optimization procedure, 
the policy of TMPC is used outside of the optimization loop to generate the actual control input $\bu_k$.

In summary, the inclusion of the error evolution model into the optimization procedure of \dtmpc, 
which is done via incorporating the dynamic tube of \dtmpc\ in the optimization variables, as well as including the policy 
of \dtmpc\ into the constraints, are expected to result in solutions for \dtmpc\ that are less conservative than the solutions of TMPC.%


The optimization problem \eqref{eq:optimization} is in general nonlinear and non-convex. 
It is common to assume convexity for the admissible sets $\mathbb{X}$ and $\mathbb{U}$ for the states and 
the control inputs \cite{TMPC,stateDepdentNoiseMPC_GP}, but this is not the case in some applications (e.g., in collision avoidance \cite{sliding-modeNMPC,TubeLipsitz}, which is relevant for SaR).%

\subsection{Takagi-Sugeno-Kang fuzzy inference system (TSK-FIS) for modeling the dynamics of the external  disturbances}
\label{sec:Fuzzy_System}

A fuzzy inference system (FIS) can describe the dynamics of a (generally nonlinear) 
system via rules that are formulated 
via linguistic terms (e.g., very high), where this nonlinear mapping is interpretable --unlike, e.g., neural networks \cite{wide_fuzzy_book_2017}. 
The rules of a FIS can be generated based on experimental data or can be deduced directly from expert knowledge available in human language. In unknown environments where safety 
is crucial, a FIS with conservative rules may initially be used, and 
the rules can be tuned online to improve the performance.%

 The goal is to use a TSK-FIS for modeling the set of the external disturbances per time step $i-1$, where $i$ 
 belongs to the prediction horizon $\{k+1, \ldots,k+N\}$, as a function of the 
system states, i.e., to approximate $\mathbb{W}(\bx_{i-1|k})$ given in \eqref{eq:link_disturbance_sets} by $\bar{\mathbb{W}}_{i-1|k}$ using \eqref{eq:FIS_tube}. 
For the type of the set, we consider an ellipsoid or a polytope and assume that the origin always belongs to the set. 
Ellipsoids and polytopes are the most commonly used sets in related literature \cite{TMPC,DTMPCelliseTimeVarying} and --when being parameterized-- are bounded, if these parameters are bounded. 
Next, we parameterize the set and show the parameterized set by $\Bar{\mathbb{W}}_{i-1|k}(\bm{\Theta}_{i-1|k})$ where $\bm{\Theta}_{i-1|k}$ is the vector of all parameters at time step $i-1$. 
Therefore, the problem of approximating the disturbance set is translated into the problem of determining the vector $\bm{\Theta}_{i-1|k}$. 
In fact, this vector is returned by the FIS, 
which receives as input the corresponding state of the system (see \eqref{eq:FIS_tube}). 
In order to ensure robustness for \dtmpc\ to all values of the external disturbances, 
i.e., to make sure that \eqref{eq:link_disturbance_sets} holds, 
parameter vector $\bm{\Theta}_{i-1|k}$ should be determined such that the actual disturbance set 
$\mathbb{W}(\bx_{i-1|k})$ for all time steps $i\in\{k+1, \ldots,k+N\}$ is a subset of the approximate disturbance set 
$\Bar{\mathbb{W}}_{i-1|k}(\bm{\Theta}_{ i-1|k })$, which itself should be a subset of $\Bar{\mathbb{W}}\maxu$.

The rule base of the TSK-FIS of \dtmpc\ is composed of rules with the formulation given by \eqref{eq:final_rule}, 
where $\widetilde{X}^l$ is a fuzzy set that mathematically represents a linguistic term that describes the input variable $x$:
\begin{equation}
\label{eq:final_rule}
    \text{Rule } l:\text{ IF }x \text{ is } \widetilde{X}^l\text{, THEN } y^l \text{ is } h^l(x)
\end{equation}
The output generated by each rule concerns only one element of parameter vector $\bm{\Theta}_{i-1|k}$. 
More specifically, for $i\in\{k+1, \ldots,k+N\}$, the input $x$ is replaced by $\bx_{i-1|k}$, 
and the output $y^l$ is replaced by an element ${\theta}^l_{i-1|k}$ of the parameter vector. 
Note that more than one rule in the rule base may generate a candidate value for this element of 
the parameter vector. Therefore, the superscript $l$ is used to show that the value 
is generated for this element via the $l^{\text{th}}$ rule within the set of all $L$ rules that generate 
a candidate value for this element. 
Moreover, $h^l(\cdot)$ shows a generally nonlinear mapping from the input space to the output space 
(i.e., from the admissible set of the state variables of the dynamical system to the admissible set for element $\theta_{i-1|k}$ of the vector $\bm{\Theta}_{i-1|k}$).   
Then the final value for the parameter is computed by:
\begin{equation}
\label{eq:FIS}
    \theta_{i-1|k}=\frac{\Sigma^L_{l=1}\mu^l(\bx_{i-1|k})h^l(\bx_{i-1|k})}{\Sigma^L_{l=1}\mu^l(\bx_{i-1|k})}
\end{equation}
where $\mu^l(\cdot)$ is the membership function of fuzzy set $ \widetilde{X}^l$ and 
$L$ is the number of the rules in the rule base that generate a value for element $\theta_{i-1|k}$.

The robust control problem formulation is based on the assumption of existence of $\mathbb{W}\maxu$, which bounds all the possible disturbances. 
Thus, the evolution of the disturbance set generated via the TSK-FIS explained above is stable if 
the approximate disturbance set $\Bar{\mathbb{W}}_{i-1|k}(\bm{\Theta}_{i-1|k})$ is bounded by 
$\mathbb{W}\maxu$ for all $i\in\{k+1, \ldots,k+N\}$. 
The approximate set $\Bar{\mathbb{W}}_{i-1|k}(\bm{\theta}_{i-1|k})$ is bounded, whenever all 
elements of vector $\bm{\Theta}_{i-1|k}$ are  bounded (based on the assumption of an ellipsoid or polytope set or any other set that 
satisfies the condition of bounded output for bounded input). 
The membership functions in  \eqref{eq:FIS} are restricted to the interval $[0, 1]$ by definition, 
implying that to keep the elements of $\bm{\Theta}_{i-1|k}$ bounded, the functions $h^l(\cdot)$ (with $l\in\{1,\cdots,L\}$ ) 
should be formulated such that for all $\bx_{i-1|k}\in\mathbb{X}$ the function remains bounded. 
Finally, to ensure that the approximate set $\Bar{\mathbb{W}}_{i-1|k}(\bm{\Theta}_{i-1|k})$ that is designed to be bounded, 
is also a subset of $\mathbb{W}\maxu$, this set is defined as the union of the output of the TSK-FIS and $\mathbb{W}\maxu$.%



\subsection{Stability of \dtmpc}
\label{sec:stability_analysis}

In this section, we discuss the stability of \dtmpc, where our discussions are based on 
the following assumptions:  

\begin{enumerate}[label=A\arabic*]
    \item \label{as:tube}
    There exists 
    $\mathbb{T}\maxu_{k} = {\{ \{z_{k+1|k}\}\oplus\mathbb{E}\maxu, \ldots,  \{z_{N|k}\}\oplus\mathbb{E}\maxu\}}$ 
    that propagates across the prediction horizon at time step $k$ and is positive robust invariant under the policy $\pi(\cdot): \mathbb{X}\times \mathbb{U} \times \mathbb{X} \rightarrow \mathbb{U}$, i.e., when 
    the system follows policy $\pi(\cdot)$, the state remains inside this set.
    \item \label{as:invariant}
    There exists a terminal control invariant set $\mathbb{Z}_\text{f} \subseteq \mathbb{X}$ for the nominal system, such that  $\mathbb{Z}_\text{f}\oplus\mathbb{E}\maxu\subseteq\mathbb{X}$.
    \item \label{as:kappa}
    There exists a control law $\kappa: \mathbb{Z}_\text{f} \rightarrow \mathbb{U}$ for the nominal system, such that for $\bz_k \in \mathbb{Z}_\text{f}$ and $i=k,\ldots,k+N-1$:
    \begin{itemize}
        \item $f(\bz_{i|k},\kappa(\bz_{i|k}))\in\mathbb{Z}_\text{f}$ 
        \item $V_\text{f}(f(\bz_{i|k},\kappa(\bz_{i|k})))-V_\text{f}(\bz_{i|k})\leq-l(\bz_{i|k},\kappa( \bz_{i|k})) \qquad
        $ (where $V_\text{f} (\cdot)$ is the terminal cost as is given in \eqref{eq:cost})
        \item $\pi(\bx_{i|k},\kappa(\bz_{i|k}),\bz_{i|k})\in\mathbb{U}$,  
        $\bx_{i|k}\in \{\bz_{i|k}\}\oplus\mathbb{E}\maxu$
    \end{itemize}
    \item \label{as:k}
    There exists {$\mathcal{K}_\infty$} functions $\alpha_1(\cdot)$ and $\alpha_\text{{f}}(\cdot)$ that satisfy the following inequalities 
    (note that a function belongs to class {$\mathcal{K}$}, if it is continuous, zero at zero, and strictly increasing;  
    and a function belongs to class {$\mathcal{K}_\infty$}, if it is in class $K$ and is unbounded \cite{MPCbook}) ):
    \begin{itemize}
        \item $l(\bz_{i|k}, \bv_{i|k})\geq\alpha_1(|\bz_{i|k}|) \quad\forall \bz_{i|k}\in \mathbb{X},\forall \bv_{i|k}\in \mathbb{U} \qquad$ (where $l (\cdot)$ is the stage cost as is given in \eqref{eq:cost})
        \item $V_\text{f}(\bz_{i|k})\leq\alpha_\text{{f}}(|\bz_{i|k}|) \quad\forall \bz_{i|k}\in\mathbb{Z}_\text{f}$
    \end{itemize}\label{assumptions}
\end{enumerate}
Assumption~\ref{as:tube} implies that a stabilizable TMPC law can be determined for the system (i.e., the error does not go to infinity). 
Moreover, $\mathbb{T}\maxu_{k}$ is the tube with a constant cross section that is used in TMPC.
Assumption~\ref{as:invariant} (existence of a nominal invariant set) is a standard assumption in TMPC literature \cite{TMPC}. 
The first two items of Assumption~\ref{as:kappa} and Assumption~\ref{as:k}  are standard assumptions in MPC literature \cite{MPCbook} 
that are required to use the optimal cost as a Lyapunov function. 
The third item of Assumption~\ref{as:kappa} indicates that there exists a control law in the terminal set, 
such that the input constraints are satisfied.

We first give some theorems that are used in the discussions on stability.
\begin{theorem}
\label{th:min1}
If $\mathbb{C}\subseteq\mathbb{A}$, then $\mathbb{C}\oplus\mathbb{B}\subseteq\mathbb{A}\oplus\mathbb{B}$.
\end{theorem}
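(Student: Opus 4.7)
The statement is a routine monotonicity property of the Minkowski sum, so my plan is to do an elementwise containment argument directly from the definition given in equation \eqref{eq:Minkowski}. The plan is to take an arbitrary point in the left-hand set and exhibit it as a point of the right-hand set, which is the standard way to prove set inclusion.

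Concretely, I would start by picking an arbitrary $x \in \mathbb{C}\oplus\mathbb{B}$. By the definition $\mathbb{A}\oplus\mathbb{B} = \{a+b : a\in\mathbb{A}, b\in\mathbb{B}\}$ applied with $\mathbb{A}$ replaced by $\mathbb{C}$, there exist $c\in\mathbb{C}$ and $b\in\mathbb{B}$ such that $x = c + b$. Next I would invoke the hypothesis $\mathbb{C}\subseteq\mathbb{A}$ to upgrade $c\in\mathbb{C}$ to $c\in\mathbb{A}$. Then $x=c+b$ with $c\in\mathbb{A}$ and $b\in\mathbb{B}$, so by the definition of Minkowski summation applied to $\mathbb{A}$ and $\mathbb{B}$, we conclude $x\in\mathbb{A}\oplus\mathbb{B}$. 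Since $x$ was arbitrary, this establishes $\mathbb{C}\oplus\mathbb{B}\subseteq\mathbb{A}\oplus\mathbb{B}$.

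There is essentially no obstacle here; the only thing to be careful about is strictly using the definition rather than any geometric intuition, so that the argument is valid for arbitrary sets $\mathbb{A}, \mathbb{B}, \mathbb{C}$ in $\mathbb{R}^n$ without assumptions such as convexity, boundedness, or non-emptiness (if $\mathbb{B}$ is empty, both sides are empty and the inclusion is vacuous; likewise if $\mathbb{C}$ is empty). I would keep the proof to two or three sentences and present it immediately after the theorem so that it can be cited in the subsequent stability analysis where monotonicity of $\oplus$ is repeatedly used, e.g., in combining \eqref{eq:2nd_tube}, \eqref{eq:FIS_tube}, and \eqref{eq:final_tube} to bound $\mathbb{E}_{i|k}$ by $\mathbb{E}^{\max}$.
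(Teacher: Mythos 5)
Your proposal is correct and is essentially identical to the paper's own proof: both take an arbitrary element $c+b$ of $\mathbb{C}\oplus\mathbb{B}$, use $\mathbb{C}\subseteq\mathbb{A}$ to conclude $c\in\mathbb{A}$, and then apply the definition of Minkowski addition in \eqref{eq:Minkowski} to place $c+b$ in $\mathbb{A}\oplus\mathbb{B}$. The remarks on empty sets and the absence of convexity assumptions are a harmless addition not present in the paper.
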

\begin{proof}
Each element in $\mathbb{C}\oplus\mathbb{B}$ is given by $c+b$, where $c\in\mathbb{C}$ and $b\in\mathbb{B}$. 
Since $\mathbb{C}\subseteq\mathbb{A}$, then $c\in\mathbb{A}$. 
Thus, from the definition of Minkowski addition \eqref{eq:Minkowski}, we have  $(c+b)\in\mathbb{A}\oplus\mathbb{B}$.
\end{proof}
\begin{theorem}
\label{th:min2}
If $\mathbb{C}\subseteq\mathbb{A}$ and $\mathbb{D}\subseteq\mathbb{B}$, then $\mathbb{C}\oplus\mathbb{D}\subseteq\mathbb{A}\oplus\mathbb{B}$.
\end{theorem}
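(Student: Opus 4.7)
The plan is to prove this as an immediate corollary of Theorem~\ref{th:min1}, which handles the case where only one of the two sets is replaced by a subset. Since the hypothesis of Theorem~\ref{th:min2} lets both sets shrink simultaneously, the natural strategy is to change only one side at a time and chain the resulting inclusions by transitivity.

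More concretely, I would first apply Theorem~\ref{th:min1} with the roles of $\mathbb{C}\subseteq\mathbb{A}$ and the ``passenger'' set $\mathbb{D}$ to obtain $\mathbb{C}\oplus\mathbb{D}\subseteq\mathbb{A}\oplus\mathbb{D}$. Then I would apply Theorem~\ref{th:min1} a second time, now using the hypothesis $\mathbb{D}\subseteq\mathbb{B}$ and the passenger set $\mathbb{A}$, which gives $\mathbb{A}\oplus\mathbb{D}\subseteq\mathbb{A}\oplus\mathbb{B}$. Combining the two inclusions by transitivity yields the desired conclusion $\mathbb{C}\oplus\mathbb{D}\subseteq\mathbb{A}\oplus\mathbb{B}$. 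The second application implicitly uses the commutativity of Minkowski addition (so that the passenger set may sit on either side), which is immediate from the defining expression in \eqref{eq:Minkowski}.

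Alternatively, one can give a self-contained one-line argument directly from the definition in \eqref{eq:Minkowski}: pick an arbitrary element of $\mathbb{C}\oplus\mathbb{D}$, write it as $c+d$ with $c\in\mathbb{C}$ and $d\in\mathbb{D}$, and observe that by the two inclusion hypotheses $c\in\mathbb{A}$ and $d\in\mathbb{B}$, so $c+d\in\mathbb{A}\oplus\mathbb{B}$. I would favor the first, chained-inclusion route since it showcases Theorem~\ref{th:min1} as a reusable tool. There is no real obstacle here; the only care needed is to make explicit that the passenger set in the second invocation appears on the left of $\oplus$, which is harmless because of the commutativity of $\oplus$.
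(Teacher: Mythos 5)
Your chained-inclusion argument is correct and is essentially the same as the paper's proof, which also applies Theorem~\ref{th:min1} twice and combines the results by transitivity (the paper passes through the intermediate set $\mathbb{C}\oplus\mathbb{B}$ rather than your $\mathbb{A}\oplus\mathbb{D}$, a trivially symmetric variant that also relies on the passenger set sitting on the left of $\oplus$). The direct element-wise argument you mention as an alternative is equally valid but is not the route the paper takes.
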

\begin{proof}
From the previous theorem, we have $\mathbb{C}\oplus\mathbb{B}\subseteq\mathbb{A}\oplus\mathbb{B}$ and  $\mathbb{C}\oplus\mathbb{D}\subseteq\mathbb{C}\oplus\mathbb{B}$, which together imply that  $\mathbb{C}\oplus\mathbb{D}\subseteq\mathbb{A}\oplus\mathbb{B}$.
\end{proof}
\begin{theorem}
\label{th:Emax}
    If $\mathbb{E}_{k}\subseteq\mathbb{E}\maxu$ and the system follows the \dtmpc\ policy $\pi(\cdot)$, then $\mathbb{E}_{i|k}\subseteq\ \mathbb{E}\maxu$ for 
    $i=k+1,\ldots,k+N$, where $\{z_{i|k}\}\oplus\mathbb{E}_{i|k}$ are the elements of the tube $\mathbb{T}_k$ of \dtmpc.
\end{theorem}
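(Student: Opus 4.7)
The plan is to prove the claim by induction on the prediction index $i$, using the monotonicity of the two building-block maps $\epsilon(\cdot)$ and $\text{FIS}(\cdot)$ together with the positive robust invariance assumption~\ref{as:tube}. The base case $\mathbb{E}_{k}\subseteq\mathbb{E}\maxu$ is given as a hypothesis of the theorem, so the work is entirely in the inductive step: assuming $\mathbb{E}_{i-1|k}\subseteq\mathbb{E}\maxu$, I will show that $\mathbb{E}_{i|k}\subseteq\mathbb{E}\maxu$.

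The inductive step would proceed in three small moves. First, because $\epsilon(\cdot)$ is a set-valued mapping in the sense of Definition~\ref{def:D1} (its action on a set is the union of its images on the elements), monotonicity gives $\Bar{\mathbb{E}}_{i|k}=\epsilon(\mathbb{E}_{i-1|k})\subseteq\epsilon(\mathbb{E}\maxu)$. Second, by the construction of the TSK-FIS in Section~\ref{sec:Fuzzy_System}, the approximate disturbance set is designed to satisfy $\Bar{\mathbb{W}}_{i-1|k}\subseteq\mathbb{W}\maxu$ for every admissible input state; hence $\text{FIS}\!\left(\{\bz_{i-1|k}\}\oplus\mathbb{E}_{i-1|k}\right)\subseteq\mathbb{W}\maxu$. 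Third, applying Theorem~\ref{th:min2} to these two inclusions yields
\begin{equation*}
\mathbb{E}_{i|k}=\Bar{\mathbb{E}}_{i|k}\oplus\Bar{\mathbb{W}}_{i-1|k}\subseteq\epsilon(\mathbb{E}\maxu)\oplus\mathbb{W}\maxu .
\end{equation*}
The conclusion then follows by invoking Assumption~\ref{as:tube}: positive robust invariance of $\mathbb{T}\maxu_{k}$ under the policy $\pi(\cdot)$ means precisely that the fixed cross section $\mathbb{E}\maxu$ is invariant under the composition of the autonomous error dynamics and the worst-case additive disturbance, i.e.\ $\epsilon(\mathbb{E}\maxu)\oplus\mathbb{W}\maxu\subseteq\mathbb{E}\maxu$, so $\mathbb{E}_{i|k}\subseteq\mathbb{E}\maxu$ as required.

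The main obstacle, and the one I would flag carefully when writing the proof, is the last step: unpacking Assumption~\ref{as:tube} into the clean set inclusion $\epsilon(\mathbb{E}\maxu)\oplus\mathbb{W}\maxu\subseteq\mathbb{E}\maxu$. The assumption is phrased in terms of states remaining inside the tube $\mathbb{T}\maxu_{k}$ under $\pi(\cdot)$, but the recursion \eqref{eq:2nd_tube}--\eqref{eq:final_tube} is phrased in terms of error sets. Translating between the two requires noting that $\epsilon(\cdot)$ is by definition the autonomous error evolution induced by the nominal dynamics $f(\cdot)$ together with the ancillary policy $\pi(\cdot)$, so that errors propagate exactly as $\be_{i}\in\epsilon(\{\be_{i-1}\})\oplus\{\bw(\bx_{i-1})\}$; once this identification is made, robust invariance of the tube is equivalent to invariance of $\mathbb{E}\maxu$ under the one-step reachable map of Definition~\ref{def:D2} with $\Bar{\mathbb{W}}_{i-1|k}$ replaced by $\mathbb{W}\maxu$. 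The remaining monotonicity arguments and the induction are routine applications of Theorems~\ref{th:min1} and~\ref{th:min2} and Definition~\ref{def:D1}.
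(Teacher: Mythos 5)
Your proposal is correct, but it takes a genuinely different route from the paper. The paper proves Theorem~\ref{th:Emax} by contradiction: it supposes some $\bm{e}_{i|k}\in\mathbb{E}_{i|k}$ lies outside $\mathbb{E}\maxu$, identifies that error with a possible realization of the state, and concludes that the state would then leave the tube $\mathbb{T}\maxu_k$, contradicting Assumption~\ref{as:tube}. You instead run a direct forward induction on the defining recursion \eqref{eq:2nd_tube}--\eqref{eq:final_tube}, using Definition~\ref{def:D1} and Theorems~\ref{th:min1}--\ref{th:min2} to get $\mathbb{E}_{i|k}\subseteq\epsilon(\mathbb{E}\maxu)\oplus\mathbb{W}\maxu$, and then close the loop with the inclusion $\epsilon(\mathbb{E}\maxu)\oplus\mathbb{W}\maxu\subseteq\mathbb{E}\maxu$ extracted from Assumption~\ref{as:tube}. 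Both arguments ultimately rest on the same reading of \ref{as:tube} as robust positive invariance of the fixed cross section $\mathbb{E}\maxu$ under the error dynamics induced by $f(\cdot)$ and $\pi(\cdot)$; you make that translation explicit (and correctly flag it as the only nontrivial step), whereas the paper makes it implicitly when it jumps from ``error outside $\mathbb{E}\maxu$'' to ``state leaves $\mathbb{T}\maxu$.'' What your version buys is that it works directly with the sets the optimization actually computes --- in particular it makes visible where the design condition $\Bar{\mathbb{W}}_{i-1|k}\subseteq\mathbb{W}\maxu$ from Section~\ref{sec:Fuzzy_System} enters, which the paper's contradiction argument needs but never cites --- at the cost of being longer; the paper's version is shorter but leaves the correspondence between elements of the computed error sets and realizable trajectories unformalized. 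No gap in your argument; it is, if anything, the more airtight of the two.
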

\begin{proof}
    By contradiction, if the theorem is not true, i.e., if there exists $\mathbb{E}_{i|k}$ 
    for $i = k+1, \ldots, k+N$, such that $\mathbb{E}_{i|k}\not\subseteq\mathbb{E}\maxu$ then there is at least one element 
    $\bm{e}_{i|k} = \bx_{i|k} - \bz_{i|k}$ corresponding to a possible realization 
    $\bx_{i|k}$ of the state at time step $i$, such that  $\bm{e}_{i|k} \in \mathbb{E}_{i|k}$, 
    but $\bm{e}_{i|k} \notin \mathbb{E}_{\max, i|k}$. 
    Therefore, for the corresponding external disturbances the state leaves tube $\mathbb{T}\maxu$,  
which is not possible because $\mathbb{T}\maxu$ is a robust positive invariant set for the policy $\pi(\cdot)$ according to assumption A1. 
\end{proof}
\begin{theorem}
\label{th:SRCI}
    The set $\Phi:=\{\{\bz_{i|k}\}\oplus \mathbb{E}_{i|k} 
    \big| \bz_{i|k}\in \mathbb{Z}_{\text{f}},\ \mathbb{E}_{i|k}\subseteq\mathbb{E}\maxu,\ i=k+1, \ldots, k+N\}$ 
    is a set robust positive invariant set for the \dtmpc\ inputs  $\pi(\bx_{i|k},\kappa(\bz_{i|k}),\bz_{i|k})$, for $i = k, \ldots , k+N-1$. (Note that according to \cite{TMPC} a set robust positive invariant set is defined as a set of sets. Since the terminal real state $x_{k|N+k}$ should be bounded, we need to constrain both the nominal state and the potential errors, thus a set of tubes (i.e., $\Phi$) is needed.)\\
\end{theorem}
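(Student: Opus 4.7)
My plan is to verify the set-invariance by taking an arbitrary element $S_{i|k} := \{\bz_{i|k}\}\oplus\mathbb{E}_{i|k} \in \Phi$ (so that $\bz_{i|k}\in\mathbb{Z}_{\text{f}}$ and $\mathbb{E}_{i|k}\subseteq \mathbb{E}\maxu$) and showing that, under the closed-loop input $\pi(\bx_{i|k},\kappa(\bz_{i|k}),\bz_{i|k})$ applied to every realisation $\bx_{i|k}\in S_{i|k}$, the one-step successor set $S_{i+1|k} := \{\bz_{i+1|k}\}\oplus\mathbb{E}_{i+1|k}$ obtained via the nominal dynamics \eqref{eq:nominal_system} and the three-step error recursion \eqref{eq:2nd_tube}--\eqref{eq:final_tube} again belongs to $\Phi$. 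This reduces the claim to checking three properties of $S_{i+1|k}$: (i) $\bz_{i+1|k} \in \mathbb{Z}_{\text{f}}$; (ii) $\mathbb{E}_{i+1|k}\subseteq \mathbb{E}\maxu$; and (iii) the applied input is admissible for every $\bx_{i|k}\in S_{i|k}$, with $S_{i+1|k}\subseteq \mathbb{X}$.

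For property (i), I would invoke the first bullet of Assumption~\ref{as:kappa}, which guarantees $f(\bz_{i|k},\kappa(\bz_{i|k}))\in\mathbb{Z}_{\text{f}}$; since $\bz_{i+1|k}$ is defined precisely by this recursion, the nominal component stays in the terminal set. For property (ii), I would directly apply Theorem~\ref{th:Emax}: given $\mathbb{E}_{i|k}\subseteq \mathbb{E}\maxu$ and the fact that the error set evolution under the \dtmpc\ policy is exactly the one analysed there, we obtain $\mathbb{E}_{i+1|k}\subseteq \mathbb{E}\maxu$. This is the only step that genuinely uses the dynamic structure of the tube, so it is where the previous theorem really earns its keep.

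For property (iii), I would first note that $S_{i|k} = \{\bz_{i|k}\}\oplus\mathbb{E}_{i|k} \subseteq \{\bz_{i|k}\}\oplus\mathbb{E}\maxu$ by Theorem~\ref{th:min1}, since $\mathbb{E}_{i|k}\subseteq\mathbb{E}\maxu$. The third bullet of Assumption~\ref{as:kappa} then yields $\pi(\bx_{i|k},\kappa(\bz_{i|k}),\bz_{i|k})\in\mathbb{U}$ for every $\bx_{i|k}\in\{\bz_{i|k}\}\oplus\mathbb{E}\maxu$, and therefore a fortiori for every $\bx_{i|k}\in S_{i|k}$. Feasibility of the successor, $S_{i+1|k}\subseteq\mathbb{X}$, then follows from Assumption~\ref{as:invariant}: since $\bz_{i+1|k}\in\mathbb{Z}_{\text{f}}$ and $\mathbb{E}_{i+1|k}\subseteq\mathbb{E}\maxu$, another application of Theorem~\ref{th:min1} gives $S_{i+1|k}\subseteq \mathbb{Z}_{\text{f}}\oplus\mathbb{E}\maxu\subseteq\mathbb{X}$. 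Iterating the argument for $i=k,\ldots,k+N-1$ then establishes that the whole forward trajectory of sets remains inside $\Phi$.

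The main obstacle is essentially conceptual rather than computational: one must be careful about the set-of-sets interpretation of ``set robust positive invariant,'' making sure that the ``state'' whose invariance is being checked is the tube cross-section $S_{i|k}$ (not a single vector $\bx_{i|k}$), and that its successor under the closed-loop \dtmpc\ law is well-defined by the three-step recursion \eqref{eq:2nd_tube}--\eqref{eq:final_tube} together with the nominal update under $\kappa$. Once this interpretation is made explicit, the proof is essentially a clean bookkeeping exercise that stitches together Assumption~\ref{as:tube}--\ref{as:kappa} and Theorems~\ref{th:min1} and \ref{th:Emax}, with no further technicalities required.
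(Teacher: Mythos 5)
Your proposal is correct and follows essentially the same route as the paper's proof: the first bullet of Assumption~\ref{as:kappa} keeps the nominal state in $\mathbb{Z}_{\text{f}}$, Theorem~\ref{th:Emax} keeps the error set inside $\mathbb{E}\maxu$, and membership in $\Phi$ follows from its definition. Your additional checks of input admissibility (third bullet of Assumption~\ref{as:kappa}) and of $S_{i+1|k}\subseteq\mathbb{X}$ via Assumption~\ref{as:invariant} are left implicit in the paper but are consistent with, and a useful elaboration of, the same argument.
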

\begin{proof}
    From assumption A3 it follows that if $\bz_k \in \mathbb{Z}_{\textrm{f}}$, then $\bz_{i|k} \in \mathbb{Z}_{\textrm{f}}$ for $i=k+1, \ldots,k+N$. 
    Moreover, from Theorem~\ref{th:Emax} if $\mathbb{E}_k \subseteq \mathbb{E}\maxu$, then $\mathbb{E}_{i|k}\subseteq\mathbb{E}\maxu$ for $i = k+1, \ldots, k+N$. 
    Therefore, from the definition of $\Phi$, we have $\{\bz_{i|k}\}\oplus\mathbb{E}_{i|k}\in\Phi$, 
    which proves the theorem.  
\end{proof}
\begin{theorem}
\label{th:feasbile}
If the sequence of nominal control inputs $\tilde{\bv}_k=[\bv^\top_{k|k},\bv^\top_{k+1|k},\dots,\bv^\top_{k+N-1|k}]^\top$ of \dtmpc\ is feasible for the admissible pair ($\bz_k$,$\bx_k$), i.e., using these inputs and  policy $\pi(\cdot)$, the controlled system satisfies $\bx_{i|k}\in\mathbb{X}$ for $i=k+1,\ldots,k+N$, 
then the shifted sequence of nominal control inputs  
$\tilde{\bv}_{k+1}=[\bv^\top_{k+1|k},\bv^\top_{k+2|k},\dots,\bv^\top_{k+N-1|k},\kappa^\top(\bz_{k+N|k})]^\top$ is feasible for the admissible pair $(\bz_{k+1},\bx_{k+1})$ under the \dtmpc\ policy $\pi(\cdot)$.%
\end{theorem}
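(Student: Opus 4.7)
The plan is to verify the constraints (\ref{eq:nominal_system})--(\ref{eq:policy_feasible}) of the \dtmpc\ problem at time $k+1$ using the shifted candidate $\tilde{\bv}_{k+1}$, by splitting the new prediction window $\{k+2,\ldots,k+N+1\}$ into an inherited part $\{k+2,\ldots,k+N\}$, where feasibility should carry over from time $k$, and a new terminal part at $k+N+1$, where feasibility must be obtained from Assumptions~\ref{as:invariant}--\ref{as:kappa} via the terminal controller $\kappa(\cdot)$.

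For the inherited part, since the nominal dynamics (\ref{eq:nominal_system}) are deterministic and $\bz_{k+1}=\bz_{k+1|k}$, applying $\bv_{k+j|k}$ at $\bz_{k+j|k}$ reproduces the same nominal trajectory $\bz_{k+2|k},\ldots,\bz_{k+N|k}$ that was computed at time $k$ and already satisfies (\ref{eq:feasible}) and (\ref{eq:terminal}). The error set is re-initialized via (\ref{eq:first_tube}) at the singleton $\{\bx_{k+1}-\bz_{k+1}\}$; since the closed-loop state satisfies $\bx_{k+1}\in\{\bz_{k+1|k}\}\oplus\mathbb{E}_{k+1|k}$ and $\mathbb{E}_{k+1|k}\subseteq\mathbb{E}\maxu$ by Theorem~\ref{th:Emax}, we have $\mathbb{E}_{k+1}\subseteq\mathbb{E}_{k+1|k}$. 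Induction on $j$, using the monotonicity of $\epsilon(\cdot)$ and $\text{FIS}(\cdot)$ under set inclusion from Definition~\ref{def:D1} together with Theorems~\ref{th:min1}--\ref{th:min2}, then yields $\mathbb{E}_{j|k+1}\subseteq\mathbb{E}_{j|k}$ for every $j=k+2,\ldots,k+N$. Consequently $\{\bz_{j|k+1}\}\oplus\mathbb{E}_{j|k+1}\subseteq\{\bz_{j|k}\}\oplus\mathbb{E}_{j|k}\subseteq\mathbb{X}$, verifying (\ref{eq:feasible}); and applying Definition~\ref{def:D1} to the policy, $\pi(\{\bz_{j|k+1}\}\oplus\mathbb{E}_{j|k+1},\bv_{j|k+1},\bz_{j|k+1})\subseteq\pi(\{\bz_{j|k}\}\oplus\mathbb{E}_{j|k},\bv_{j|k},\bz_{j|k})\subseteq\mathbb{U}$, verifying (\ref{eq:policy_feasible}).

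For the new terminal step $k+N+1$, setting $\bv_{k+N|k+1}=\kappa(\bz_{k+N|k})$ gives $\bz_{k+N+1|k+1}=f(\bz_{k+N|k},\kappa(\bz_{k+N|k}))\in\mathbb{Z}_\text{f}$ by item~1 of Assumption~\ref{as:kappa}, which establishes the terminal constraint (\ref{eq:terminal}). By Theorem~\ref{th:Emax} we also have $\mathbb{E}_{k+N+1|k+1}\subseteq\mathbb{E}\maxu$, and combining this with Assumption~\ref{as:invariant} yields $\{\bz_{k+N+1|k+1}\}\oplus\mathbb{E}_{k+N+1|k+1}\subseteq\mathbb{Z}_\text{f}\oplus\mathbb{E}\maxu\subseteq\mathbb{X}$, verifying (\ref{eq:feasible}) at the terminal index. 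Finally, item~3 of Assumption~\ref{as:kappa} applied at $\bz_{k+N|k}\in\mathbb{Z}_\text{f}$ gives $\pi(\bx_{k+N+1|k+1},\kappa(\bz_{k+N|k}),\bz_{k+N|k})\in\mathbb{U}$ for every $\bx_{k+N+1|k+1}\in\{\bz_{k+N+1|k+1}\}\oplus\mathbb{E}\maxu$, so (\ref{eq:policy_feasible}) also holds at the terminal step.

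The main obstacle I expect is establishing the inherited containment $\mathbb{E}_{j|k+1}\subseteq\mathbb{E}_{j|k}$ cleanly: the two error sequences are produced by the same recursion (\ref{eq:2nd_tube})--(\ref{eq:final_tube}), but from different initial sets, so the induction hinges on $\epsilon(\cdot)$ and the mapping $\text{FIS}(\cdot)$ being monotone with respect to set inclusion. Once Definition~\ref{def:D1} is invoked to secure this monotonicity and the Minkowski inclusions of Theorems~\ref{th:min1}--\ref{th:min2} are applied, the remaining verification is essentially bookkeeping.
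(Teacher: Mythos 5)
Your proposal is correct and follows essentially the same route as the paper's proof: shift the nominal inputs, use determinism of the nominal dynamics to reuse the nominal trajectory, observe that the re-initialized error singleton satisfies $\mathbb{E}_{k+1|k+1}\subseteq\mathbb{E}_{k+1|k}$, propagate this inclusion through \eqref{eq:2nd_tube}--\eqref{eq:final_tube} by induction using Definition~\ref{def:D1} and Theorems~\ref{th:min1}--\ref{th:min2}, and handle the appended terminal step via $\kappa(\cdot)$ and Assumptions~\ref{as:invariant}--\ref{as:kappa}. Your treatment of the terminal index and of constraint \eqref{eq:policy_feasible} is in fact slightly more explicit than the paper's (which routes these through Theorem~\ref{th:SRCI}), but the underlying argument is the same.
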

\begin{proof}
    The nominal system is deterministic, and at time step $k+1$ for the first $N-1$ time steps  the same nominal control inputs  as for time step $k$ are applied. Thus one can write $\bz_{i|k+1}=\bz_{i|k}$ for 
    $i = k+1,\dots, k+N $. 
    The error set $\mathbb{E}_{k+1|k}$ contains all the possible errors of $\bx_{k+1|k}$ 
    with respect to $\bz_{k+1|k}$. At time step $ k+1$, however, $\mathbb{E}_{k+1|k+1}$ contains only one element of $\mathbb{E}_{k+1|k}$ based on the realized state under the \dtmpc\ policy. 
    Thus, we have $\mathbb{E}_{k+1|k+1}\subseteq\mathbb{E}_{k+1|k}$. 

    Since $\bz_{k+1|k+1}=\bz_{k+1|k}$ and $\mathbb{E}_{k+1|k+1}\subseteq\mathbb{E}_{k+1|k}$, 
    from Theorem~\ref{th:min2} we have 
    $\{\bz_{k+1|k+1}\}\oplus\mathbb{E}_{k+1|k+1}\subseteq \{\bz_{k+1|k}\}\oplus\mathbb{E}_{k+1|k}$. 
    From \eqref{eq:FIS_tube}-\eqref{eq:final_tube}, the next error set is derived based on 
    a mapping from all elements of the current error set added to the nominal state. 
    Based on the last two statements, Definition~\ref{def:D1}, and Theorem~\ref{th:min2}, we  
    conclude that $\mathbb{E}_{k+2|k+1}\subseteq\mathbb{E}_{k+2|k}$.  
    Similarly, in an iterative way it can be shown that $\mathbb{E}_{i|k+1}\subseteq\mathbb{E}_{i|k}$ for 
    $i= k+1,\dots,k+N$. 
    
    The sequence $\tilde{\bv}_k$ is assumed to be feasible for the admissible pair ($\bz_k$,$\bx_k$). Thus the elements 
    $\{\bz_{i|k}\}\oplus\mathbb{E}_{i|k}$ for $i = k+1,\ldots,k+N-1$ 
    belong to the admissible state set. 
    Now since $z_{i|k+1}=z_{i|k}$ and $\mathbb{E}_{i|k+1}\subseteq\mathbb{E}_{i|k}$ for $i =k+1,\dots,k+N$, based on Theorem~\ref{th:SRCI}, 
    the elements $\{\bz_{i|k+1}\}\oplus\mathbb{E}_{i|k+1}$ for $i = k+1,\ldots,k+N-1 $ 
    belong to the admissible state set. 
    Finally, since at time step $k+N$ 
    the \dtmpc\ input is determined based on the nominal control input $\kappa(\bz_{k+N|k})$ 
    (or equivalently, based on $\kappa(\bz_{k+N|k+1})$),  
    and since $\mathbb{E}_{k+N|k+1}\subseteq\mathbb{E}_{k+N|k}\subseteq\mathbb{E}\maxu$, 
    from Theorem~\ref{th:SRCI} we conclude that $\{\bz_{k+N|k+1}\}\oplus\mathbb{E}_{k+N|k+1}$ 
    belongs to the admissible state set.
\end{proof}
\begin{theorem}
\label{th:optimal_V}
    The optimal cost function $V^*(\cdot)$ in \eqref{eq:cost} is a Lyapunov function for system \eqref{eq:system} 
    (which implies stability).
\end{theorem}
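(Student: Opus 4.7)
The plan is to verify the three standard Lyapunov conditions for $V^*$ --- lower bound, upper bound on the terminal set, and decrease along closed-loop trajectories --- following the template from classical MPC stability proofs (see \cite{MPCbook}), but being careful that the tube $\mathbb{T}_k$ is also an optimization variable of \eqref{eq:optimization}.

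First I would establish the lower bound. From \eqref{eq:cost} and the first item of Assumption~\ref{as:k}, $V^*(\bx_k,\bz_k)\geq l(\bz_k,\bv^*_{k|k})\geq \alpha_1(|\bz_k|)$, giving the required $\mathcal{K}_\infty$ lower bound in terms of the nominal state. For the upper bound on the terminal set $\mathbb{Z}_\text{f}$, I would use the candidate sequence $\bv_{i|k}=\kappa(\bz_{i|k})$ for $i=k,\ldots,k+N-1$ together with the first and second items of Assumption~\ref{as:kappa}; a telescoping sum combined with the second item of Assumption~\ref{as:k} then yields $V^*(\bx_k,\bz_k)\leq V_\text{f}(\bz_k)\leq \alpha_\text{f}(|\bz_k|)$ for $\bz_k\in\mathbb{Z}_\text{f}$.

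The core of the argument is the descent property. Starting from the optimal sequence $\tilde{\bv}^*_k$ at time $k$ with associated optimal nominal trajectory $\tilde{\bz}^*_k$ and optimal tube $\mathbb{T}^*_k$, I would form the shifted candidate $\tilde{\bv}_{k+1}=[\bv^{*\top}_{k+1|k},\ldots,\bv^{*\top}_{k+N-1|k},\kappa^\top(\bz^*_{k+N|k})]^\top$ at time $k+1$. By Theorem~\ref{th:feasbile} this sequence is feasible for the pair $(\bz_{k+1},\bx_{k+1})$, and by construction the corresponding shifted error sets satisfy $\mathbb{E}_{i|k+1}\subseteq\mathbb{E}_{i|k}$, so the induced tube is admissible and the resulting terminal nominal state lies in $\mathbb{Z}_\text{f}$ by the first item of Assumption~\ref{as:kappa}. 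Computing the associated cost and telescoping, the contribution of the appended terminal control is
\begin{equation*}
    l(\bz^*_{k+N|k},\kappa(\bz^*_{k+N|k}))+V_\text{f}(f(\bz^*_{k+N|k},\kappa(\bz^*_{k+N|k})))-V_\text{f}(\bz^*_{k+N|k})\leq 0
\end{equation*}
by the second item of Assumption~\ref{as:kappa}. Optimality of $V^*(\bx_{k+1},\bz_{k+1})$ over this feasible candidate then gives
\begin{equation*}
    V^*(\bx_{k+1},\bz_{k+1})-V^*(\bx_k,\bz_k)\leq -l(\bz^*_{k|k},\bv^*_{k|k})\leq -\alpha_1(|\bz_k|).
\end{equation*}
Combined with the two bounds above, this establishes that $V^*$ is a Lyapunov function for the nominal component and that $\bz_k\to 0$; since by Theorem~\ref{th:Emax} the realized state stays in $\{\bz_k\}\oplus\mathbb{E}\maxu$, the closed loop is stable in the sense of \cite{TMPC}.

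The delicate point, and the step I expect to require the most care, is the descent inequality in the \textbf{\dtmpc}-specific setting, because the tube $\mathbb{T}_{k+1}$ induced by the shifted sequence does not in general coincide with the truncation-plus-terminal-extension of $\mathbb{T}^*_k$; it is in fact a (weak) subset of it, as follows from \eqref{eq:FIS_tube}--\eqref{eq:final_tube} together with Definition~\ref{def:D1} and Theorem~\ref{th:min2}. I would therefore argue that this shrinking of the error sets can only relax constraint \eqref{eq:feasible} and \eqref{eq:policy_feasible}, so that the shifted sequence remains feasible and the usual cost telescoping is not affected (the stage and terminal costs depend only on $\bz_{i|k}$ and $\bv_{i|k}$, not on $\mathbb{E}_{i|k}$). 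Once this monotonicity of the error sets under the shift is in place, the rest of the argument mirrors the standard nominal MPC proof.
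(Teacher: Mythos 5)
Your proposal is correct and follows essentially the same route as the paper's own proof: lower bound from Assumption~\ref{as:k}, feasibility of the shifted input sequence via Theorem~\ref{th:feasbile}, and the telescoping descent inequality from the second item of Assumption~\ref{as:kappa}. Your treatment is in fact slightly more careful than the paper's in two places --- you derive the upper bound on $\mathbb{Z}_\text{f}$ via the terminal controller and $\alpha_\text{f}$ rather than appealing only to continuity and finiteness of the cost, and you explicitly note that the shifted tube is a subset of the original one so that the cost telescoping (which does not depend on $\mathbb{E}_{i|k}$) is unaffected --- but these are refinements of the same argument, not a different approach.
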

\begin{proof}
    Given assumption A4, the optimal cost function $V^*(\cdot)$ is lower bounded by $\alpha_1(\cdot)$. 
    Moreover, an upper bound can be found for $V^*(\cdot)$, since it is continuous and finite for $\bx, \bz\in\mathbb{X}$. 
    The optimal sequence of nominal control inputs corresponding to the optimal cost $V^*(\bx_k,\bz_k)$ 
    is given by $\boldsymbol{\bv}_k^*=[{\bv^*}^\top_{k|k},{\bv^*}^\top_{k+1|k},\dots,{\bv^*}^\top_{k+N-1|k}]^\top$. 
    Based on Theorem~\ref{th:feasbile}, for the next control time step the sequence $\tilde{\bv}_{k+1}=[{\bv^*}^\top_{k+1|k},{\bv^*}^\top_{k+2|k+1},\ldots,{\bv^*}^\top_{k+N-1|k},\kappa^\top(\bz_{N|k})]^\top$ 
    is feasible for the admissible pair $(\bz_{k+1},\bx_{k+1})$ under the \dtmpc\ policy $\pi(\cdot)$.      
    The difference between $V^*(\bx_k,\bz_k)$ and the cost under sequence $\tilde{\bv}_{k+1}$ 
    is $l(\bz_{k|k},\bv_{k|k})+V_{\text{f}}(\bz_{k+N|k})-l(\bz_{k+N|k+1},\kappa(\bz_{k+N|k+1}))-V_{\text{f}}(f(\bz_{k+N|k+1},\kappa(\bz_{k+N|k}))$. 
    From assumption A3, this difference is positive, implying that the optimal cost $V^*(\bx_{k}, \bz_k)$ 
    is larger than the cost under $\tilde{\bv}_{k+1}$, which itself is larger than or equal to the optimal cost $V^*(\bx_{k+1},\bz_{k+1})$. Therefore, we have $V^*(\bx_{k+1},\bz_{k+1})<V^*(\bx_{k},\bz_{k})$.
\end{proof}
\section{Case stud{ies}}
\label{sec:case_study}

{In this section, we provide the results of two numerical case studies that have been designed for a SaR robot: 
\begin{itemize}
    \item 
    \textbf{Case study 1:} A \dtmpc\ controller is developed for a linear model of the robot, which is impacted by state-dependent disturbances. 
    With this experiment we showcase the unique capability of \dtmpc, in comparison with MPC and TMPC, to provide robustness 
    with respect to the disturbances, while improving the feasibility and reducing the conservativeness. The codes for this experiment have been published in \cite{code1}.
    \item 
    \textbf{Case study 2:} To showcase the performance of \dtmpc\ for nonlinear systems, a \dtmpc\ controller is developed for a nonlinear reference tracking problem for the robot. This problem has previously been 
    addressed in \cite{UnicycleNTMPC} using nonlinear TMPC. While the original nonlinear TMPC solution suffers from a high rise time, our results indicate a decreased rise time using \dtmpc, which allows the robot to safely move with a speed higher than the speed that is allowed by TMPC, without violating the constraints. 
    The code has been published in \cite{code2}.
\end{itemize}}

\subsection{Case study 1}
{\subsubsection{Simulation setup}}
\label{sec:setup_simulations}

We simulate a holonomic ground robot, for which the kinematics is described by the following discrete-time state-space representation:
\begin{equation}
\label{eq:holonomic_robot}
    \bx_{k+1}=
    \begin{bmatrix}
        1&0&T_s&0\\0&1&0&T_s\\0&0&1&0\\0&0&0&1
    \end{bmatrix}\bx_k+\begin{bmatrix}
        0&0\\0&0\\T_s&0\\0&T_s
    \end{bmatrix}\bu_k+ 
    \begin{bmatrix}
    \bm{w}_k \\
    0_{2\times 1}
    \end{bmatrix}
\end{equation}
where the state $\bx_k=[p_{x,k} , p_{y,k} , \nu_{x,k} , \nu_{y,k} ]^{\top}$ 
and control input $\bm{u}_k=[a_{x,k} , a_{y,k} ]^{\top}$ vectors include, respectively, 
the position and velocity, and the acceleration of the robot in the $x$ and $y$ directions.  
The discretization sampling time, $T_s$, is $0.1~$s,  and $\bm{w}_k$ 
is the 2-dimensional vector of the external disturbances that influence the position 
of the robot. In this case study, we assume that the disturbance affects position only (a similar assumption was used in \cite{UnicycleNTMPC}).
The set $\mathbb{W}(\bx_k)$ of external disturbances when the state of the robot is $\bx_k$ 
is a two-dimensional ellipse, with its major parallel to the direction of the movement 
of the robot. The magnitude of the disturbances is nonlinearly dependent on the state 
of the robot, i.e.:
\begin{subequations}
\label{eq:ellipse}
\begin{equation}
    r\maxu(\bx_k)=0.202\sqrt{\nu_{x,k}^2+\nu_{y,k}^2}+r_{\min}(\bx_k)
\end{equation}
 \begin{equation}
    r_{\min}(\bx_k)=0.225 \beta^2(\bx_k) \sqrt[4]{\nu_{x,k}^2+\nu_{y,k}^2}
\end{equation}   
\end{subequations}
with $r\maxu(\bx_k)$ and $r_{\min}(\bx_k)$ the length of the, respectively, major and minor of the ellipse when the state of the robot is $\bx_k$. 
In SaR missions, the external disturbances may vary in different parts of the environment, where the parameter $\beta(\bx_k)$, an indication of the slipperiness of the ground, 
allows us to include this in the simulations. 
We assume that the robot has perfect knowledge of its own state and of $\beta(\bx_k)$, but has no information on \eqref{eq:ellipse}. 
This assumption is made to make the learning procedure faster for the case study. In realistic scenarios, however,  understanding how ground conditions affect system behaviour is challenging, but not impossible \cite{DARPA_2022_winners}.

The constraints on the position of the robot vary in different simulations, 
while the constraints on the inputs and the velocity of the robot are always the following:
\begin{equation}\label{eq:constraints}
    |\nu_{x,k}|<2\frac{\textrm{m}}{\textrm{s}},\ |\nu_{y,k}|<2\frac{\textrm{m}}{\textrm{s}},
    \ |a_{x,k}|<5\frac{\textrm{m}}{\textrm{s}^2},\ |a_{y,k}|<5\frac{\textrm{m}}{\textrm{s}^2}
\end{equation}

The Table \ref{tab:math41} contains all addational mathematical notations required for this case study.

\begin{table}[]\caption{The following mathematical notation is used in section 4.1:}\resizebox{\textwidth}{!}{
\label{tab:math41}

\begin{tabular}{llll}
\hline
\multicolumn{1}{|l|}{The notation}                                                                                                                                        & \multicolumn{1}{l|}{The explenation}                                                                                                                                      & \multicolumn{1}{l|}{The notation} & \multicolumn{1}{l|}{The explenation}                             \\ \hline
$[A^\text{inequ}_{n_1\times n_2},b^\text{inequ}_{n_1\times 1}]$& A pair of matrices used to describe polytope in $n_2$ dimensions with $n_1$ inequalities                                                                                  & $Q_K$                             & Stage cost matrix of states of the ancillary control law         \\
$a_{x,k}$                                                                                                                                                                 & The current acceleration component parallel to x-axis                                                                                                                     & $Q_\kappa$                        & Stage cost matrix of states of the terminal control law          \\
$a_{y,k}$                                                                                                                                                                 & The current acceleration component parallel to y-axis                                                                                                                     & $R$                               & Stage cost matrix of inputs of the original cost function        \\
$c_{n,m}$                                                                                                                                                                 & \begin{tabular}[c]{@{}l@{}}
A constant parameter describing an ellipsoidal set of disturbances \\ where $n$ is an order number and $m$ denotes to the major or minor axis \end{tabular} & $R_K$                             & Stage cost matrix of inputs of the ancillary control law         \\
$F$                                                                                                                                                                       & Terminal cost matrix                                                                                                                                                      & $R_\kappa$                        & Stage cost matrix of inputs of the terminal control law          \\
$K$                                                                                                                                                                       & A state feedback matrix used in LQR                                                                                                                                       & $r\maxu(\bx_k)$                 & The major of the ellipsoidal disturbance set given current state \\
$m$                                                                                                                                                                         & meters                                                                                                                                                                    & $r_{\min}(\bx_k)$                 & The minor of the ellipsoidal disturbance set given current state \\
$\frac{\text{m}}{\text{s}}$                                                                                                                                               & meters per second                                                                                                                                                         & $T_s$                             & Sampling time                                                    \\
$\frac{\text{m}}{\text{s}^2}$                                                                                                                                             & meters per second square                                                                                                                                                  & $\bu_k$                           & The current velocity                                             \\
$p_{x,k}$                                                                                                                                                                 & The current position component parallel to x-axis                                                                                                                         & $\nu_{x,k}$                       & The current velocity component parallel to x-axis                \\
$p_{y,k}$                                                                                                                                                                 & The current position component parallel to y-axis                                                                                                                         & $\nu_{y,k}$                       & The current velocity component parallel to y-axis                \\
$Q$                                                                                                                                                                       & Stage cost matrix of states of the original cost function   
& 
$I_{n\times n}$                    & Identity matrix with dimension $n$  \\
\hline
\end{tabular}}
\end{table}

{\subsubsection{Formulating and implementing linear \dtmpc\ for path planning of a SaR robot}}

For path planning of the SaR robot in various simulated environments with external disturbances, we 
formulate an \dtmpc\ problem with a quadratic cost function given by:
\begin{equation}
\label{eq:nominal_quadratic_cost}
V(\bz_k,\bv_k) = \sum_{i=k}^{k+N-1}\left(\norm{\bz_{i|k}}_Q^2+\norm{\bm{v}_{i|k}}_R^2\right)+\norm{\bz_{k+N|k}}_F^2    
\end{equation}
where a prediction horizon of $N=5$ and the following cost matrices are considered:
\[Q=\text{diag}(100,100,1,1),  
\quad R=I_{2\times 2},\]
\[F=\begin{bmatrix} 805&0&-571&0\\0&805&0&-571\\-571&0&655&0\\0&-571&0&655 \end{bmatrix}\]
The most common choice for the ancillary control law is linear state feedback, such that 
the control input at time step $k$ is given by:
\begin{align}
\label{eq:linear_feedback}
    \bu_k=K\bm{e}_k+\bv_k,\ K=\begin{bmatrix}
        7,98&0&4.42&0\\0&7,98&0&4.42
    \end{bmatrix}
\end{align}
where $\bm{e}_k=\bx_k - \bz_k$, and for determining $K$ we use {linear quadratic regulator (LQR)} method 
with $Q_K=\text{diag}(100,100,0.1,0.1)$ and $ R_K=I_{2\times 2}$. 
These matrices are tuned, such that the ancillary control law more aggressively reduces the position. 
Consequently, the influence of the state-dependent external disturbances is more significantly reduced. 
The error $\bm{e}_k$ evolves according to:
\begin{equation}
\label{eq:linear_error}
    \bm{e}_{k+1} = (A+BK)\bm{e}_k+\begin{bmatrix}\bm{w}^{\top}_k | 0_{1\times 2}\end{bmatrix}^{\top}
\end{equation}
with $A$ and $B$ the $4\times 4$ and $4\times 2$ dynamic and input matrices given in \eqref{eq:holonomic_robot}. 
Thus the equivalent of \eqref{eq:2nd_tube} and \eqref{eq:policy_feasible} for this linear case, for $i=k,\ldots,k+N-1$, are respectively:
\begin{equation}
\label{eq:linear_shape}
    \Bar{\mathbb{E}}_{i+1|k} = (A+BK)\mathbb{E}_{i|k}
\end{equation}
\begin{equation}
\label{eq:lin_policy_feasible}
    K\mathbb{E}_{i|k}\oplus\{\bv_{i|k}\} \subseteq \mathbb{U}
\end{equation}
See Definition~\ref{def:D1} for how to compute \eqref{eq:linear_shape} and \eqref{eq:lin_policy_feasible}. 
Moreover, in order to reduce the computation time and because the external 
disturbance set changes slowly with the states in the proposed scenarios, we simplified \eqref{eq:FIS_tube}, such that it only 
takes into account the centroid of the tubes: 
\begin{equation}
\label{eq:simplifcation}
    \Bar{\mathbb{W}}_{i|k} = \text{FIS}\left(\bz_{i|k}+(A+BK)^{i-k}\boldsymbol{e}_k\right)
\end{equation}

The terminal admissible set $\mathbb{Z}_{\text{f}}$ for the nominal state is a positive invariant 
polytope, which is determined by assuming that the system follows an LQR control law $\kappa(\cdot)$ beyond the prediction horizon,   with $Q_{\kappa}
=\text{diag}(10,10,1,1)$ and $R_{\kappa}=I_{2\times 1}$. 
Then the terminal cost in \eqref{eq:nominal_quadratic_cost} is the limit of the cost when the system follows the LQR control  
law $\kappa(\cdot)$, when $k \rightarrow \infty$, and can be determined by solving the following Riccati equation,  using, e.g., Matlab dlyap command \cite{dlyap}: 

\begin{equation}\label{eq:ricatti}
    F = 2((A+BK)^TF(A+BK)+Q_{\kappa}+K^TR_{\kappa}K)
\end{equation}
For the details on the derivation of \eqref{eq:ricatti}), we refer the reader to  \cite{MPCbook}. 
Control  policy  $\kappa(\cdot)$ should be designed, such that Assumption~\ref{as:kappa} holds.

{\subsubsection{Learning the external disturbance sets using a FIS}}

In order to approximate $\mathbb{W}(\bx_k)$ by $\Bar{\mathbb{W}}_k$, 
polyhedral sets defined by {$\{[x,y]^\top:A^{\textrm{ineq}}_{8\times2}[x,y]^\top
\leq \bm{b}^{\textrm{ineq}}_{8\times1}\}$} are considered. The advantage of using polytope sets is that, unlike ellipses, the Minkowski addition can be computed for such sets without approximation and the shape of the set remains a polytope after using Minkowski additions. 
Moreover, with polytope sets real-life disturbance sets of different shapes can be approximated. 
The main downside of using polytope sets is that the problem can become computationally expensive for large  dimensions. In such cases, ellipsoids can be used instead. 
Thus, in order to simplify the fuzzy inference systems that should estimate $\Bar{\mathbb{W}}_k$, 
we consider two TSK FISs for approximating the major and minor (i.e., \eqref{eq:ellipse}) of the disturbance sets per time step $k$, with their output functions defined by:
\begin{align}
\label{eq:quad}
    r_m(\bx_k)=c_{1,m}\nu_k^2+c_{2,m}\beta^2(\bx_k)+
    c_{3,m}\nu_k\beta(\bx_k)+c_{4,m}\nu_k+c_{5,m}\beta(\bx_k)+ c_{6,m} 
\end{align}
with $\nu_k^2 = \nu_{x,k}^2 + \nu_{y,k}^2$, $m\in\{\max,\min\}$, and 
$\bm{\theta}_{m,k}=[c_{1,m}\ \ldots\ c_{6,m}]^\top$. 
A polytope with a fixed number of edges that encounters the resulting ellipse 
is then determined and used as the disturbance set at time step $k$.

We generated $1240$ input-output pairs, based on the ground truth ellipsoids that were designed for the case study (see \eqref{eq:ellipse}). 
This data set was divided into a training and a validation set, with a proportion of $660:580$. 
For the test set, we generated a large data set with $100000$ pairs of input-output using \eqref{eq:ellipse} in order 
to extensively test the trained FISs. 
Given the training and the validation errors, the best results were achieved using 
$5$ fuzzy sets per input, which resulted in $25$ fuzzy rules with  outputs that are described by \eqref{eq:quad}. 
Note that compared to different expressions for the output of the rules, the expression given by \eqref{eq:quad} 
did not result in under or over-fitting.
GA was used to train the FISs by minimizing the mean square error, 
with an additional penalty for negative errors. 

\begin{table*}
\caption{A summary of the results for the trained FIS that estimate the lengths of the major and minor of the disturbance ellipse sets using the test data, where the errors are normalized via dividing by the maximum ground truth value.}
\label{tab:FIS_results}
\centering
\begin{tabular}{|l|l|l|}
\hline
&
FIS that estimates the major & 
FIS that estimates the minor
\\ \hline
Number (in \%) of cases where errors were negative & $2.64$ & $3.01$ \\ \hline
Mean value of negative errors & $7.8\times10^{-4}$ & $1.93\times10^{-3}$ \\ \hline 
Mean value of positive errors & $1.02\times 10^{-2}$ & $2.32\times 10^{-2}$ \\ \hline 
Maximum value of negative errors &  $5.89\times10^{-3}$ & $2.56\times 10^{-2}$ \\ \hline 
Maximum value of positive errors & $5.94\times 10^{-2}$ & $2.45\times 10^{-1}$ \\ \hline
\end{tabular}
\end{table*}
\begin{figure}
    \centering
    \includegraphics[width=1\textwidth]{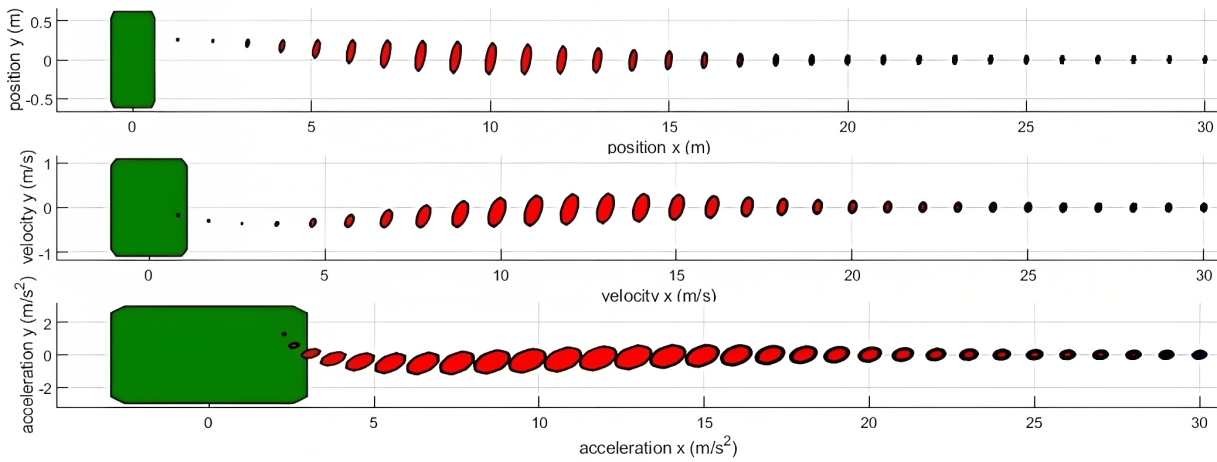}
    \caption{The projections of the tube of TMPC (shown in green) and the tubes of \dtmpc\ generated by the FIS 
    (shown in blue), compared with the ground truth admissible sets of disturbances 
    (shown in red), for a prediction horizon of $30$. To better illustrate the time evolution of the shape and size of the tubes, the projections, on the first two plots, were translated in such a way that nominal position and velocity are ordered as a horizontal line i.e $z_{i|0}=[i,0,i,0]^\top$. On the last plot, the projections were translated in such a way that $Kz_{i|0}=[i,0]^\top$}.
    \label{fig:err}
\end{figure}

The results obtained for the test set for the two FIS that estimate the lengths of the major and minor of the ellipse sets for the external disturbances are summarized in Table~\ref{tab:FIS_results}, 
where the error values have been normalized. 
In general, both FIS was able to approximate the ground truth with mostly positive-valued errors. 
Negative-valued errors could completely be eliminated by including {penalties} for small 
positive errors, in addition to the negative ones. This, however, may increase the overall error, 
and our simulation results showed that it is not necessary. 

In \dtmpc, the tube changes over time. Thus, we checked for a sample example  
of how the tube evolves during the optimization when a larger horizon of $30$ is used. 
Since the tube is $4$-dimensional, it is impossible to present it in one plot. 
Thus, Figure~\ref{fig:err} shows the projection of the tube on the planes of positions and velocities. Additionally,
the tube multiplied by the ancillary control gain $K$ is shown that is basically the tube of control input i.e. acceleration.
All these plots also show the projection of the tube $\mathbb{T}$ of TMPC that was designed based on \cite{Calculting_tube}. 
The tube of \dtmpc\ evolves until there is convergence in all dimensions of the state,  
whereas the convergence is never close to the tube of TMPC. 
The evolution of the tube can result in both the growth and shrinkage of the tube. 
For instance, once the system approaches the origin and slows down, the error 
sets become smaller as a result of the system slowing down. 
What is also important, we plotted both tubes generated while \dtmpc\ was using FIS and ground truth functions and it can be observed that they are almost overlapping. It shows that the current approximation is sufficient. 

{\subsubsection{Comparison: MPC, TMPC, \dtmpc}}
\label{sec:case_study_scenarios}

In all the given simulations, regular MPC (formulated by \eqref{eq:MPC}), TMPC (formulated by \eqref{eq:TMPC}), and \dtmpc\ (formulated by \eqref{eq:SDDTMPC}) were   
implemented and their performance, with respect to minimizing the cost 
function, satisfying the hard constraints, and computation time 
{were} compared. { $\ominus$ symbolizes a Minkowski  difference.}
The controllers were implemented in MATLAB version 2022b  
running on a PC with a 11th Gen Intel(R) Core(TM) i7-1185G7 CPU 
with a clock speed of 3 GHz.
In order to solve the optimization problems, the quadratic programming solver (Matlab's Optimization toolbox) 
was used for MPC and TMPC, whereas for \dtmpc, the particle swarm (PS) algorithm {\cite{ParticleSwarm}}
from the Global optimization toolbox of Matlab was implemented. In this toolbox, only PS and GA could consistently 
minimize the cost function, whereas PS usually converged faster. 
Unless stated otherwise, the optimization procedure for SDD-TMPC
was terminated after 60 iterations which took about 4 minutes to run, which has been shown in the simulations to 
provide a balanced trade-off between the computation time and the accuracy of the solutions.
{In comparison, MPC and TMPC  solved their quadratic problems in milliseconds. }
The initial nominal state was assumed to be equal to the real state, $\beta(\bx_k)$ was assumed to be $1$, and the horizon was assumed to be 5 unless otherwise stated. Terminal constraint and cost were not used in simulations 2 (possible, but the horizon would be too high, which would slow down the computation too much) and 3 (impossible, because the goal is outside the feasible set, so there is no way to design the terminal set).

\begin{subequations}\label{eq:MPC}
    \begin{equation}
     V^*(\bx_k) = \min_{\Tilde{\bx}_k,\Tilde{\bu}_k}\sum_{i=k}^{k+N-1}\left(\norm{\bx_{i|k}}_Q^2+\norm{\bm{u}_{i|k}}_R^2\right)+\norm{\bx_{k+N|k}}_F^2 
\end{equation}
\begin{equation*}
     \text{ s.t. for $i = k+1, \ldots, k+N$: }
\end{equation*}
\begin{equation}
    \bx_{i|k}=A\bx_{i-1|k}+B\bu_{i-1|k}  
\end{equation}
\begin{equation}
     \bx_{i|k}\in \mathbb{X}, \; \bu_{i|k} \in \mathbb{U}, \; \bx_{N|k} \in \mathbb{Z}_\text{f}
\end{equation}
\end{subequations}

\begin{subequations}\label{eq:TMPC}
    \begin{equation}
     V^*(\bz_k) = \min_{\Tilde{\bz}_k,\Tilde{\bv}_k}\sum_{i=k}^{k+N-1}\left(\norm{\bz_{i|k}}_Q^2+\norm{\bm{v}_{i|k}}_R^2\right)+\norm{\bz_{k+N|k}}_F^2 
\end{equation}
\begin{equation*}
     \text{ s.t. for $i = k+1, \ldots, k+N$: }
\end{equation*}
\begin{equation}
    \bz_{i|k}=A\bz_{i-1|k}+B\bv_{i-1|k}  
\end{equation}
\begin{equation}
     \bz_{i|k}\in \mathbb{X}\ominus\mathbb{E}_{\text{max}}, \; \bv_{i|k} \in \mathbb{U}\ominus\mathbb{E}_{\text{max}}, \; \bz_{N|k} \in \mathbb{Z}_\text{f}
\end{equation}
\end{subequations}

\begin{subequations}
\label{eq:SDDTMPC}
\begin{equation}
     V^*(\bx_k,\bz_k) = \min_{\Tilde{\bz}_k,\Tilde{\bv}_k,\mathbb{T}_k}\sum_{i=k}^{k+N-1}\left(\norm{\bz_{i|k}}_Q^2+\norm{\bm{v}_{i|k}}_R^2\right)+\norm{\bz_{k+N|k}}_F^2 
\end{equation}
\begin{equation*}
     \text{ s.t. for $i = k+1, \ldots, k+N$: }
\end{equation*}
\begin{equation}
    \bz_{i|k}=A\bz_{i-1|k}+B\bv_{i-1|k}  
\end{equation}
\begin{equation}
  \label{eq:tubeTk_lin}
     \mathbb{T}_k = \{\{z_{k+1|k}\}\oplus\mathbb{E}_{k+1|k}, \ldots, \{z_{N|k}\}\oplus\mathbb{E}_{k+N|k} \}    
 \end{equation}
\begin{equation}
    \Bar{\mathbb{E}}_{i|k} =(A+BK)\mathbb{E}_{i-1|k}
\end{equation}
\begin{equation}
    \Bar{\mathbb{W}}_{i-1|k} = \text{FIS}\left(\bz_{i|k}+(A+BK)^{i-k}(\boldsymbol{x}_k-\boldsymbol{z}_k)\right)
\end{equation}
\begin{equation}
    \mathbb{E}_{{i|k}} = \Bar{\mathbb{E}}_{i|k}\oplus\Bar{\mathbb{W}}_{i-1|k}
\end{equation}
\begin{equation}
   \mathbb{E}_k=\{\bx_k-\bz_k\}
\end{equation}
\begin{equation}
     \{\bz_{i|k}\}\oplus\mathbb{E}_{i|k} \subseteq \mathbb{X} 
\end{equation}
\begin{equation}
    K\mathbb{E}_{i|k}\oplus\{\bv_{i|k}\} \subseteq \mathbb{U}
\end{equation}
\begin{equation}
  \bz_{N|k} \in \mathbb{Z}_\text{f}
\end{equation}
\end{subequations}

To compare the convergence of the optimization algorithms, a sample problem was considered where the robot should reach the zero state starting from the state $[0.35,0.65,0,0]^\top$.  The table~\ref{table:convergence} shows the optimal costs for MPC, TMPC, and \dtmpc.    Since \dtmpc\ solves a nonconvex, nonlinear optimization problem, it may not be possible to find a global minimum. Therefore, the changes in the value of the cost by iteration of the PS algorithm are given in the table (the number of iterations is given in parentheses). The rate of these changes depends strongly on how constrained the problem is (compare the third and sixth rows of the table).  As a warm start for \dtmpc, the solution of TMPC is used, and in case TMPC is infeasible, the shifted trajectory from Theorem~\ref{th:feasbile} is used.  As expected, the cost values corresponding to TMPC and MPC, respectively, are  upper and lower bounds for the costs of \dtmpc.

\begin{table}
\caption{Comparing the cost values for MPC, TMPC, and different numbers of iterations of PS,  
which solves \dtmpc\ when the robot should reach the zero state from  
state $[0.35,0.65,0,0]^\top$ (no stage position constraints).}. 
\label{table:convergence}
\begin{tabular}{|lllllll|}
\hline
\multicolumn{7}{|c|}{Without terminal constraint and terminal cost} \\ \hline
\multicolumn{1}{|l|}{MPC} & \multicolumn{1}{l|}{TMPC} & \multicolumn{1}{l|}{PS(40)} & \multicolumn{1}{l|}{PS(80)} & \multicolumn{1}{l|}{PS(120)} & \multicolumn{1}{l|}{PS(160)} & PS(200) \\ \hline
\multicolumn{1}{|l|}{339.23} & \multicolumn{1}{l|}{379.76} & \multicolumn{1}{l|}{340} & \multicolumn{1}{l|}{339.6} & \multicolumn{1}{l|}{339.6} & \multicolumn{1}{l|}{339.6} & 339.6 \\ \hline
\multicolumn{7}{|c|}{With terminal constraint and terminal cost} \\ \hline
\multicolumn{1}{|l|}{MPC} & \multicolumn{1}{l|}{TMPC} & \multicolumn{1}{l|}{PS(40)} & \multicolumn{1}{l|}{PS(80)} & \multicolumn{1}{l|}{PS(120)} & \multicolumn{1}{l|}{PS(160)} & PS(200) \\ \hline
\multicolumn{1}{|l|}{402.91} & \multicolumn{1}{l|}{781.32} & \multicolumn{1}{l|}{667.05} & \multicolumn{1}{l|}{597.10} & \multicolumn{1}{l|}{540.48} & \multicolumn{1}{l|}{531.28} & 528.31 \\ \hline
\end{tabular}
\end{table}

The following $4$ scenarios were designed and simulated for situations 
relevant to SaR missions, each  
assessing one key property of \dtmpc, compared to MPC and TMPC. 
The scenarios were simulated several times to show the behavior of the controllers under different disturbances. 

{\paragraph{Scenario 1: Closely following a reference path}}

The main aim of the comparison among MPC, TMPC, and \dtmpc\ in scenario~1 is to 
assess how fast a robot that is controlled by each of these approaches 
reaches a given destination, following a reference path (i.e., an ordered set of given positions). 
As soon as the (nominal) state of the robot reaches a distance of $0.3~$m from its corresponding reference position, 
the next reference position from the path is sent to the controller as a new reference.  
Scenario~1 resembles a real-life SaR situation when a robot should precisely travel 
along a given path in {the} presence of external disturbances (e.g., to avoid hazards that exist in the close vicinity 
of the robot) that is determined via a higher-level controller. 
Scenario~1 was simulated 3 times, considering the following situations:
\begin{enumerate}
    \item The system was not affected by external disturbances.
    \item The external disturbances helped in attracting the system to the current reference position.
    \item The external disturbances resulted in repelling the system from the current reference position. 
\end{enumerate}
The entire mission time of the robot for scenario~1 for the 3 MPC-based controllers 
is represented in Table \ref{tab:s1}. 
Based on the results, for  MPC the mission time is strongly affected by 
the external disturbances (see the significant variations in the mission time {in} different cases), 
whereas TMPC and \dtmpc\  provide more consistent results independent of the realized disturbances. 
TMPC needs more time than \dtmpc\  to finish the mission. In fact, to keep the states  
within the tube of TMPC (determined for the worst disturbance case), the nominal acceleration and speed 
of the robot have to remain within $60\%$ and $50\%$ of their maximum allowed values.
\begin{table}[]
\caption{The mission time required by the robot in scenario~1  to closely track the entire 
reference path.}
\label{tab:s1}
\begin{tabular}{|l|lll|lll|}
\hline
 & \multicolumn{3}{c|}{Mission time {[}s{]} without terminal cost and constraint} & \multicolumn{3}{l|}{Mission time {[}s{]} with terminal cost and constraint} \\ \hline
 & \multicolumn{1}{l|}{MPC} & \multicolumn{1}{l|}{TMPC} & SDD-TMPC & \multicolumn{1}{l|}{MPC} & \multicolumn{1}{l|}{TMPC} & SDDT-MPC \\ \hline
Case 1 & \multicolumn{1}{l|}{10.8} & \multicolumn{1}{l|}{12.9} & 10.7 & \multicolumn{1}{l|}{12.5} & \multicolumn{1}{l|}{13.3} & 11.5 \\ \hline
Case 2 & \multicolumn{1}{l|}{7.8} & \multicolumn{1}{l|}{12.9} & 10.7 & \multicolumn{1}{l|}{8.6} & \multicolumn{1}{l|}{13.3} & 11.3 \\ \hline
Case 3 & \multicolumn{1}{l|}{18} & \multicolumn{1}{l|}{12.9} & 10.7 & \multicolumn{1}{l|}{19.8} & \multicolumn{1}{l|}{13.3} & 11.4 \\ \hline
\end{tabular}
\end{table}

{\paragraph{Scenario 2: Mission planning in the vicinity of obstacles}}

SaR robots should often move close to obstacles. 
In such cases, the trajectory of the robot must guarantee that no crashes occur 
with the obstacles. However, overly conservative decisions may significantly slow down the mission. 
In scenario~2, the robot has to move horizontally, from position $[2,0.01]^\top$ to position $[8.5,0.01]^\top$.
We suppose that if the vertical position of the robot falls below $p_y=0$, a crash with an obstacle (e.g., a wall 
that is extended across $p_y=0$) has occurred. 
Moreover, the value of $\beta(\bx_k)$ changes with the coordinates based on the following equation: 
\begin{equation}
\label{eq:s2b}
    \beta(\bx_k)=\frac{1}{|5-p_{x,k}|+1}
\end{equation}
Scenario~2 was simulated 3 times, considering the following situations:
\begin{enumerate}
    \item The system was not affected by external disturbances.
    \item The system was pushed upwards by external disturbances. 
    \item The system was pushed downwards by external disturbances.
\end{enumerate}
The results of the simulations for scenario~2 when 
MPC, TMPC, and \dtmpc\ are used to steer the robot, are  shown  in Figures~\ref{fig:s2}. 
 MPC steers the robot such that it moves close to the wall. However, for the third simulation when the system 
 was pushed downwards due to external disturbances, the robot collided into the wall.  
 Based on Figure~\ref{fig:s2}, TMPC assures a safe, collision-free trajectory for the robot,  
 while the robot moves relatively far away from the wall. 
 The initial nominal state of the robot with TMPC was far away from the wall, otherwise the initial 
 tube of TMPC would collide with the wall.
 Figure~\ref{fig:s2} also shows the nominal and the realized trajectories of the robot when it is  
 controlled via \dtmpc. While the robot moves much closer to the wall compared to TMPC,  
 it never crashes into it. 
 Note that from \eqref{eq:optimization}, the nominal states of \dtmpc\ 
 are determined based on an estimation of the external disturbances. 
Thus, the nominal trajectories vary when the disturbances do. In other words, the more slippery the floor, the more careful the actions chosen by \dtmpc.
\begin{figure}
    \centering
    \includegraphics[width=0.9\textwidth]{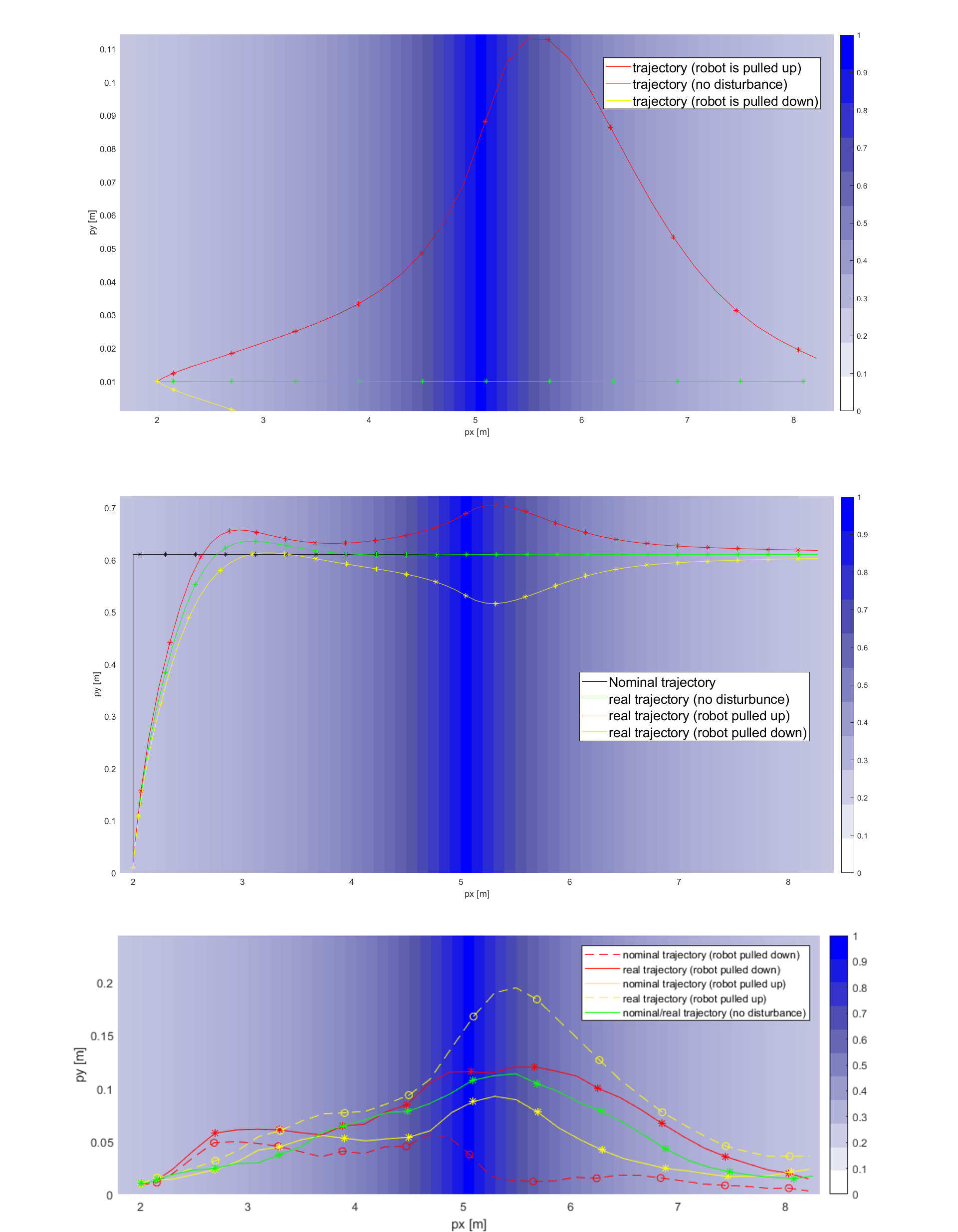}
    \caption{Scenario~2: Trajectories of the robot position when controlled by MPC (top plot), TMPC (middle plot) and \dtmpc (bottom plot), in case 1 (in green), 2 (in red), and 3 (in yellow). Note that the spectrum of blue corresponds to the values of $\beta(\bx_k)$ for all realized states $\bx_k$ of the robot during the simulation. In the middle plot, the black trajectory corresponds to the nominal case. 
    For the bottom plot, the corresponding nominal and real trajectories are shown in the same color but with solid and dashed lines respectively.}
    \label{fig:s2}
\end{figure}

\begin{figure}
    \centering
    \includegraphics[width=1\textwidth]{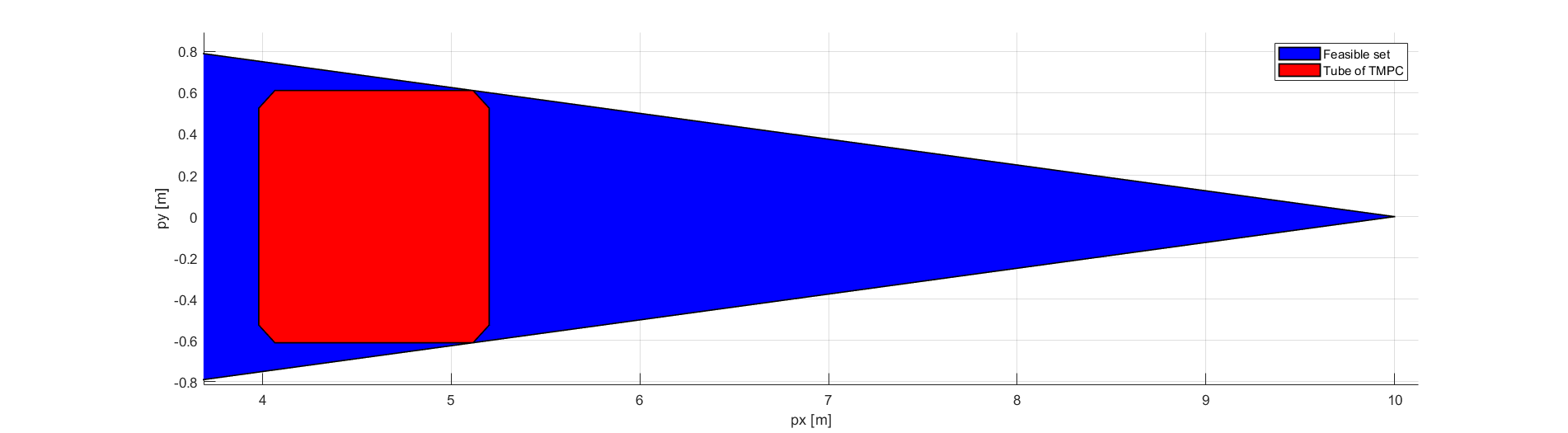}
    \caption{Scenario~3: The feasible state set $\mathbb{X}$ (in blue) and the tube of TMPC (in red).}
    \label{fig:3_Sets}
\end{figure}

\begin{figure}
    \centering
    \includegraphics[width=0.85\textwidth]{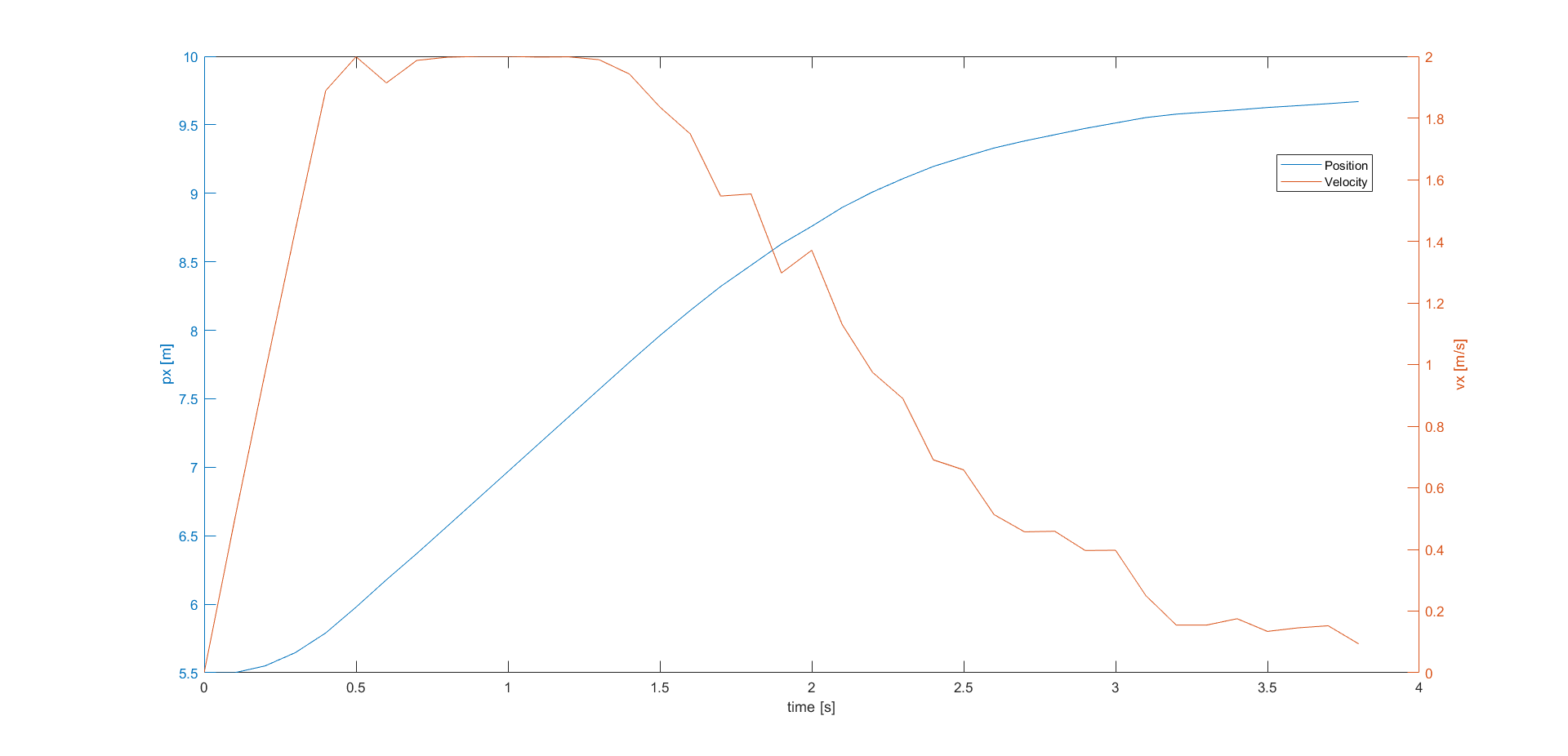}
    \caption{Scenario~3: The evolution of the position $p_x$ (blue) and the velocity $\nu_x$ (orange) of the robot, when \dtmpc\ is used and no external disturbances exist.}
    \label{fig:3_evolution}
\end{figure}

\begin{figure}
    \centering
    \includegraphics[width=0.85\textwidth]{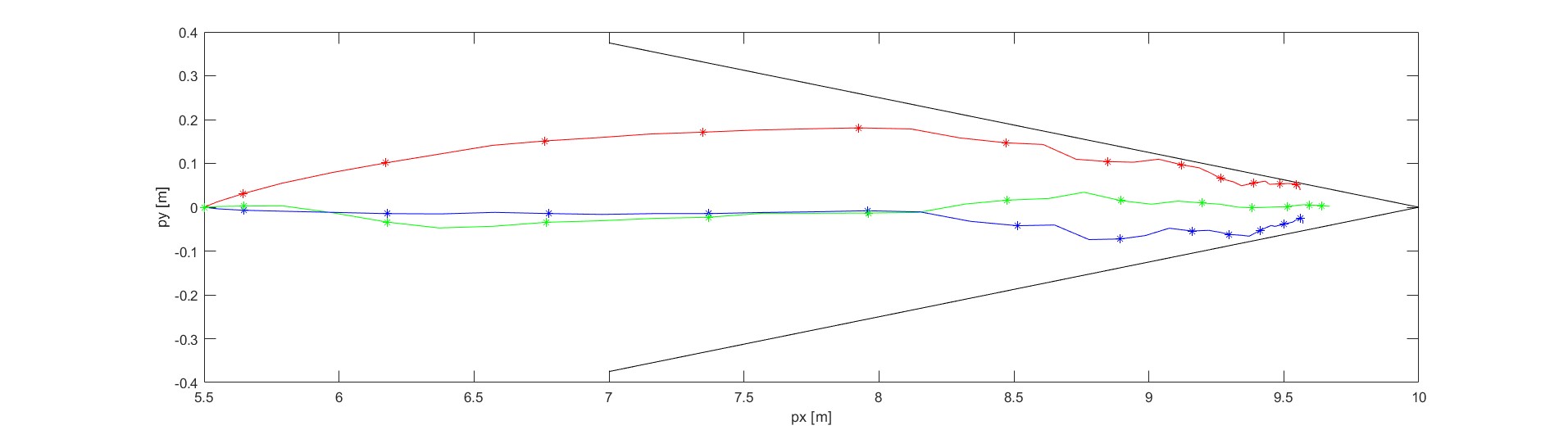}
    \caption{Scenario~3: Comparison of the trajectories of the position of the robot using \dtmpc, 
    when no external disturbances affect the robot (green curve) and when external disturbances push the robot upwards 
    relative to its speed {(the blue curve for the nominal trajectory and the orange curve for the realized trajectory)}. Note that the black lines represent the boundaries of the corridor.}
    \label{fig:3_trajectory}
\end{figure}

{\paragraph{Scenario 3: Approaching an unreachable target}}

In SaR missions, there are situations when the robot should move in very narrow 
corridors without crashing into the walls. 
In such cases, TMPC may become infeasible, due to its conservativeness. 
Thus scenario~3 is designed such that the problem is infeasible for TMPC, and 
we assess whether \dtmpc\ is able to return a solution for the problem. 
In scenario~3, the robot starts at position $[5.5,0]^\top $ with a zero initial speed, and has to reach 
position $[11,0]^\top$. The following hard constraints on the position of the robot 
hold for all time steps $k$ during the simulation, where these constraints imply 
avoiding crashes into the walls in a corridor that is narrowing down:
\begin{equation}
\label{eq:s3c}
    p_{x,k}+8p_{y,k}<10;\quad p_{x,k}-8p_{y,k}<10
\end{equation}
Note that all coordinates are given in m.  
TMPC is already infeasible at the initial position of the robot, 
since the farthest position for which TMPC can still generate a feasible tube 
corresponds to $p_x= 4.59$~m, where the width of the corridor is about $0.68$~m (see Figure~\ref{fig:3_Sets}, 
which shows the corridor and the tube of TMPC). 
The aim is to find out how far can the robot move forward inside this corridor until the \dtmpc\ problem becomes infeasible. 

Scenario~3 was simulated twice, 
considering the following situations: 
\begin{enumerate}
    \item The system was not affected by external disturbances.
    \item The system was pushed upwards (relative to the speed value) via external disturbances. 
\end{enumerate}
Note that in both cases, the upper bounds of potential external disturbances were given to TMPC, 
whereas \dtmpc\ used its FIS to estimate the upcoming disturbances. 
The prediction horizon used for this scenario was tuned to $6$ (i.e., larger than the previous scenarios), 
which was because in this scenario there is no terminal constraint as the target of the robot is outside of its feasible set. 
Thus, the alternative approach is to increase the size of the prediction horizon in order to reduce the risks of collision. 
With a prediction horizon smaller than $6$,  due to being short-sighted, the simulations showed that 
the robot accelerated rapidly and did not have time 
to slow down in time in order to avoid a crash into the walls when the corridor narrowed down.

In Figure~\ref{fig:3_evolution}, the evolution of the position and the velocity of the robot for the first situation 
(i.e., when no external disturbances exist) are shown, where \dtmpc\ is used to steer the robot. 
From this figure, since there is no wall in a close neighborhood of the robot, it first moves more freely (faster), 
and then slows down in time (thanks to the larger prediction horizon) as the corridor becomes narrower. 
In fact, with \dtmpc\ the robot can move further through the corridor, compared to when TMPC is used. 
In case external disturbances exist and push the robot upwards relative to its speed value, 
a similar pattern of behavior for the robot is observed, although the increase in the speed will be less significant 
than the case without external disturbances. 
Figure~\ref{fig:3_trajectory} shows the trajectories for the position of the robot for both cases, i.e., with and without 
external disturbances. 
By comparing the trajectories, we observe how \dtmpc\ changes the nominal vertical position (in presence of external disturbances),  
which, in a vacuum, raises the cost (because of deviating from the reference trajectory). 
However, \dtmpc\ makes this decision for the nominal trajectory of the robot, because it foresees that 
its ancillary control input will cancel out the impact of the external disturbances as much as possible 
for the realized trajectory $p_{y,k}$ of the robot. Such behavior cannot be obtained using lar MPC or TMPC.

{\paragraph{Scenario 4: Ability to make high-level optimal decisions}}
\label{sec:scenario4}

Next to making robust decisions, \dtmpc\ is expected to improve the performance by making high-level optimal decisions. 
In order to assess this, in scenario~$4$ we consider a case where the robot has to move from 
position $[2.3,3]^\top$ 
to position $[3.25,3.05]^\top$, with an obstacle positioned on the way of the robot (see Figure~\ref{fig:scenario_4}). 
We suppose that the robot has the initial horizontal speed of $\nu_x=1~\frac{\textrm{m}}{\textrm{s}}$, 
and should select an initial vertical speed of either $\nu_y=1.2~\frac{\textrm{m}}{\textrm{s}}$ 
(i.e., moving upwards) or $\nu_y=-1.2~\frac{\textrm{m}}{\textrm{s}}$ 
(i.e., moving downwards). 
When the robot moves upwards, the slipperiness coefficient is larger 
and thus, the robot is prone to larger disturbances, compared to when it moves downwards. 
Therefore, when the robot moves upwards, the larger external disturbances will result in a 
longer realized path (due to an increased distance from the obstacle) for the robot compared to when the robot moves downwards. 
It is trivial that since MPC and TMPC do not take into consideration the dynamics of the external disturbances, 
they estimate the less cost when $\nu_y$ is positive, whereas \dtmpc\ returns a lower cost ($772$) for the 
trajectory that corresponds to the initial vertical speed of $\nu_y=-1~\frac{\textrm{m}}{\textrm{s}}$ 
(since this trajectory is exposed to {fewer} external disturbances),  
compared to the other trajectory that is exposed to larger disturbances (estimated cost of \dtmpc\ is $917$).  In this scenario, we did not stop the optimization early.
Therefore, \dtmpc\ potentially performs better than MPC and TMPC (in terms of the realized cost) in complex environments that are 
prone to state-dependent external disturbances. 

\begin{figure}
    \centering
    \includegraphics[width=0.95\textwidth]{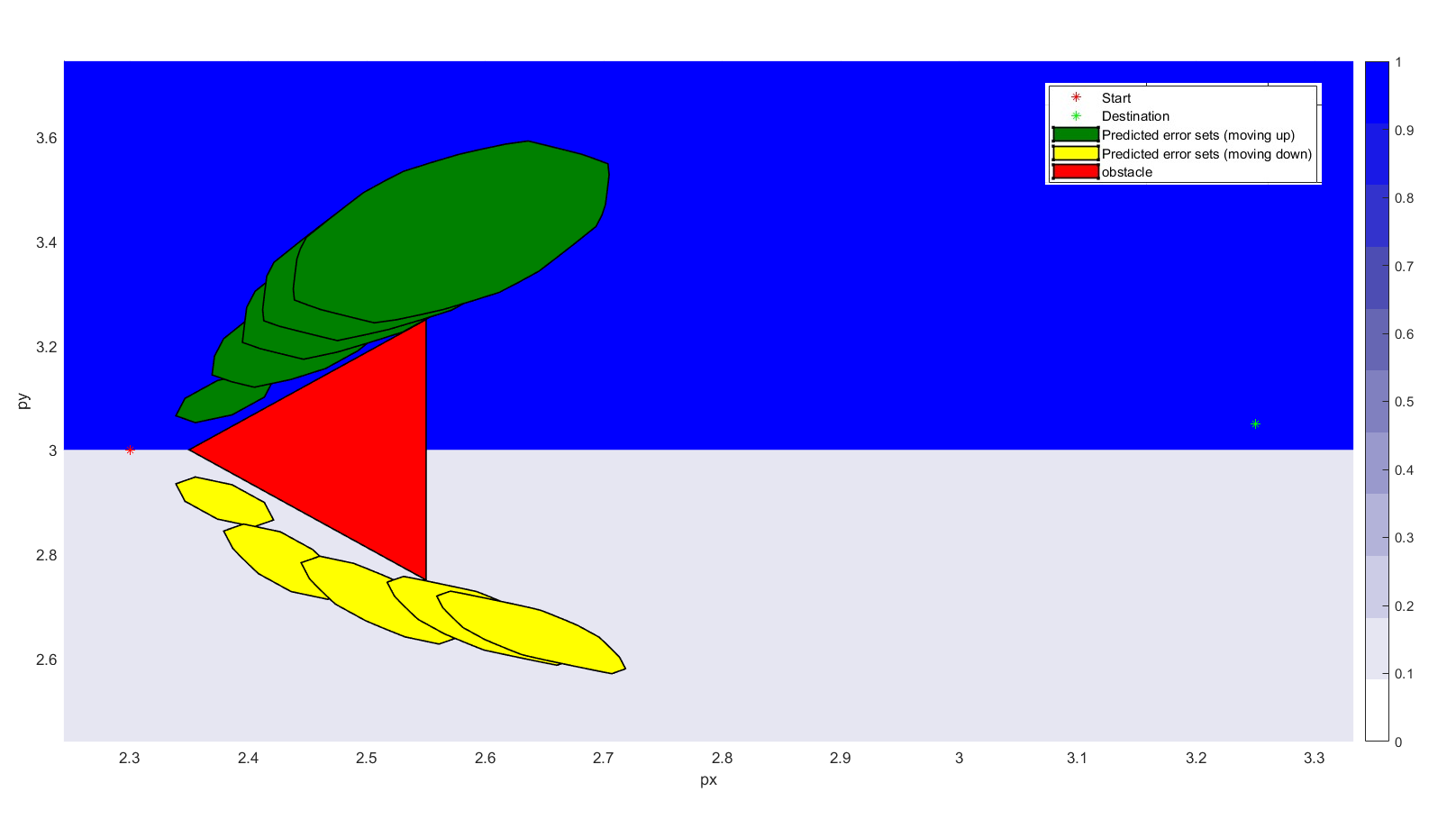}
    \caption{Scenario~4: The robot has to move from position $[2.3,3]^\top$ to position $[3.25,3.05]^\top$, 
    while avoiding an obstacle (illustrated by the triangular red shape). The trajectories of the position projections of the tubes for two cases were shown in green (above the obstacle) and yellow (below the obstacle) colors.
}
    \label{fig:scenario_4}
\end{figure}

{\subsection{Case study 2}
\subsubsection{Simulation setup}

The problem presented in \cite{UnicycleNTMPC} compared two controllers: nonlinear TMPC and nonlinear robust MPC. We replicated the experiment setup and the implementation of nonlinear TMPC, utilizing the same numerical values. The results will show that while using \dtmpc, the robot was able to achieve better performance by using less conservative control input (i.e., the robot moved faster) and still achieve a robust solution. The only detected differences to our knowledge are: 
\begin{itemize}
    \item Disturbance generator:  The authors presented solely the boundaries without elaboration on the random generation or publication of code.
    \item Solver: We used the same algorithm (interior point), but from Matlab's optimization toolbox. However, due to the nonconvex nature of the optimization problem, we cannot ensure that we will identify the exact same solutions.
    \item Tube initialization: In \cite{UnicycleNTMPC}, it was noted that the solver had the freedom to choose the initial nominal position at each time step, provided that the measured actual position remained within the tube. However, there is no mention of the starting nominal direction. We permitted the solver to freely choose the direction, but it remains uncertain if this was the intended design.
    \item Discretization method: In \cite{UnicycleNTMPC}, the authors studied a continuous system that was discretized using the ICLOCS toolbox \cite{ICLOCS2}. We employ Matlab's c2d function, using Zero-Order Hold (ZOH) \cite{ZOH} and Runge-Kutta 4 algorithm to discretize linear and nonlinear systems, respectively. 
    \item Hardware: We utilized the same computer that was employed in case study~1.
\end{itemize}

In the simulation, two unicycle robots are present: a virtual leader robot, which is assumed to remain unaffected by 
the external disturbances, and a follower robot, which should be controlled and is affected by the disturbances.  
A unicycle robot is a circular robot that is steered at time instant $t$ by the 
velocities $\nu_t^\text{l}$ and $\nu_t^\text{r}$ of its, respectively, left wheel and right wheel. 
The inputs of the system at time instant $t$ are the linear velocity $\nu_t$ and the angular velocity $\omega_t$, given by: 
\begin{equation}\label{eq:lin_vel}
    \nu_t=(\nu_t^\text{l}+\nu_t^\text{r})/2
\end{equation}
\begin{equation}\label{eq:rot_vel}
    \omega_t=(\nu_t^\text{r}-\nu_t^\text{l})/(2\rho)
\end{equation}
where $\rho$ represents the radius of the robot. Moreover, the admissible set of the inputs is  defined by: 

\begin{equation}\label{eq:input_cons}
    \mathbb{U}=:\Big\{[\nu,\omega]^T\in\mathbb{R}^2\Big|\frac{|\nu|}{\nu\maxu}+\frac{\rho|\omega|}{\nu\maxu} \leq 1 \Big\}
\end{equation}
where $\nu\maxu$ is the maximum absolute velocity of the wheels.%

The leader robot utilizes constant reference inputs $\nu^\text{R}$ and $\omega^\text{R}$.  
The state $\bx_t^\text{R}$ of the leader robot per time instant $t$ is characterized by the position ($p^{\text{R}}_{x,t},p^{\text{R}}_{y,t}$) of its center and by the orientation $\psi_t^\text{R}$ of the robot.%

In the continuous time domain, the state evolution for the leader robot for time instant $t$ is determined by the following non-linear system equations \eqref{eq:leader}: 
\begin{equation}
\label{eq:leader}
    \begin{bmatrix}
        \Dot{p}^{\text{R}}_{x,t}\\\Dot{p}^{\text{R}}_{y,t}\\\Dot{\psi}_t^{\text{R}}
    \end{bmatrix}=\begin{bmatrix}
        \nu^\text{R}\cos(\psi_t^\text{R})\\\nu^\text{R}\sin(\psi_t^\text{R})\\\omega^\text{R}
    \end{bmatrix}
\end{equation}
The input $\bu_t$ of the follower robot at time instant $t$ consists of the linear velocity $\nu_t$ and 
the angular velocity $\omega_t$ of this robot, while its state $\bx_t$ is determined by the position 
$(p_{x,t},p_{y,t})$ of the head\footnote{The head of the robot is referred to the front point 
of the  robot, where this point is located on the main axis of the robot that is perpendicular to the 
axis of the wheels of the robot.} of the robot 
and by its orientation $\psi_t$.  
The position is impacted by the unknown disturbance vector $\bw_t = [w_{x,t},w_{y,t}]^T$, 
such that $\sqrt{w_{x,t}^2+w_{y,t}^2}<\eta$. 
The state evolution for the follower robot is thus determined by the nonlinear system equations below:
\begin{equation}\label{eq:system2}
    \dot{\bx}_t = 
    \begin{bmatrix}
        \Dot{p}_{x,t}\\\Dot{p}_{y,t}\\\Dot{\psi_t}
    \end{bmatrix}=\begin{bmatrix}
        \nu_t\cos(\psi_t)+\rho\omega_t\sin(\psi_t)\\\nu_t\sin(\psi_t)+\rho\omega_t\cos(\psi_t)\\\omega_t
    \end{bmatrix}+\begin{bmatrix}
        w_{x,t}\\w_{y,t}\\0
    \end{bmatrix}
    =f(\bx_t,\bu_t, \bw_t)
\end{equation}

The robots were simulated for $100$ time steps and based on the e-pucks \cite{epuck}, 
with $\nu\maxu=0.13\frac{\text{m}}{\text{s}}$ and $\rho=0.0267$m. 
The aim is to maintain a constant distance $\bp^{\textrm d}=[p^{{\textrm d}}_x,p^{{\textrm d}}_y]^T$ between the follower and the leader robots, in the local coordinate frame of the leader robot (the origin of this local frame coincides with the position of the center of the leader robot and the $x$-axis is parallel to the direction of the movement of the robot). 
The leader robot moves with $\nu^{\textrm{R}}=0.015\frac{\text{m}}{\textrm{s}}$  and $\omega^\textrm{R}=0.04 \frac{\text{rad}}{\text{s}}$. 
We considered the initial state $[0,0,\frac{\pi}{3}]$ for the leader robot, the initial state $[0.4,-0.2,-\frac{\pi}{2}]^T$ 
for the follower robot, and $p^{\textrm{d}}=[-0.1,-0.1]^T$ as the desired distance between the leader and follower robots (measured in the local coordinates of the leader robot). 
The bound of the disturbances on the $x$ and $y$ positions of the follower robot is $\eta=0.004$~m. 
Note that the control framework in this problem is discrete time. 
Therefore, we used Matlab Ode45 \cite{ode45} 
in order to determine the evolved states per discrete time step, using the system of differential equations given by \eqref{eq:leader} and \eqref{eq:system2}. 

\subsubsection{Nonlinear TMPC design}

In this section, we describe the design of the nonlinear TMPC, based on the approach given in \cite{UnicycleNTMPC}, 
for controlling the motion of the follower robot.  
At every iteration, TMPC solves the following problem: 
\begin{subequations}\label{eq:NTMPC}
    \begin{equation}
     V^*(\bx_k) = \min_{\Tilde{\bz}_k,\Tilde{\bv}_k}\sum_{i=k}^{k+N-1}\left(\norm{\bz_{i|k}^\text{r}}_Q^2+\norm{\bm{v}_{i|k}^\text{r}}_R^2\right)+\norm{\bz^\text{r}_{k+N|k}}_F^2 
\end{equation}
\begin{equation*}
     \text{ s.t. for $i = k+1, \ldots, k+N$: }
\end{equation*}
\begin{equation}\label{eq:NTMPC_start}
    \bx_k\in \{\bz_{k}\}\oplus\mathbb{E}\maxu
\end{equation}
\begin{equation}\label{eq:NTMPC_evolution}
    \bz_{i+1|k}=f^\text{d}(\bz_{i|k},\bv_{i|k})
\end{equation}
\begin{equation}\label{eq:sets}
     \bv_{i|k}\in\mathbb{V},\quad \bz_{i+N|k}^\text{r}\in\mathbb{Z}_\text{f}
\end{equation}
 \begin{equation}
 \label{eq:transition_1}
        \bz^r_{i|k}[1,2]=\text{Rot}(-\bz_{i|k}[3])(\bx_i^\text{R}[1,2]-\bz_{i|k}[1:2])+\text{Rot}(\bz^\text{r}_{i|k}[3])\bp_{\textrm{d}}
    \end{equation}
    \begin{equation}
 \label{eq:transition_2}
        \bz^\text{r}_{i|k}[3]=\bx_i^\text{R}[3]-\bz_{i|k}[3]
    \end{equation}
    \begin{equation}
 \label{eq:transition_3}
        \bv^\text{r}_{i|k}=\begin{bmatrix}
            -\bv_{i|k}[1]+(\nu_\text{R}-p^{\textrm{d}}_x\omega^\text{R})\cos(\bz^\text{r}_{i|k}[3])-p^{\text{d}}_x\omega^\text{R}\sin(\bz^\text{r}_{i|k}[3])\\
            -\rho \bv_{i|k}[2]+(\nu_\text{R}-p^{\textrm{d}}_x\omega^\text{R})\sin(\bz^\text{r}_{i|k}[3])-p^{\text{d}}_y\omega^\text{R}\cos(\bz^\text{r}_{i|k}[3])
        \end{bmatrix}
    \end{equation}
\end{subequations}
where \eqref{eq:NTMPC_start} states that the initial nominal state $\bz_k$ of the follower robot, which is a decision variable, 
should be determined such that the error between the measured state $\bx_k$ and the nominal state $\bz_k$ of the follower robot belongs to set $\mathbb{E}\maxu$.  
Moreover, according to \eqref{eq:NTMPC_evolution}, the nominal states of the follower robot evolve 
according to the discretized function $f^\text{d}(\cdot)$, which is determined after time-discretization of 
\eqref{eq:system2} and by putting $\bw_k = 0$ for all time steps. 
Constraint  \eqref{eq:sets}  enforces all the calculated nominal inputs to fall within set $\mathbb{V}$, which should be constructed, such that if $\bv_{i|k} \in \mathbb{V}$, then for the actual input we have  $\bu_{i|k}\in\mathbb{U}$ for a given ancillary control law. Finally,  $\mathbb{Z}_\text{f}$ is the terminal set of the states for the follower robot.
The cost function $V(\cdot)$ is computed using $\bz^\text{r}_{i|k}$, which is the nominal state of the follower robot 
within the coordinate system of the leader robot, while the original nominal state $\bz_{i|k}$ is 
given in the global coordinates. The transition from the global coordinate system to the 
local coordinate system of the leader robot is done via \eqref{eq:transition_1}-\eqref{eq:transition_3}, where Rot$(\cdot)$ is a function that returns a $2$-dimensional rotation matrix with respect to the global coordinates, given an input angle. 
Whenever square brackets are used after a vector, if the bracket contains one scalar, the notation refers to the element 
corresponding to that scalar of the vector. In case the square brackets include `scalar~1:scalar~2', the notation refers to 
elements `scalar~1' to `scalar~2' (including these two elements) of the vector.%

The nonlinear ancillary control law in \cite{UnicycleNTMPC} is first given in the continuous time domain for a given time instant $t$, via:
\begin{equation}\label{eq:ancilary}
    \bu_t=\begin{bmatrix}
        \cos(\bx_t[3])&-\rho\sin(\bx_t[3])\\\sin(\bx_t[3])&\rho\cos(\bx_t[3])
    \end{bmatrix}^{-1}    
    \left(
    \begin{bmatrix}
        \cos(\bz_t[3])&-\rho\sin(\bz_t[3])\\\sin(\bz_t[3])&\rho\cos(\bz_t[3])
    \end{bmatrix}\bv_t-K^\text{e}\be_t[1:2]
    \right)
\end{equation}
where $K^\text{e}$ is a design variable that remains constant during the implementation of the nonlinear TMPC. 
This ancillary control law linearizes the dynamics of the position error of the controlled system. This dynamics 
is described by: 
\begin{equation}
\label{eq:pos_error_dynamics}
    \Dot{\bm{e}}_t[1:2]=K^\text{e}\bm{e}_t[1:2]+[w_x,w_y]^T
\end{equation}
After discretization, the dynamics of the position error for time step  $i\in\{k, \ldots, k+N-1\}$ is given by:
\begin{equation}\label{eq:pos_error_dynamics_disc}
    \bm{e}_{i+1}[1:2]=K^\text{e,d}\bm{e}_{i}[1:2]+\Ts[w_x,w_y]^T
\end{equation}
where $\Ts$ represents the sampling time and $K^\text{e,d}$ is calculated based on the zero-order 
hold approach (ZOH) \cite{ZOH}, in order to ensure equivalence between the continuous-time and discrete-time formulations. 
The cross section $\mathbb{E}\maxu$ of the tube for nonlinear TMPC, the set  $\mathbb{V}$ 
of admissible values for the nominal control inputs, and the terminal control law (i.e., the control law that is implemented beyond the prediction horizon) are defined by:  
\begin{equation}\label{eq:nonlinear_tube}
    \mathbb{E}\maxu=\Big\{\bm{e}\in\mathbb{R}^3\big| \max(|\bm{e}[1:2]|)<\frac{\Ts\eta}{1-K^\text{e,d}}\Big\}
\end{equation}
\begin{equation}\label{eq:nonlinear_tube_input}
    \mathbb{V}=\Big\{\bv\in\mathbb{R}^2\big|[\frac{1}{\nu\maxu},\frac{\rho}{\nu\maxu}]|\bv|<\frac{\sqrt{2}}{2}-\frac{\eta\sqrt{2}}{\nu\maxu}\Big\}
\end{equation}
\begin{equation}\label{eq:terminal_law_nonlinear}
    \bu_k^{\textrm T}=\begin{bmatrix}
        \Bar{K}\bz_{k+N}^\text{r}[1]+(\nu^\text{R}-p^{\text{d}}\omega^\text{R})\cos(\bz_{k+N}^\text{r}[3])-p^{\text{d}}_x\omega^\text{R}\sin(\bz_{k+N}^\text{r}[3])\\
        \frac{1}{\rho}(\Bar{k}\bz_{k+N}^\text{r}[2]+(\nu^\text{R}-p^{\text{d}}_x \omega^R)\sin(\bz_{k+N}^\text{r}[3])-p^{\text{d}}_y\omega^R\cos(\bz_{k+N}^\text{r}[3]))
    \end{bmatrix}
\end{equation}
where $\Bar{K}$ is a constant. 
Finally, the terminal admissible set for the position of the follower robots is given by:
\begin{equation}\label{eq:terminal_nonlinear_constriant}
    \mathbb{Z}_\text{f}=:\left\{\bz^\text{r}\in\mathbb{R}^3\big|\Bar{K}(|\bz^\text{r}[1]|+|\bz^\text{r}[2]|)
    <\frac{\nu\maxu\sqrt{2}}{2}-\eta\sqrt{2}-\sqrt{2}
    \norm{\begin{bmatrix}
        1&-p^{\text{d}}_x\\0&p^{\text{d}}_y
    \end{bmatrix}}\right\}
\end{equation}

In the numerical experiment, the following constants  were chosen: $N=10$, $\Ts=0.2$s, $Q$=diag(0.2,0.2,0), $R$=diag(0.4,0.4),  $F$=diag(0.5,0.5), $\Bar{K}=1.2$, $K^\text{e}$=2.3, $K^\text{e,d}$=0.63, which  lead to the following sets: $\mathbb{E}\maxu=\{\bm{e}\in\mathbb{R}^3|  \max(|\bm{e}[1:2]|)\leq0.0022\}$ $\mathbb{V}=0.6636\mathbb{U}$, and $\mathbb{Z}_\text{f}=\{\bz^\text{r}\in\mathbb{R}^3||z^\text{r}(1)|+|z^\text{r}(2)|\leq0.0542\}$. 

\begin{table}[]\caption{The following mathematical notation is used in section 4.2:}\resizebox{\textwidth}{!}{
\label{tab:math42}
\begin{tabular}{llll}
\hline
\multicolumn{1}{|l|}{The notation} & \multicolumn{1}{l|}{The explenation}                              & \multicolumn{1}{l|}{The notation} & \multicolumn{1}{l|}{The explenation}                                \\ \hline
diag$(\cdot)$                      & This function returns diagonal matrix with chosen elements        & set{[}i{]}                        & Projection of a set over $i^\text{th}$ dimension of the state space \\
$F$                                & Terminal cost matrix of states of the original cost function      & $T_s$                             & Sampling time                                                       \\
$f^d(\cdot)$                       & Discrtzied system function                                        & vector{[}i{]}                     & The $i^\text{th}$ element of the vector.                            \\
$\Bar{k}$                          & A constant used in the terminal control law                       & $\mathbb{V}$                      & A polytope set of feasible inputs used by controller                  \\
$k_e$                              & State-feedback constant                                           & $\mathbb{V}_\text{NL}$            & A nonlinear set which could be used to tighten input constraints    \\
$k_{ed}$                           & Discretized state-feedback constant                               & $v^r$                             & Nominal input in transformed to leader's coordinate frame           \\
m                                  & meters                                                            & $z^r$                             & Nominal state in transformed to leader's coordinate frame           \\
$\frac{\text{m}}{\text{s}}$        & meters per second                                                 & $\eta$                            & Boundary of disturbance                                             \\
$\bm{p}_d$                         & A constant distance follower should keep to the leader.           & $\lambda(\cdot)$                  & A state-dependent parameter used to tighten input constraints.      \\
${p}_{d,x}$                        & A constant distance follower should keep to the leader in x-axis. & $\nu$                             & Linear velocity of the follower                                     \\
${p}_{d,y}$                        & A constant distance follower should keep to the leader in y-axis. & $\nu_{\text{max}}$                & Maximum velocity of the robot's wheels                              \\
$p_{\text{R},x}$                   & The current leader's position component parallel to x-axis        & $\nu_{\text{l}}$                  & Velocity of the follower's left wheel                               \\
$p_{\text{R},y}$                   & The current leader's position component parallel to y-axis        & $\nu_{\text{r}}$                  & Velocity of the follower's right wheel                              \\
$p_{x}$                            & The current follower's position component parallel to x-axis      & $\nu_\text{R}$                    & Linear velocity of the leader                                       \\
$p_{y}$                            & The current follower's position component parallel to y-axis      & $\rho$                            & Radius of the robot                                                 \\
$Q$                                & Stage cost matrix of states       & $\psi$                            & The current follower's direction                                    \\
$R$                                & Stage cost matrix of inputs         & $\psi_\text{R}$                   & The current leader's direction                                      \\
ROT$(\cdot)$                       & A function that returns a rotation matrix in 2 dimensions         & $\omega$                          & Angular velocity of the follower                                    \\
s                                  & seconds                                                           & $\omega_\text{R}$                 & Angular velocity of the leader                                     
\end{tabular}}
\end{table}

\subsubsection{\dtmpc\ design}

 In this section, we present the design of \dtmpc\ for controlling the follower robot. 
 The same ancillary control law that is given by \eqref{eq:ancilary} is used for \dtmpc. 
 In case study~1, the main effort was to make the error sets $\mathbb{E}_i$ for all $i\in\{k+1,\ldots, k+N\}$ as small as possible. 
 In case study~2, however, in order to reduce the computation time, which is affected by the larger prediction horizon (which is $10$), we approximate the error sets via box sets. Although this may slightly increase the conservatism, 
 it significantly reduces the computation time.   
 Let $\mathbb{E}_i[\ell]$ denote the projection of the error set $\mathbb{E}_i$ onto the $\ell^{\text{th}}$ 
 dimension of the state space (thus for this case study $\ell=1,2,3$).  Since we are working with box sets, $\mathbb{E}_i$ is the 
 cross product over the projections of this set on all the dimensions of the state space. 
 Based on \eqref{eq:pos_error_dynamics_disc}, for $\ell = 1,2$ for the projection of the error set we can write: 
\begin{equation}\label{eq:ex}
    \mathbb{E}_{i+1}[\ell]=K^\text{e,d}\mathbb{E}_i[\ell] \oplus\mathbb{W} 
\end{equation}
where $\mathbb{W} = \{ w\in\mathbb{R}| w\leq \Ts \eta \}$.%

    The resulting position error dynamics are linear, but deriving the rotation error dynamics leads to nonlinear dynamics.
Note that computing all possible errors per time instant 
via nonlinear dynamics is generally computationally expensive. 
Moreover, we have chosen to work with polytopic error sets, while propagating polytopes through nonlinear equations does not 
necessarily result in a polytopic error set per time instant. 
Therefore, as suggested in \cite{NTMPClipshitz}, we replace the nonlinear dynamic equation 
by a simplified evolution equation for $\bm{e}_t[3]$, where the remaining terms are treated as state-dependent external disturbances \eqref{eq:error_dynamics_direction}.
    \begin{equation}\label{eq:error_dynamics_direction}
        \Dot{\bm{e}}_t[3]=-\frac{1}{\rho}\be_t[3]\bv_t[1]+\we_t \text{ where }\we_t \in \mathbb{W}_t[3](\be,\bz,\bv)
    \end{equation}
This approach has proven to be very accurate for small deviations in the third dimension of the state space (in this case 
the robot's heading). 
From now on, we will assume that the absolute values of $\bm{e}_t[3]$ remain relatively small (i.e., $|\bm{e}_t[3]|<\pi/4$). 
The exact procedure for deriving \eqref{eq:error_dynamics_direction} has been shown in Appendix \ref{app1}.

\begin{subequations} 
The final phase of the \dtmpc\ design for case study 2 is to find a method to tighten the nominal input constraints online so that the real control input satisfies the original input constraints.
To ensure this, the following input constraint \eqref{eq:v_U_sec} must be satisfied, where $\mathbb{V}^\text{NL}(\cdot)$ is a state-dependent set. Although the resulting set is a polytope if the error in direction is a known value (see a red polytope in Figure~\ref{fig:Uset}), this is not true if there is a set of possible errors in direction. In such a case, to ensure the satisfaction of the constraint, $\mathbb{V}^\text{NL}(\cdot)$ is the intersection of the output sets (see a black polytope in Figure~\ref{fig:Uset}), which generally does not lead to a polytope if the set of possible errors in direction is continuous and the exact computation of 
\eqref{eq:v_U_sec} per time instant can become computationally intractable. 

\begin{equation}\label{eq:v_U_sec}
    \{\bv_t\}\oplus\begin{bmatrix}
        -\cos(\bz_t[3])& -\sin(\bz_t[3])\\ \frac{1}{\rho}\sin(\bz_t[3])& -\frac{1}{\rho}\cos(\bz_t[3])
    \end{bmatrix} K^\text{e} (\mathbb{E}_t[1]\times\mathbb{E}_t[2])\subseteq
    \bigcap_{\be\in\mathbb{E}_t}\left(\begin{bmatrix}
        \cos (\be_t[3])&-\rho\sin(\be_t[3])\\ \frac{1}{\rho}\sin (\be_t[3])&\cos(\be_t[3])
    \end{bmatrix}\mathbb{U} \right)=\mathbb{V}^\text{NL}(\mathbb{E}_t[3])
\end{equation}

To reduce the computational complexity associated with \eqref{eq:v_U_sec} and to ensure that the resulting $\mathbb{V}^\text{NL}(\mathbb{E}_t[3])$ is always represented by a polytope, we replace the quadrangles corresponding to $\mathbb{V}^\text{NL}(\be_t[3])$ by 
(i.e., the red background quadrangle in the Figure~\ref{fig:Uset}) with the largest 
subset of $\mathbb{V}^\text{NL}(\be_t[3])$ for which the diameters are parallel to the $x$ and $y$ axes (see the blue foreground quadrangle in Figure~\ref{fig:Uset}).  
In other words, we replace $\mathbb{V}^\text{NL}(\cdot)$ with the multiplication of the original set $\mathbb{U}$  and a positive, state-dependent scalar function $\lambda(\cdot)$, i.e:
\begin{equation}
    \label{eq:new_def_V2}
     \frac{1}{\cos(\cdot)+|\sin(\cdot)|}\mathbb{U}=\lambda(\cdot)\mathbb{U}\subseteq\mathbb{V}^\text{NL}(\cdot)
\end{equation}

\begin{figure}
    \centering
    \includegraphics[width=0.95\textwidth]{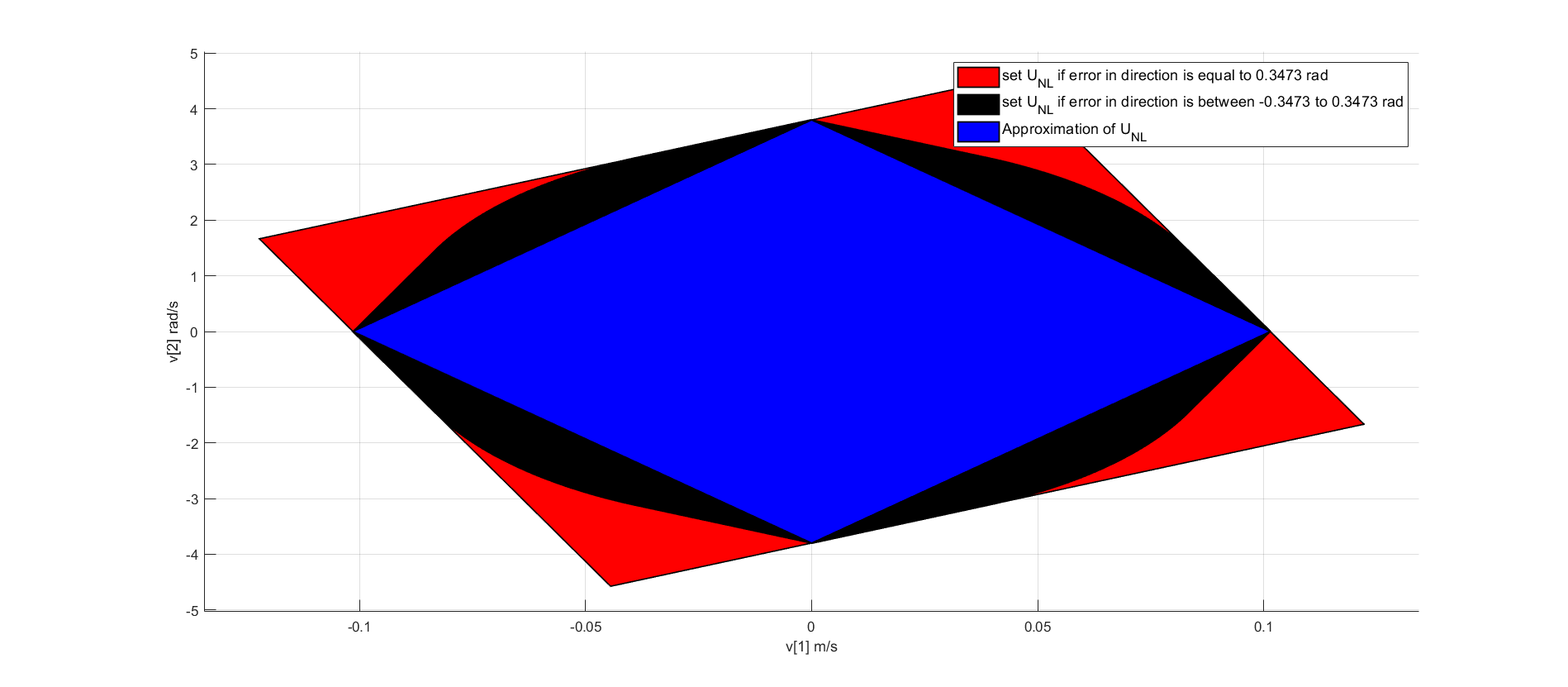}
    \caption{Illustration of an example quadrangle (the red background quadrangle) that represents $\mathbb{V}^\text{NL}(\be_t[3])$ when 
$\be_t[3] = 0.3473$. The intersection of all such quadrangles for the entire range of $\be_t[3]$, 
i.e., for $ -0.3473\leq \be_t[3] \leq 0.3473$, generates $\mathbb{V}^\text{NL}(\mathbb{E}_t[3])$ (the black middle-ground polygon).    
The simplified version of $\mathbb{V}^\text{NL}(\be_t[3])$, i.e., the largest 
subset of $\mathbb{V}^\text{NL}(\be_t[3])$ with its diameters parallel to the $x$ and $y$ axes, is shown via the blue foreground quadrangle.}
    \label{fig:Uset}
\end{figure}

Figure~\ref{fig:lambda} shows the curve corresponding to function $\lambda(\cdot)$. 
We define the admissible set $\mathbb{V}\left( \mathbb{E}_t \right)$ for the nominal control inputs as a function of the error set via:    \begin{equation}\label{eq:lambda_U2}
        \mathbb{V}\left( \mathbb{E}_t\right):=\min_{\be_t[3]\in\mathbb{E}_t[3]} \left(\lambda(\be_t[3]\right)\mathbb{U}\ominus
        \begin{bmatrix}
            -\cos(\bz_t[3])& -\sin(\bz_t[3])\\ \frac{1}{\rho}\sin(\bz_t[3])& -\frac{1}{\rho}\cos(\bz_t[3])
        \end{bmatrix} K^\text{e} (\mathbb{E}_t[1]\times\mathbb{E}_t[2])
    \end{equation}
\end{subequations}
For the design of TMPC in \cite{UnicycleNTMPC}, the admissible set of nominal control inputs was generated by scaling the set $\mathbb{U}$ by a constant value $\mathbb{U}$. with a constant value of $0.6636$. As shown in Figure~\ref{fig:lambda}, the set $\mathbb{V}\left( \mathbb{E}_t\right)$ was defined by 
over \eqref{eq:lambda_U2} is consistently larger. 
Thus, the original design is more conservative than our proposed approach. 
Note that to further reduce the conservatism, $\mathbb{V}^{\text{NL}}\left(\mathbb{E}_t\right)$ and thus $\mathbb{V}\left(\mathbb{E}_t\right)$ can be approximated by polytopes that generally have more vertices than $4$.  The exact procedure for deriving the input constraints is given in the appendix \ref{app2}.

\begin{figure}
    \centering
    \includegraphics[width=0.95\textwidth]{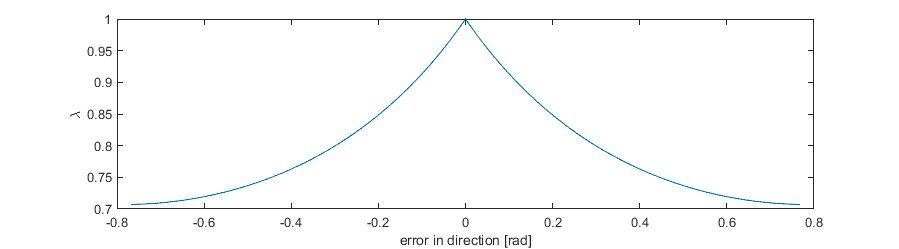}
    \caption{Illustration of the curve representing function $\lambda(\cdot)$ versus the error in the third dimension of the state space.}
    \label{fig:lambda}
\end{figure}

Finally, using the derived error dynamics given by \eqref{eq:ex} and the time-discrete version of \eqref{eq:error_dynamics_direction}, the online input constraint tightening method that is formulated by \eqref{eq:lambda_U2}, 
and using the solutions of the regular nonlinear TMPC method from \cite{UnicycleNTMPC} as a warm start, the nonlinear   
optimization problem that is solved by \dtmpc\ per time step is given by:
\begin{subequations}
\label{eq:optimization_nonlinear}
\begin{equation}
\label{eq:cost_nonlinear}
     V^*(\bx_k,\bz_k) = \min_{\Tilde{\bz}_k, \Tilde{\bv}_k , \mathbb{T}_k}\sum_{i=k}^{k+N-1}\left(\norm{\bz_{i|k}^\text{r}}_Q^2+\norm{\bm{u}_{i|k}^\text{r}}_R^2\right)+\norm{\bz^\text{r}_{k+N|k}}_F^2
\end{equation}
\begin{equation*}
     \text{ s.t. for $i = k+1, \ldots, k+N$: }
\end{equation*}
\begin{equation}
\label{eq:nominal_system_nonlinear}
    \bz_{i+1|k}=f^\text{d}(\bz_{i|k},\bv_{i|k})  
\end{equation}
 \begin{equation}
  \label{eq:tubeTk_nonlinear}
     \mathbb{T}_k = \{\{z_{k+1|k}\}\oplus\mathbb{E}_{k+1|k}, \ldots, \{z_{N|k}\}\oplus\mathbb{E}_{k+N|k} \}    
 \end{equation}
\begin{equation}
\label{eq:2nd_tube_nonlinear}
    \Bar{\mathbb{E}}_{i|k} =\text{diag} \left(K^{\text{e,d}},K^{\text{e,d}},K^{\theta,\text{d}}\right)\mathbb{E}_{i-1|k}
\end{equation}
\begin{equation}
\label{eq:final_tube_nonlinear}
    \mathbb{E}_{{i|k}} = \Bar{\mathbb{E}}_{i|k}\oplus\mathbb{W}_{i-1|k}(\mathbb{E}_{i-1|k},z_{i-1|k},v_{i-1|k})
\end{equation}
\begin{equation}
\label{eq:first_tube_nonlinear}
   \mathbb{E}_k=\{\bx_k-\bz_k\}
\end{equation}
\begin{equation}
\label{eq:policy_feasible_nonlinear}
      v_{i|k}\in\mathbb{V}(\mathbb{E}_{i|k})
\end{equation}
\begin{equation}
\label{eq:terminal_nonlinear}
  \bz^\text{r}_{N|k} \in \mathbb{Z}_\text{f}
\end{equation}
\end{subequations}
where $K^{\theta,\text{d}}$ is the dynamic coefficient that is multiplied by $\be_{\ell}[3]$, 
after \eqref{eq:error_dynamics_direction} is discretized in time, with $\ell$ the time step counter.%

For ease of comparison, we use the same horizon, sampling time, cost function, and terminal set for 
both regular nonlinear TMPC and \dtmpc. With the solution of regular nonlinear TMPC used as a warm start for \dtmpc, 
a solution is obtained for \dtmpc\ using the PS algorithm in under $30$ seconds.

\subsubsection{Numerical results}

In figure \ref{fig:path}, the path followed by both controllers is visible. It is evident that both controllers were able to reach and follow the path, despite the disturbance. Figure \ref{fig:error_theta} shows that the absolute value of error in direction is actually less than 0.6 rad (below 3.5 degrees), confirming our assumption of low angles.  

The benefit of utilizing \dtmpc\ is illustrated in figure \ref{fig:error_rise}, which depicts \dtmpc\ transitioning from a transient state to a steady state at a more rapid pace. Using \dtmpc\ robot needed over 10\% less time to decrease distance to the destination below 0.05m. In addition, once both controllers achieve the steady state, their performance is nearly equivalent, as seen in figure \ref{fig:error_steady}. This is due to the fact that \dtmpc\ is authorized to employ substantially greater nominal inputs, as shown in figure \ref{fig:input_nominal}. Even though \dtmpc\ returns significantly higher input (above 30\%), especially in the early stages, the actual input constraints are never violated (Figure \ref{fig:input_real}) and we can observe that the real input is usually 20\% higher, during the first 3 seconds of the mission, compared to the input of TMPC. This is particularly intriguing because the nominal input of \dtmpc\ violates the original constraints $\mathbb{U}$. Nonetheless, \dtmpc\ anticipates the ancillary controller's behavior and understands it can marginally violate constraints since the true input will not breach them. Those results are noteworthy as \cite{UnicycleNTMPC} compared TMPC to another robust MPC design. TMPC exhibited improved steady-state performance but required more time to achieve it. However, with \dtmpc\, we successfully eliminated the high-rise time while maintaining equivalent steady-state performance.


\begin{figure}
    \centering
    \includegraphics[width=0.95\textwidth]{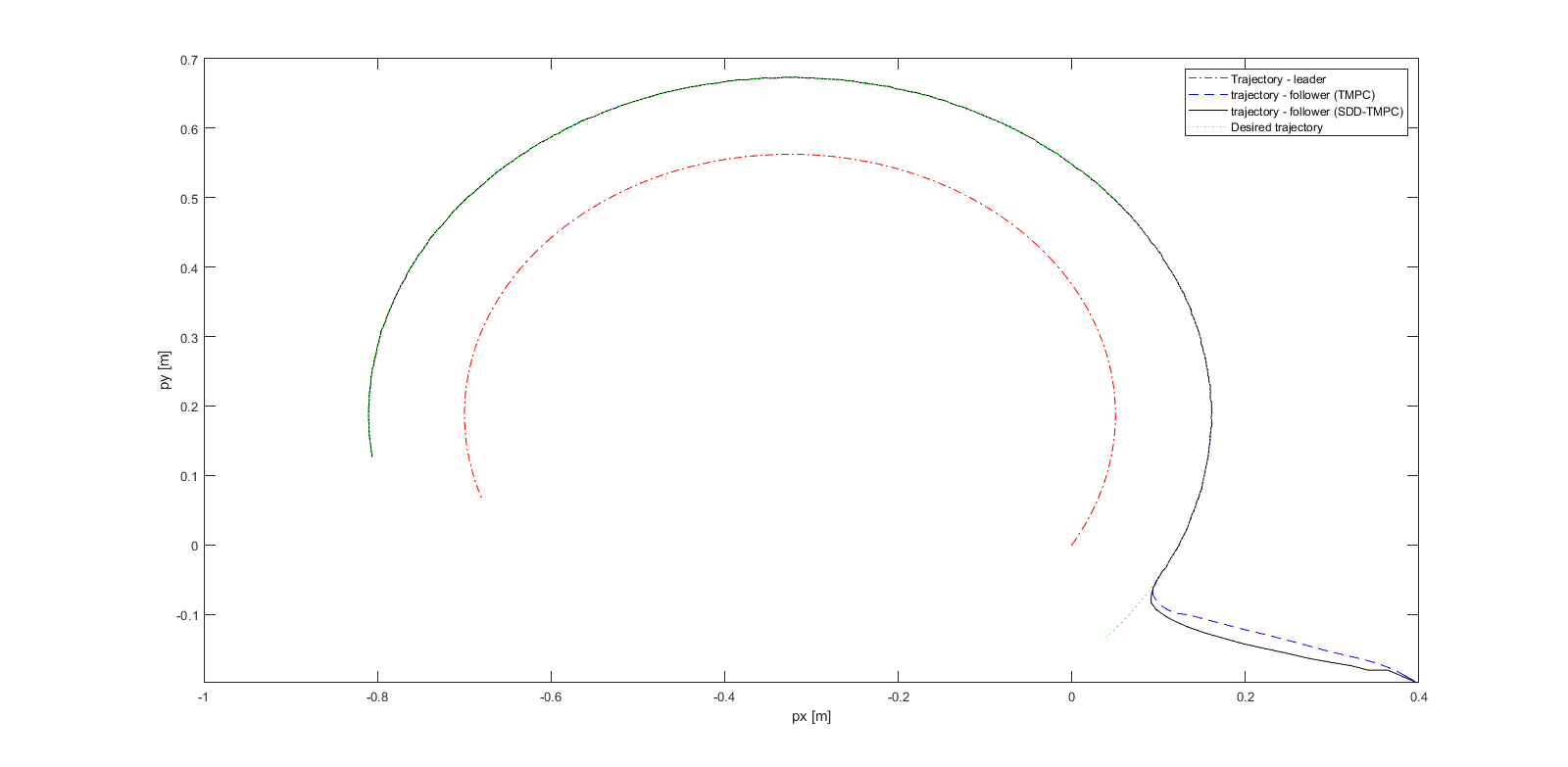}
    \caption{A desired path, paths followed by the leader, and two follower paths when the follower was using  TMPC or \dtmpc\ as controller.}
    \label{fig:path}
\end{figure}

\begin{figure}
    \centering
    \includegraphics[width=0.95\textwidth]{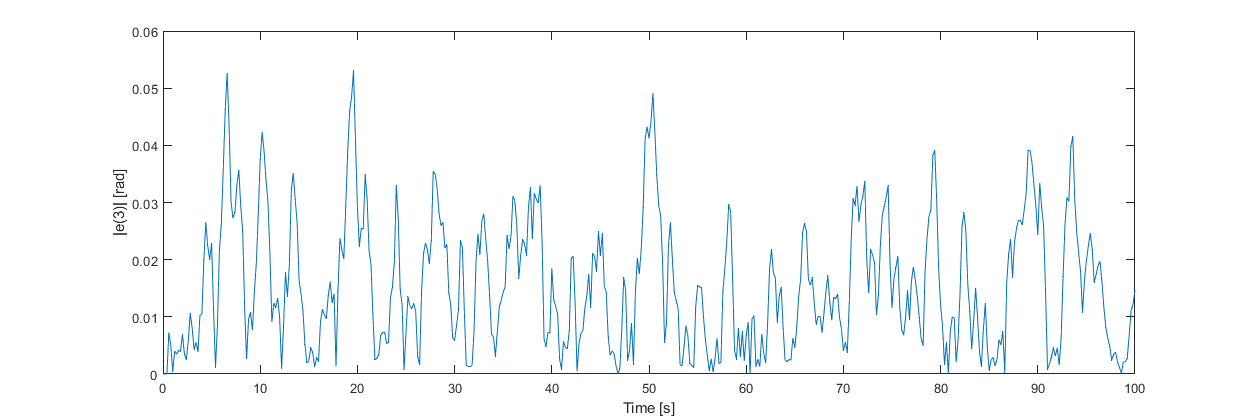}
    \caption{A graph illustrating the variance between nominal and actual direction for \dtmpc.}
    \label{fig:error_theta}
\end{figure}

\begin{figure}
    \centering
    \includegraphics[width=0.95\textwidth]{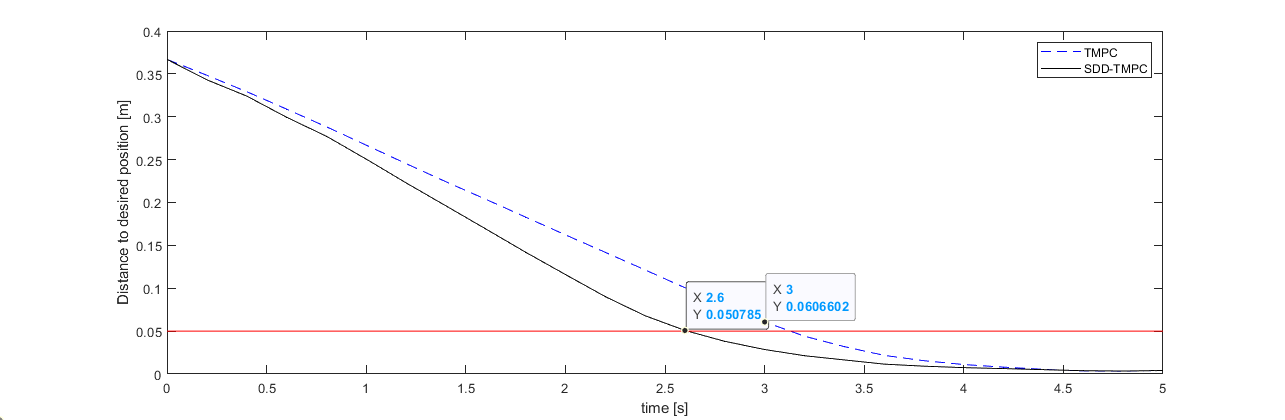}
    \caption{A distance between the desired position given time during the transient state for \dtmpc\ and TMPC.}
    \label{fig:error_rise}
\end{figure}

\begin{figure}
    \centering
    \includegraphics[width=0.95\textwidth]{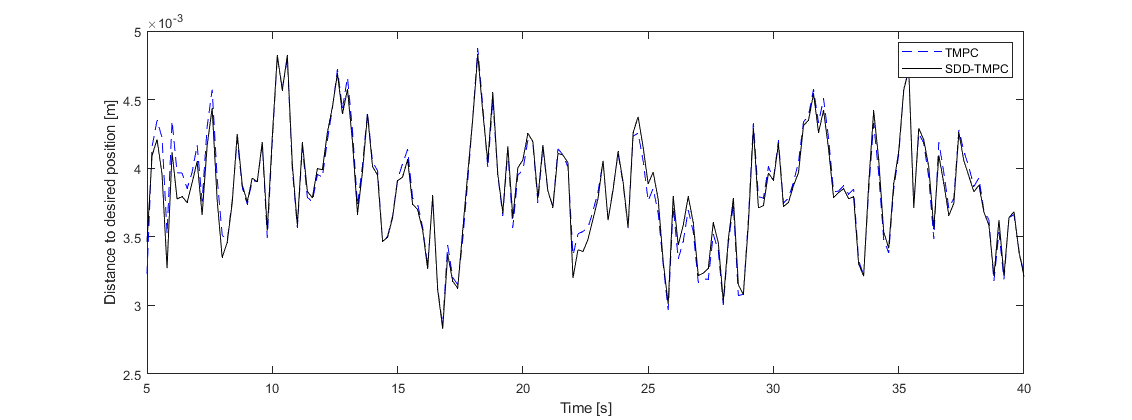}
    \caption{A distance between the desired position given time during the steady state for \dtmpc\ and TMPC.}
    \label{fig:error_steady}
\end{figure}

\begin{figure}
    \centering
    \includegraphics[width=0.95\textwidth]{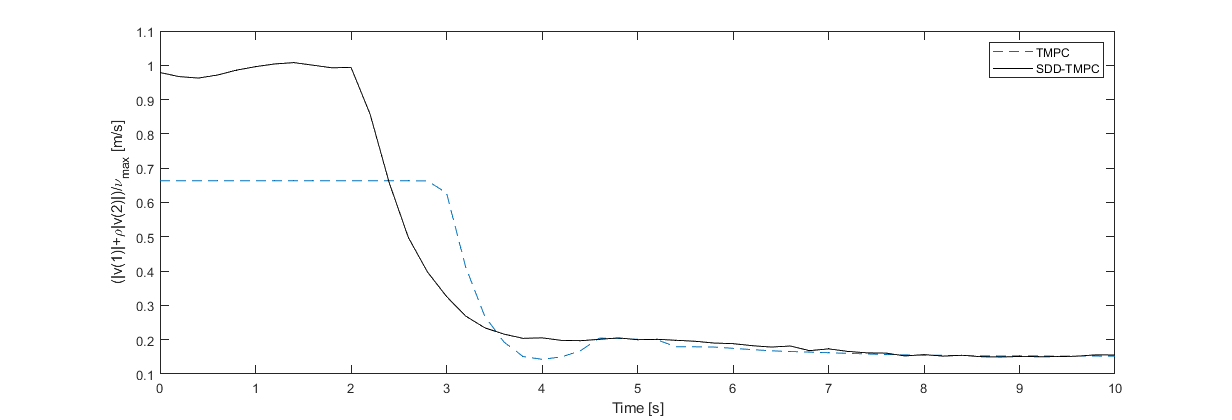}
    \caption{A nominal control input over time of \dtmpc\ and TMPC.}
    \label{fig:input_nominal}
\end{figure}

\begin{figure}
    \centering
    \includegraphics[width=0.95\textwidth]{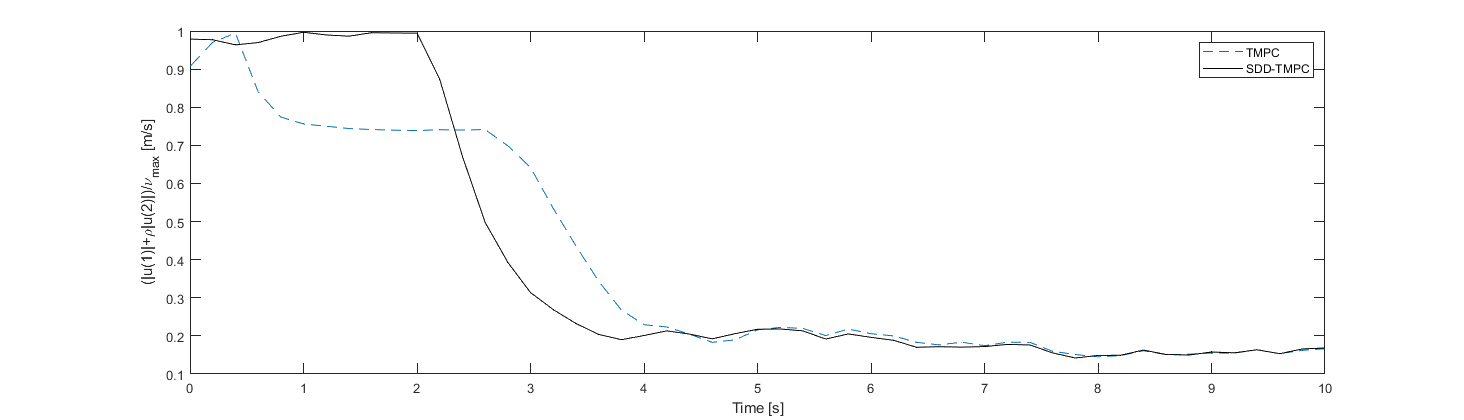}
    \caption{A control input over time of \dtmpc\ and TMPC. }
    \label{fig:input_real}
\end{figure}

}
\section{Conclusions and future work}
\label{sec:conclusion}

A dynamic version of tube model predictive control (TMPC), 
called state-dependent dynamic TMPC (\dtmpc), was proposed 
to deal with state-dependent disturbances,  
using fuzzy inference systems.
 {\dtmpc\ uses a FIS to describe how the boundaries of the disturbance change. It is possible to initialize the FIS with expert knowledge, allowing \dtmpc\ to be implemented when historical quantitative data is not available. 
To ensure robustness, such estimates should be conservative. According to our literature review, it should be possible to implement a reinforcement learning algorithm to reduce the conservativeness of this approach, but this would make it difficult to prove stability because theorem \ref{th:feasbile} would no longer hold. 

In the simulations, we used FIS tuned with offline data, which allowed us to prove the stability of \dtmpc\ under standard assumptions. In this paper, FIS was used to model a relatively simple model of disturbance to show the potential behavior of the controller. The next step could be using FIS to model more complex disturbances based on real-world data. In this paper, we used only the state data to estimate $\mathbb{W}$, but disturbances may depend on other factors. The input also affected the bounds of the disturbance in \cite{STRMPC_as_cone_program}. FIS should be extendable to encompass other factors such as inputs, time, or the state of the environment.}

A comparison among MPC, TMPC, and \dtmpc\ was performed via carefully designed simulations for the path planning of an autonomous robot. 
Compared to MPC, \dtmpc\ and TMPC show robustness to state-dependent disturbances, whereas 
\dtmpc\ compromises the optimality less than TMPC to achieve this robustness. 
Thus, \dtmpc\ reaches states that are inaccessible for TMPC and results in reduced mission time 
by realizing these states for the robot (e.g., higher velocity or positions that are closer to the obstacles).

{We also compared how to design a \dtmpc\ for a nonlinear problem. Our MPC was compared with a controller from a recent paper. Based on the simulations, our controller was able to achieve significantly better results as its input space was less restricted.

A main challenge of \dtmpc\ is the time that is required to solve the optimization problem online. This is due to the fact that the complexity of the optimization problem does not scale well with the number of states. Moreover, if polytopes are used to describe the error sets, the complexity of the optimization problem does not scale well with the size of the prediction horizon.} Future research includes improving the online computation time of \dtmpc\ by learning 
the nonlinear control policy using a neural network (see the following references for  MPC  
\cite{newTrainingMPCtoNN} and TMPC \cite{TMPCtoNN}),  
and by developing an efficient algorithm for the online tuning of fuzzy inference systems. 
An interesting topic for future work is to compare \dtmpc\ with a controller that includes 
the dynamic tube within the cost function, similarly to \cite{deepLearningTubes}. 
Finally, \dtmpc\ should be implemented and assessed via large-scale search-and-rescue scenarios.

\bibliography{sample}

\appendix

\section{Proof of directional error dynamics}\label{app1}

In order to obtain the error dynamics for the third dimension of the state space, from \eqref{eq:system2} we have:
    \begin{equation}\label{eq:error_derivative}
        \Dot{\bm{e}}_t[3]=\Dot{\bx}_t[3]-\Dot{\bz}_t[3]=\bu_t[2]-\bv_t[2]
    \end{equation}
which, together with \eqref{eq:ancilary}, and after applying the trigonometric identities \eqref{eq:trigonometric}
\begin{subequations}\label{eq:trigonometric}
    \begin{equation}
        \cos(\alpha+\beta)=\cos(\alpha)\cos(\beta)-\sin(\alpha)\sin(\beta)
    \end{equation}
    \begin{equation}
        \sin(\alpha+\beta)=\sin(\alpha)\cos(\beta)+\cos(\alpha)\sin(\beta)
    \end{equation}
\end{subequations}
as well as substituting $\bx_t[3]=\bz_t[3]+\be_t[3]$, results in: 
\begin{equation}\label{eq:error_derivative_2}
        \Dot{\bm{e}}_t[3]=-\frac{1}{\rho}\sin(\be_t[3])\bv_t[1]+(\cos(\be_t[3])-1)\bv_t[2]-\frac{K^\text{e}}{\rho}\sin(\bx_t[3])\bm{e}_t[1]-\frac{K^\text{e}}{\rho}\cos(\bx_t[3])\bm{e}_t[2]
\end{equation}

The equation \eqref{eq:error_derivative_2} can be transformed into \eqref{eq:error_derivative_simple} by treating the nonlinearity as a disturbance.

    \begin{equation}\label{eq:error_derivative_simple}
        \Dot{\bm{e}}_t[3]=-\frac{1}{\rho}\be_t[3]\bv_t[1]+\we_t
    \end{equation}
where $\we_t = \sum_{i=1}^4 \wpp{i}$, with 
$\wpp{1} = -\frac{1}{\rho}\left(\sin(\be_t[3]) - \be_t[3] \right)\bv_t[1]$ and 
$\wpp{2} = \left(\cos(\be_t[3]) - 1 \right)\bv_t[2]$ and 
$\wpp{3} = -\frac{K^\text{e}}{\rho} \sin(\bx_t[3])\be_t[1]$ and 
$\wpp{4} = -\frac{K^\text{e}}{\rho} \cos(\bx_t[3])\be_t[2]$.

\begin{subequations}  
For determining the bounds of the external disturbances, we find a lower bound and an upper bound per term $\wpp{i}$ for $i=1,2,3,4$. 
Since $\bm{e}_t[3]<\pi/4$, we can write:
    \begin{equation}\label{eq:bound_sin_pos}
    \begin{array}{l}
         w^{\textrm{e},1,\min}_t=-\frac{1}{\rho} \Big(\sin(\min(\mathbb{E}_t[3]))-\min(\mathbb{E}_t[3])\Big) \bv_t[1] \\
         w^{\textrm{e},1,\max}_t=-\frac{1}{\rho} \Big(\sin(\max(\mathbb{E}_t[3]))-\max(\mathbb{E}_t[3])\Big) \bv_t[1]
    \end{array}
    \qquad \text{ for }\bv_t[1]\geq0
    \end{equation}
    \begin{equation}\label{eq:bound_sin_neg}
       \begin{array}{l}
         w^{\textrm{e},1,\min}_t=-\frac{1}{\rho} \Big(\sin(\max(\mathbb{E}_t[3]))-\max(\mathbb{E}_t[3])\Big) \bv_t[1] \\
         w^{\textrm{e},1,\max}_t=-\frac{1}{\rho} \Big(\sin(\min(\mathbb{E}_t[3]))-\min(\mathbb{E}_t[3])\Big) \bv_t[1]
       \end{array}
    \qquad \text{ for }\bv_t[1]\leq0
    \end{equation}
We can bound $\wpp{2}$ by considering that $\cos(\be_t[3])-1$ is an even function with only one maximum and no minimum for the given range of $\be_t[3]$:
  \begin{equation}\label{eq:bound_cos}
    \begin{array}{l}
         w^{\textrm{e},2,\min}_t=\Big(\cos(\max|\mathbb{E}_t[3]|)-1 \Big)\max(\bv_t[2],0)\\
         w^{\textrm{e},2,\max}_t=\Big(\cos(\max|\mathbb{E}_t[3]|)-1\Big)\min(\bv_t[2],0)
    \end{array}
    \end{equation}
    Finally, for $\wpp{3}$  and $\wpp{4}$ we can write: 
\begin{equation}\label{eq:bound_x}
    \begin{array}{l}
         w^{\textrm{e},3,\min}_t=\displaystyle\min_{\left(\be_t[1], \be_t[3]\right)\in \mathbb{E}_t[1]\times\mathbb{E}_t[3] }\Big(-\frac{K^\text{e}}{\rho}\sin(\bz_t[3]+\be_t[3])\bm{e}_t[1]\Big) \\
          w^{\textrm{e},3,\max}_t=\displaystyle\max_{\left(\be_t[1], \be_t[3]\right)\in \mathbb{E}_t[1]\times\mathbb{E}_t[3] } \Big(-\frac{K^\text{e}}{\rho}\sin(\bz_t[3]+\be_t[3])\bm{e}_t[1]\Big)
    \end{array}
    \end{equation}
    \begin{equation}\label{eq:bound_y}
    \begin{array}{l}
         w^{\textrm{e},4,\min}_t=\displaystyle\min_{\left(\be_t[2],\be_t[3]\right) \in 
         \mathbb{E}_t[2]\times\mathbb{E}_t[3]} \Big(-\frac{K^\text{e}}{\rho}\cos(\bz_t[3]+\be_t[3])\bm{e}_t[2] \Big) \\
          w^{\textrm{e},4,\max}_t=\displaystyle\max_{\left(\be_t[2],\be_t[3]\right)\in \mathbb{E}_t[2]\times\mathbb{E}_t[3]}
          \Big(-\frac{K^\text{e}}{\rho}\cos(\bz_t[3]+\be_t[3])\bm{e}_t[2]\Big)
    \end{array}
    \end{equation}
Therefore, the admissible set of the external disturbances for \eqref{eq:error_derivative_simple} is  defined via:
    \begin{equation}\label{eq:bound_set}
        \mathbb{W}_t[3]:= \Big\{  \we_t \in\mathbb{R}\Big|\sum_{i=1}^4 w^{\textrm{e},i,\min}_t \leq \we_t \leq \sum_{i=1}^4 w^{\textrm{e},i,\min}_t \Big\}
    \end{equation}
\end{subequations}
For the discrete-time framework of the problem, \eqref{eq:bound_set} is estimated for the discrete time steps.%

\section{Proof of input constraint tightening}\label{app2}

Considering the ancillary control law that is formulated by \eqref{eq:ancilary}, imposing the hard constraint $\bu_t \in \mathbb{U}$, 
using the equality $\bx_t[3] = \bz_t [3] + \be_t[3]$, multiplying both sides of the hard constraints by $\begin{bmatrix}
            \cos(\be_t[3]) &-\rho\sin(\be_t[3])\\ \frac{1}{\rho}\sin (\be_t[3]) &\cos(\be_t[3]) 
        \end{bmatrix}$ from the left, and considering all possible combinations of $\be_t[1]$, $\be_t[2]$, and $\be_t[3]$,  
the following condition is obtained:  
    \begin{equation}\label{eq:v_U}
        \{\bv_t\}\oplus\begin{bmatrix}
            -\cos(\bz_t[3])& -\sin(\bz_t[3])\\ \frac{1}{\rho}\sin(\bz_t[3])& -\frac{1}{\rho}\cos(\bz_t[3])
        \end{bmatrix} K^\text{e} (\mathbb{E}_t[1]\times\mathbb{E}_t[2])\subseteq
        \bigcap_{\be\in\mathbb{E}_t}\left(\begin{bmatrix}
            \cos (\be_t[3])&-\rho\sin(\be_t[3])\\ \frac{1}{\rho}\sin (\be_t[3])&\cos(\be_t[3])
        \end{bmatrix}\mathbb{U} \right)=\mathbb{V}^\text{NL}(\mathbb{E}_t[3])
    \end{equation}
Note that multiplication of a matrix by a set that contains vector elements means that the mapping 
corresponding to that matrix is implemented on each vector element of the matrix. 
On the left-hand side of \eqref{eq:v_U}, the Minkowski difference of the nominal control input and 
a linear transformation of the error set is given, whereas on the right-hand side a nonlinear transformation 
of the admissible set of control inputs, i.e., $\mathbb{V}^\text{NL}(\mathbb{E}_t[3])$, should be computed. 

Picking an arbitrary element of $\mathbb{U}$, e.g., $[u[1],u[2]]^T$, after the mapping 
$\begin{bmatrix} \cos (\be_t[3])&-\rho\sin(\be_t[3])\\ \frac{1}{\rho}\sin (\be_t[3])&\cos(\be_t[3]) \end{bmatrix}$ 
is performed on this vector, we obtain a new vector $\left[\cos(\be_t[3])u[1] - \rho \sin(\be_t[3])u[2] 
, \frac{1}{\rho} \sin(\be_t[3])u[1] + \cos(\be_t[3]) u[2] \right]^T$, 
which belongs to the admissible set $\mathbb{U}$  of inputs 
in case based on \eqref{eq:input_cons}, it satisfies the following condition: 
\[
\frac{\Big|\cos(\be_t[3])u[1] - \rho \sin(\be_t[3])u[2]\Big|}{\nu\maxu } 
+ \frac{\Big|\sin(\be_t[3])u[1] + \rho \cos(\be_t[3]) u[2] \Big|} {\nu\maxu } \leq 1
\]
Therefore, in general for each $\be_t[3] \in \mathbb{E}_t[3]$ the right-hand side term of \eqref{eq:v_U}, i.e.,  
$\mathbb{V}^\text{NL}(\be_t[3])$, can be defined  by the following quadrangle:  
    \begin{equation}
       \begin{bmatrix}\label{eq:linear_transformation}
            \frac{\cos(\be_t[3]) + \sin(\be_t[3])}{\nu\maxu}&\frac{-\rho\sin(\be_t[3]) + \rho\cos(\be_t[3])}{\nu\maxu}\\
            \frac{-\cos(\be_t[3]) + \sin(\be_t[3])}{\nu\maxu}&\frac{\rho\sin(\be_t[3])+\rho\cos(\be_t[3])}{\nu\maxu}\\
            \frac{-\cos(\be_t[3])-\sin(\be_t[3])}{\nu\maxu}&\frac{\rho\sin(\be_t[3])-\rho\cos(\be_t[3])}{\nu\maxu}\\
            \frac{\cos(\be_t[3])-\sin(\be_t[3])}{\nu\maxu}& \frac{-\rho\sin(\be_t[3])-\rho\cos(\be_t[3])}{\nu\maxu}\\
        \end{bmatrix}\bu_t\leq\begin{bmatrix}
            1\\1\\1\\1
        \end{bmatrix}
    \end{equation}

To determine a subset of $\mathbb{V}^\text{NL}(\be_t[3])$ with the same shape as $\mathbb{U}$, we first put $\bu_t[1]=0$, which based on \eqref{eq:linear_transformation} results in:  
    \begin{equation}\label{eq:v10}
        \begin{bmatrix}
            \bu_t[2]\\\bu_t[2]\\-\bu_t[2]\\-\bu_t[2]
        \end{bmatrix}\leq
        \begin{bmatrix}
            \frac{\nu\maxu}{\rho(\cos(\be_t[3])-\sin(\be_t[3]))}\\\frac{\nu\maxu}{\rho(\cos(\be_t[3])+\sin(\be_t[3]))}\\
             \frac{\nu\maxu}{\rho(\cos(\be_t[3])-\sin(\be_t[3]))}\\\frac{\nu\maxu}{\rho(\cos(\be_t[3])+\sin(\be_t[3]))}
        \end{bmatrix} 	
    \end{equation}
Since $|\be_t[3]|<\pi/4$, both `$\cos(\be_t[3])-\sin(\be_t[3])$' and `$\cos(\be_t[3])+\sin(\be_t[3])$' 
result in positive values. Thus, \eqref{eq:v10} can be reformulated via:  
    \begin{equation}\label{eq:v10_simple}
        \left|\bu_t[2]\right|\leq \frac{\nu\maxu}{\rho}\lambda\left( \be_t[3] \right)
    \end{equation}
    where we define $\lambda(\cdot)$ via: 
    \begin{equation}\label{eq:lambda_def}
        \lambda(\cdot) = \min \left(\frac{1}{\cos(\cdot)-\sin(\cdot)},\frac{1}{\cos(\cdot)+\sin(\cdot)}\right)=\left(\frac{1}{\cos(\cdot)+|\sin(\cdot)|}\right)
    \end{equation}
        Similarly, if we put $\bu_t[2]=0$, from \eqref{eq:linear_transformation} we obtain:  
    \begin{equation}\label{eq:u20}
        \begin{bmatrix}
            \bu_t[1]\\\bu_t[1]\\-\bu_t[1]\\-\bu_t[1]
        \end{bmatrix}\leq
        \begin{bmatrix}
            \frac{\nu\maxu}{\cos(\be_t[3])-\sin(\be_t[3])}\\
            \frac{\nu\maxu}{\cos(\be_t[3])+\sin(\be_t[3])}\\
             \frac{\nu\maxu}{\cos(\be_t[3])-\sin(\be_t[3])}\\
             \frac{\nu\maxu}{\cos(\be_t[3])+\sin(\be_t[3])}
        \end{bmatrix} 	
    \end{equation}
    which is equivalent to the following equation:
    \begin{equation}\label{eq:u20_simple}
        \left|\bu_t[1]\right|\leq \nu\maxu \lambda\left( \be_t[3] \right)
    \end{equation}
From \eqref{eq:v10_simple} and \eqref{eq:u20_simple}, vectors $\lambda(\be_t[3]) \left[0,\frac{\nu\maxu}{\rho}\right]^T$, 
$\lambda(\be_t[3])\left[0,-\frac{\nu\maxu}{\rho}\right]^T$, $\lambda(\be_t[3])\left[\nu\maxu,0\right]^T$, and $\lambda(\be_t[3])\left[-\nu\maxu,0\right]^T$ are obtained as the corners of the polytope that represents the largest subset of $\mathbb{V}^\text{NL}(\be_t[3])$ 
(see, e.g., the blue foreground quadrangle in Figure~\ref{fig:Uset}).
All other points of this polytope are obtained by a convex combination of these vectors. 
In this case, the original set $\mathbb{U}$ is simplified to a polytope with it corners given by $[\nu\maxu,0]^T$, 
$\left[0,-\frac{\nu\maxu}{\rho}\right]^T$, $[-\nu\maxu,0]$, and $\left[0,\frac{\nu\maxu}{\rho}\right]^T$, which satisfies 
$ \lambda(\be_t[3])\mathbb{U}\subseteq\mathbb{V}^\text{NL}(\be_t[3])$.

From the expression of $\lambda(\cdot)$ given by \eqref{eq:lambda_def} and as is shown in Figure~\ref{fig:lambda}, for 
negative values of $\be_t[3]$, $\lambda(\cdot)$ shows an ascending behavior, where the maximum of $\lambda(\cdot)$ is unity, which occurs for $\be_t[3]=0$. For positive values of $\be_t[3]$, $\lambda(\cdot)$ shows a descending behavior. 
Moreover, $\lambda(\cdot)$ is an even function, which implies that its representative curve is symmetric with 
respect to the vertical axis (see Figure~\ref{fig:lambda}). 
Thus, for tightening the constraints online, from \eqref{eq:v_U}, 
we define the admissible set $\mathbb{V}\left( \mathbb{E}_t \right)$ for the nominal control inputs as a function of the error set via:
    \begin{equation}\label{eq:lambda_U}
        \mathbb{V}\left( \mathbb{E}_t\right):=\min_{\be_t[3]\in\mathbb{E}_t[3]} \left(\lambda(\be_t[3]\right)\mathbb{U}\ominus
        \begin{bmatrix}
            -\cos(\bz_t[3])& -\sin(\bz_t[3])\\ \frac{1}{\rho}\sin(\bz_t[3])& -\frac{1}{\rho}\cos(\bz_t[3])
        \end{bmatrix} K^\text{e} (\mathbb{E}_t[1]\times\mathbb{E}_t[2])
    \end{equation}

\end{document}